\documentclass[a4paper,11pt]{scrarticle}
\usepackage[a4paper]{geometry}
\geometry{verbose, a4paper, tmargin=34mm, bmargin=38mm, lmargin=29mm, rmargin=29mm}
\usepackage[utf8]{inputenc}

\usepackage[titletoc]{appendix}

\usepackage[intlimits]{amsmath}
\numberwithin{equation}{section}
\usepackage{amsfonts,amssymb}
\usepackage{xpatch,amsthm}
\usepackage{graphicx}
\usepackage{subcaption}

\theoremstyle{plain}
\newtheorem{theorem}{Theorem}\numberwithin{theorem}{section}
\newtheorem{lemma}[theorem]{Lemma}
\newtheorem{proposition}[theorem]{Proposition}
\newtheorem{corollary}[theorem]{Corollary}

\theoremstyle{definition}
\newtheorem{remark}[theorem]{Remark}
\newtheorem{definition}[theorem]{Definition}
\newtheorem{assumption}[theorem]{Assumption}

\newcommand{\R}{\mathbb{R}}
\newcommand{\N}{\mathbb{N}}
\newcommand*\di{\mathop{}\!\mathrm{d}}

\DeclareMathOperator{\diam}{diam}
\DeclareMathOperator{\tr}{tr}
\DeclareMathOperator{\supp}{supp}

\DeclareMathOperator{\sgn}{sgn}


\newcommand{\dual}[2]{\langle #1 , #2 \rangle}
\newcommand{\MM}{\mathcal{M}}
\newcommand{\KK}{\mathcal{K}}
\newcommand{\CC}{\mathcal{C}}
\newcommand{\NN}{\mathcal{N}}

\title{A Priori Error Analysis for an Optimal Control Problem Governed by a Variational Inequality of the Second Kind}

\author{C. Meyer\thanks{Faculty of Mathematics, Technische Universit\"{a}t Dortmund, 44227 Dortmund, Germany, e-mail:
\texttt{cmeyer@math.tu-dortmund.de}}
\and M. Weymuth\thanks{Institute for Mathematics and Computer-Based Simulation, Universit\"{a}t der Bundeswehr M\"{u}nchen, 85577 Neubiberg, Germany, e-mail:
\texttt{monika.weymuth@unibw.de}}}

\begin{document}

\maketitle

\begin{abstract}We consider an optimal control problem governed by an elliptic variational inequality of the second kind. The problem is discretized by linear finite elements for the state and a variational discrete approach for the control. 
Based on a quadratic growth condition we derive nearly optimal a priori error estimates.
Moreover, we establish second order sufficient optimality conditions that ensure a quadratic growth condition. 
These conditions are rather restrictive, but allow us to construct a one-dimensional locally optimal solution with 
reduced regularity, which serves as an exact solution for numerical experiments.\end{abstract}

\textbf{Keywords:} optimal control, elliptic variational inequalities of the second kind, finite elements, a priori error analysis\\

\textbf{Mathematics Subject Classification (2010):} 49M25, 65G99, 65K15, 65N30

\section{Introduction}
This paper is concerned with the following optimal control problem governed by a variational inequality of the second kind:

\begin{equation}\label{prob_P}\tag{P}
	\left.\begin{aligned}
	&\min_{(y, u) \in H_0^1(\Omega)\times L^2(\Omega)} J(y,u):= \frac{1}{2}\|y-y_d\|_{L^2(\Omega)}^2
	+\frac{\nu}{2}\|u  - u_d \|_{L^2(\Omega)}^2, \\
	&\text{s.t.}\quad \int_\Omega \nabla y \cdot \nabla (v-y)\ \mathrm dx+\|v\|_{L^1(\Omega)}-\|y\|_{L^1(\Omega)}\geq \langle u, v-y\rangle\quad \forall v\in H_0^1(\Omega) 
	\end{aligned}
	\right\}
\end{equation}

The precise assumptions on the quantities in \eqref{prob_P} will be given in Section \ref{sec:Preliminaries}.\\
Optimal control problems governed by variational inequalities play an important role in many applications such as problems in contact mechanics, phase separation or elastoplasticity. Therefore, these problems have been intensively investigated in the last years. Special techniques have been introduced for the characterization of local optima and numerical methods have been developed for various applications. However, discretization error estimates are rarely examined in the literature.\\
Our aim is to derive optimal a priori error estimates for the finite element discretization of \eqref{prob_P}. The field of a priori error analysis for optimal control problems governed by PDEs is often adressed in the literature. We refer e.g. to \cite{AradaCasasTroeltzsch2002, CasasTroeltzsch2002, DeckelnickHinze2007, Meyer2008, Roesch2004} and the references therein. However, to the authors' best knowledge, there is only one paper \cite{MeyerThoma2013} deriving quantitative error estimates for a finite element discretization of an optimal control problem governed by a variational inequality, namely in the special case of the obstacle problem.\\
The error analysis of optimal control problems governed by variational inequalities is
challenging since it needs a combination of optimal control theory and a priori finite element error analysis for variational inequalities. While error estimates  in the energy norm are well-known in the literature (see e.g. \cite{Falk1974}, \cite[Section~11]{AtkinsonHan2009}) there are still remaining questions concerning the behavior of the error in lower $L^p$-norms. For the error analysis of the problem class under consideration especially $L^2$-error estimates are of interest. However, an adaptation of the well-known Aubin-Nitsche trick to our variational inequality seems to be impossible in general due to a lack of regularity of the dual problem, see \cite{Natterer1976} and \cite{ChristofMeyer2018_2} for a discussion of the obstacle problem. A remedy to circumvent this difficulty is to employ $L^\infty$-error estimates 
 following the technique introduced in \cite{Nochetto1988}.\\
Another difficulty in the investigation of these optimal control problems is the fact that the control-to-state operator is in general  not G\^{a}teaux differentiable. 
    Therefore, the standard procedure for the derivation of necessary and sufficient optimality conditions based 
    on the adjoint calculus is not readily applicable and modified stationarity concepts 
    such as Clarke-, Bouligand-, Mordukhovich or strong stationarity have been introduced. 
Among these concepts the strong stationarity is the most rigorous one. It was first discussed by Mignot and Puel \cite{MignotPuel1984} for the obstacle problem and was further developed and an\-a\-lyzed for different kind of problems e.g. in \cite{HintermuellerSurowiec2011, HerzogMeyerWachsmuth2013, Wachsmuth2014, livia}. Moreover, based on strong stationarity second order sufficient conditions can be established which ensure a quadratic growth condition, see \cite{KunischWachsmuth2012, BetzMeyer2015}. 
    For variational inequalities of the second kind strong stationarity conditions are established in \cite{DelosReyesMeyer2016, christof, CMST2020}. 
    Second order sufficient conditions ensuring local optimality are however not yet known for this class of problems.\\
Under the assumption that the quadratic growth condition holds, we establish nearly optimal a priori error estimates for the control. An important ingredient for our error analysis is the $L^\infty$-error estimate for the state which is derived in the appendix
    following the lines of \cite{Nochetto1988}. 
As main result we obtain up to a logarithmic term a linear convergence rate for the $L^2$-norm of the control. This result is in agreement with the result of \cite{MeyerThoma2013} for the obstacle problem.
    In order to construct an example with an exact (local) solution fulfilling the growth condition required for our error analysis, 
    we derive fairly restrictive sufficient conditions ensuring a quadratic growth. Nevertheless, 
    these conditions are still weak enough to enable the construction of an exact solution, which only provides 
   a reduced regularity of the adjoint state.\\
The paper is structured as follows: In Section \ref{sec:Preliminaries} we clarify the notation as well as the precise assumptions on the quantities in \eqref{prob_P}. We also state some well-known results concerning the existence, uniqueness and regularity of solutions to \eqref{prob_P}.  Section \ref{sec:Discretization} is devoted to the finite element discretization of \eqref{prob_P}. The error analysis is presented in Section \ref{sec:error_analysis}. In Section \ref{sec:NSOC} necessary and sufficient optimality conditions are discussed. Finally in Section \ref{sec:1D_Ex} we construct a one-dimensional example with 
 reduced regularity and validate our theoretical results by numerical experiments in Section \ref{sec:numerics}. \ref{sec:Appendix} contains a regularization procedure for the variational inequality. Based on a technique introduced in \cite{Nochetto1988} we derive $L^\infty$-error estimates for the state which are an important ingredient for the proof of our convergence rates. Moreover, in \ref{sec:Reg_OCP} a regularized optimal control problem and its discretization are investigated.

\section{Preliminaries}\label{sec:Preliminaries}
\subsection{Notation and Problem Statement}

Throughout this work we use the standard notation $H_0^1(\Omega)$ and $W^{k,p}(\Omega)$, $k\in\N$, $1\leq p\leq \infty$ for the Sobolev spaces on a domain $\Omega\subset \R^d$, $d\geq 1$. We refer to \cite{Adams2003} for details of these spaces. As usual the dual of $H_0^1(\Omega)$ w.r.t. the $L^2$-inner product is denoted by $H^{-1}(\Omega)$ and the symbol $\langle \cdot, \cdot\rangle$ denotes the dual pairing between $H_0^1(\Omega)$ and $H^{-1}(\Omega)$. The $L^2$-scalar product on $\Omega$ is denoted by $(\cdot,\cdot)$.\\
$C>0$ denotes a constant which may vary at different occurences but is always independent of the relevant parameters such as mesh size $h$ or regularization parameter $\gamma$.\\
Moreover, we introduce the bilinear form $a: H_0^1(\Omega)\times H_0^1(\Omega)\to \R$ by
\begin{equation*}
a(y,v):=(\nabla y,\nabla v).
\end{equation*}

The coercivity constant of $a$ will be denoted by $c$, i.e.
\begin{equation}\label{coercivity}
a(v,v)\geq c\|v\|_{H^1(\Omega)}^2\quad \forall v\in H_0^1(\Omega).
\end{equation}

We impose the following assumptions on the data in \eqref{prob_P}:
\begin{itemize}
\item[i)] $\Omega\subset \R^d$ ($d=1,2,3$) is either a bounded interval or a polygonal/polyhedral bounded domain which is $W^{2,p}$-regular for $p<\infty$ if $d=2$ or $p\leq 6$ if $d=3$.
\item[ii)] The desired state satisfies $y_d\in L^2(\Omega)$ and $\nu>0$ is a fixed real number.
\item[iii)] The desired control $u_d$ is a function in $L^\infty(\Omega)$.
\end{itemize}

\begin{remark}
We call the domain $\Omega$ to be $W^{2,p}$-regular if the solution of the Laplace equation $-\Delta w=f$ with Dirichlet boundary condition satisfies $w\in W^{2,p}(\Omega)$ provided that $f\in L^p(\Omega)$. For $d=2$ the regularity assumption i) holds if the largest interior angle is less than or equal to $\pi/2$ (cf.  e.g. \cite[Section~2.7]{Grisvard1992}).\\
 If $d=3$ the situation gets more complicated. In this case we have two types of singularities, those at the edges and those at the vertices. In general the edge singularities are nastier than the vertex singularities. For a detailed discussion of edge and vertex singularities we refer to \cite{Grisvard1992, KozlovMazyaRossmann1997, KozlovMazyaRossmann2001, Mazya2010}.\\
Assumption i) is e.g. satisfied if $\theta<3\pi/2$ and $\Lambda^+>3/2$ (see \cite[Theorem~4.3.2]{Mazya2010}). Here $\theta:=\max_k\theta_k$, where $\theta_k$ denotes the interior dihedral angle at the edge $e_k$, and $\Lambda^+:=\min_k\Lambda_k^+$, where $\Lambda_k^+$ denotes the smallest positive eigenvalue of the Laplace Beltrami operator on the intersection of $\Omega$ and the unit sphere centered at the vertex $v_k$.
\end{remark}

\subsection{Known Results}
In the following we summarize some known results about the variational inequality and the optimal control problem \eqref{prob_P}. We start with an existence and uniqueness result.

\begin{lemma}\label{lemma_Lipschitz}
For every $u\in H^{-1}(\Omega)$ the variational inequality
\begin{equation}\label{VI}
\quad a(y,v-y)+\|v\|_{L^1(\Omega)}-\|y\|_{L^1(\Omega)}\geq \langle u, v-y\rangle\quad \forall v\in H_0^1(\Omega)
\end{equation}
has a unique solution $y\in H_0^1(\Omega)$. \\
Moreover, the associated solution operator $S: H^{-1}(\Omega)\to H_0^1(\Omega)$ mapping $u$ to $y$ is globally Lipschitz continuous with Lipschitz constant $L=1/c$ with $c$ as in \eqref{coercivity}.
\end{lemma}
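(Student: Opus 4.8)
The plan is to treat \eqref{VI} as the first-order optimality condition of a convex minimization problem and then to obtain the Lipschitz estimate by a standard subtraction argument. For existence and uniqueness I would introduce the energy functional
\[
F(y) := \tfrac{1}{2}a(y,y) + \|y\|_{L^1(\Omega)} - \langle u, y\rangle, \qquad y \in H_0^1(\Omega).
\]
Since $\Omega$ is bounded we have the embeddings $H_0^1(\Omega) \hookrightarrow L^2(\Omega) \hookrightarrow L^1(\Omega)$, so that $\phi(y) := \|y\|_{L^1(\Omega)}$ is finite, convex, and continuous on $H_0^1(\Omega)$. By the coercivity \eqref{coercivity}, the quadratic part $\tfrac12 a(y,y)$ is strictly convex and coercive, while $\phi \geq 0$ and the linear term is controlled through $\langle u, y\rangle \leq \|u\|_{H^{-1}(\Omega)}\|y\|_{H^1(\Omega)}$. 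Hence $F$ is strictly convex, weakly lower semicontinuous, and coercive on $H_0^1(\Omega)$, and the direct method of the calculus of variations furnishes a unique minimizer $y \in H_0^1(\Omega)$.

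It then remains to identify this minimizer with the solution of \eqref{VI}. Writing $F$ as the sum of the G\^{a}teaux-differentiable convex map $y \mapsto \tfrac12 a(y,y) - \langle u,y\rangle$ and the convex continuous functional $\phi$, minimality gives $F(y) \leq F(y + t(v-y))$ for every $v \in H_0^1(\Omega)$ and $t \in (0,1]$. Exploiting the convexity bound $\phi(y + t(v-y)) \leq (1-t)\phi(y) + t\,\phi(v)$ for the non-smooth term and passing to the limit $t \downarrow 0$ for the differentiable part yields
\[
a(y, v-y) - \langle u, v-y\rangle + \|v\|_{L^1(\Omega)} - \|y\|_{L^1(\Omega)} \geq 0 \quad \forall v \in H_0^1(\Omega),
\]
which is precisely \eqref{VI}; conversely every solution of \eqref{VI} minimizes the convex functional $F$, so the two problems are equivalent. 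Alternatively, one may simply invoke the classical existence and uniqueness theorem for variational inequalities of the second kind with a coercive, continuous bilinear form and a proper, convex, lower semicontinuous functional.

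For the Lipschitz estimate, let $u_1, u_2 \in H^{-1}(\Omega)$ and set $y_i := S(u_i)$. Testing the inequality for $y_1$ with $v = y_2$ and the inequality for $y_2$ with $v = y_1$ and adding the two, the $L^1$-terms cancel and one is left with
\[
a(y_1 - y_2,\, y_1 - y_2) \leq \langle u_1 - u_2,\, y_1 - y_2\rangle.
\]
Bounding the left-hand side below by \eqref{coercivity} and the right-hand side above by the duality estimate $\langle u_1 - u_2, y_1 - y_2\rangle \leq \|u_1 - u_2\|_{H^{-1}(\Omega)}\|y_1 - y_2\|_{H^1(\Omega)}$, and dividing by $\|y_1 - y_2\|_{H^1(\Omega)}$, gives $\|y_1 - y_2\|_{H^1(\Omega)} \leq c^{-1}\|u_1 - u_2\|_{H^{-1}(\Omega)}$, i.e. Lipschitz continuity of $S$ with $L = 1/c$. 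The only genuinely delicate point is the rigorous passage from the minimization problem to \eqref{VI} through the non-smooth functional $\phi$; the Lipschitz step, which hinges on the cancellation of the $L^1$-contributions, is then entirely elementary.
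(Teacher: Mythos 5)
Your proof is correct. The paper itself does not prove this lemma at all --- it simply states that ``the proof is standard and can be found e.g.\ in \cite{Barbu1984}'' --- and your argument is precisely that standard proof: existence and uniqueness via the direct method applied to the strictly convex, coercive, weakly lower semicontinuous energy $F$, the equivalence of the minimization problem with \eqref{VI} handled carefully through the convexity of the nonsmooth term $\|\cdot\|_{L^1(\Omega)}$, and the Lipschitz bound $L=1/c$ from testing the two inequalities against each other so that the $L^1$-terms cancel and coercivity \eqref{coercivity} does the rest. In effect you have filled in the details the paper delegates to the literature, and all three steps (coercivity of $F$, the limit $t\downarrow 0$ in the convexity inequality, and the subtraction argument) are carried out correctly.
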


The proof is standard and can be found e.g. in \cite{Barbu1984}.\\

Next we state an equivalent reformulation of \eqref{VI} by means of a complementarity-like system. For a  proof we refer to \cite{DelosReyes2011}.
\begin{lemma}\label{lem:slack}
A function $y\in H_0^1(\Omega)$ solves \eqref{VI}, iff there exists a $q\in L^\infty(\Omega)$ such that
\begin{gather}
a(y, v) + (q, v) =\langle u, v\rangle\quad \forall v \in H_0^{1}(\Omega) \label{eq:qpde}\\
q(x)y(x) =|y(x)|,\quad |q(x)|\leq 1\quad \text{a.e. in}\ \Omega.
\end{gather}
Hence, if $u\in L^p(\Omega)$, $p\in(1,\infty)$, then $y\in W^{2,p}(\Omega)$.
\end{lemma}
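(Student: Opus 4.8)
The plan is to prove the two implications separately and then obtain the regularity as an immediate corollary of the representation $-\Delta y = u - q$. Throughout I write $A\colon H_0^1(\Omega)\to H^{-1}(\Omega)$ for the operator induced by $a$ and $\xi\in H^{-1}(\Omega)$ for the functional $\langle\xi,w\rangle := \langle u,w\rangle - a(y,w)$, i.e. $\xi = u-Ay$.

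The implication ``$\Leftarrow$'' is a direct computation. Assume a $q$ with the stated properties exists. Testing \eqref{eq:qpde} with $w = v - y$ gives $\langle u, v - y\rangle - a(y, v - y) = (q, v - y)$, so that \eqref{VI} is equivalent to $(q, v - y) \le \|v\|_{L^1(\Omega)} - \|y\|_{L^1(\Omega)}$. Here $(q,y) = \int_\Omega qy\,\di x = \int_\Omega |y|\,\di x = \|y\|_{L^1(\Omega)}$ by $q(x)y(x) = |y(x)|$, while $|q(x)| \le 1$ yields $(q,v) = \int_\Omega qv\,\di x \le \int_\Omega |v|\,\di x = \|v\|_{L^1(\Omega)}$; subtracting proves the inequality and hence \eqref{VI}.

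The converse ``$\Rightarrow$'' is the substantial part, where one must \emph{construct} $q$. Inserting $v = y \pm w$ into \eqref{VI} for arbitrary $w \in H_0^1(\Omega)$ gives $\pm\langle\xi, w\rangle \le \|w\|_{L^1(\Omega)}$, hence $|\langle\xi, w\rangle| \le \|w\|_{L^1(\Omega)}$. Since $C_c^\infty(\Omega) \subseteq H_0^1(\Omega)$ is dense in $L^1(\Omega)$ and $\Omega$ is bounded so that $(L^1(\Omega))^* = L^\infty(\Omega)$, the functional $\xi$ extends uniquely to a bounded linear functional on $L^1(\Omega)$ and is therefore represented by some $q \in L^\infty(\Omega)$ with $\|q\|_{L^\infty(\Omega)} \le 1$ and $\langle\xi, w\rangle = (q, w)$ for all $w \in H_0^1(\Omega)$; unwinding the definition of $\xi$ is precisely \eqref{eq:qpde}. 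I expect this upgrade from the a priori $H^{-1}$-regularity of $u - Ay$ to an $L^\infty$-bound -- made possible by the $L^1$-structure of the nonsmooth term together with the duality $(L^1)^* = L^\infty$ -- to be the main obstacle; the rest is bookkeeping. An equivalent route would be to read \eqref{VI} as the optimality condition $0 \in \partial(\tfrac{1}{2}a(\cdot,\cdot) - \langle u, \cdot\rangle + \|\cdot\|_{L^1(\Omega)})(y)$ and to identify $\partial\|\cdot\|_{L^1(\Omega)}(y)$ pointwise, or to pass to the limit in a smoothed problem as in \ref{sec:Appendix}.

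For the complementarity I would test \eqref{VI} with $v = 0$ and with $v = 2y$, obtaining $(q, y) \ge \|y\|_{L^1(\Omega)}$ and $(q, y) \le \|y\|_{L^1(\Omega)}$, respectively; hence $\int_\Omega (|y| - qy)\,\di x = 0$. As the integrand is pointwise nonnegative because $q(x)y(x) \le |q(x)|\,|y(x)| \le |y(x)|$, this forces $q(x)y(x) = |y(x)|$ a.e., which together with $|q|\le 1$ completes the equivalence. Finally, reading \eqref{eq:qpde} as $-\Delta y = u - q$ weakly and noting that $u \in L^p(\Omega)$ and $q \in L^\infty(\Omega) \subseteq L^p(\Omega)$ give $u - q \in L^p(\Omega)$, the $W^{2,p}$-regularity of $\Omega$ assumed in i) yields $y \in W^{2,p}(\Omega)$ for $p \in (1,\infty)$.
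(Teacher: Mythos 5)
Your proposal is correct and complete. Note, however, that the paper does not prove Lemma \ref{lem:slack} at all: it defers to the reference \cite{DelosReyes2011}, so there is no in-paper argument to compare against. Your proof is the standard self-contained one, and all three pieces are sound: the ``$\Leftarrow$'' direction by substituting \eqref{eq:qpde} into \eqref{VI} and using $|q|\leq 1$, $qy=|y|$; the ``$\Rightarrow$'' direction by observing that $v=y\pm w$ yields $|\langle u,w\rangle - a(y,w)|\leq \|w\|_{L^1(\Omega)}$ (via the triangle inequality $\|y\pm w\|_{L^1}-\|y\|_{L^1}\leq\|w\|_{L^1}$, which you use implicitly and should state), so that the functional $u-Ay$ extends by density of $H_0^1(\Omega)$ in $L^1(\Omega)$ to an element of $(L^1(\Omega))^*=L^\infty(\Omega)$ of norm at most one; and the complementarity by testing with $v=0$ and $v=2y$ to force $\int_\Omega(|y|-qy)\,\di x=0$ with a pointwise nonnegative integrand. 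This is precisely the identification of the subdifferential $\partial\|\cdot\|_{L^1(\Omega)}$ restricted to $H_0^1(\Omega)$, which is the route taken in the cited literature as well, so nothing essentially different is happening — you have simply made explicit what the paper outsources. One caveat you inherit from the paper's own statement: the final regularity claim for all $p\in(1,\infty)$ rests on the $W^{2,p}$-regularity of $\Omega$ assumed in i), which for $d=3$ is only guaranteed for $p\leq 6$; your proof correctly exposes this dependence, but the stated range of $p$ should be read subject to that restriction.
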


\begin{definition}\label{def:Q}
In view of \eqref{eq:qpde}, the slack variable $q$ is unique and depends Lipschitz continuously on $u$, 
when considered as an element of $H^{-1}(\Omega)$.
Thus, similarly to the solution operator $S$, we can introduce a globally Lipschitz continuous mapping 
$Q: H^{-1}(\Omega) \ni u \mapsto q\in H^{-1}(\Omega)$.
\end{definition}

\begin{remark}\label{rem_regularity}
Since in our optimal control problem \eqref{prob_P} the control satisfies $u\in L^2(\Omega)$, the minimal regularity of the solution of \eqref{VI} is $y\in H^2(\Omega)$. Moreover, due to the continuous embedding $H^2(\Omega)\hookrightarrow L^p(\Omega)$ for all $p\geq 2$ we have $y\in L^\infty(\Omega)$.
\end{remark}

Due to the continuity of $S$ and the weak lower semicontinuity of $J$ we have the following result which can be found e.g. in \cite{Barbu1984}: 
\begin{proposition}
There exists a globally optimal solution of \eqref{prob_P} which is in general not unique due to the nonlinearity of $S$.
\end{proposition}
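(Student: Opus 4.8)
The plan is to apply the direct method of the calculus of variations to the reduced problem. First I would eliminate the state via the solution operator $S$ from Lemma \ref{lemma_Lipschitz}, introducing the reduced cost $j(u) := J(S(u), u)$ and minimizing over $u \in L^2(\Omega)$. Since $J \geq 0$, the infimum $m := \inf_{u \in L^2(\Omega)} j(u) \geq 0$ is finite, so there exists a minimizing sequence $(u_n) \subset L^2(\Omega)$ with $j(u_n) \to m$.

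Next I would extract a weakly convergent subsequence. The Tikhonov term $\tfrac{\nu}{2}\|u - u_d\|_{L^2(\Omega)}^2$ with $\nu > 0$ makes $j$ coercive, so $(u_n)$ is bounded in $L^2(\Omega)$; by reflexivity a subsequence (not relabeled) satisfies $u_n \rightharpoonup \bar u$ weakly in $L^2(\Omega)$.

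The main obstacle is passing to the limit in the constraint, i.e. showing that the weak limit yields an admissible pair with $\bar y := S(\bar u)$. The key observation is that the embedding $L^2(\Omega) \hookrightarrow H^{-1}(\Omega)$ is compact, being the adjoint of the compact Rellich embedding $H_0^1(\Omega) \hookrightarrow L^2(\Omega)$. Hence $u_n \rightharpoonup \bar u$ in $L^2(\Omega)$ implies the strong convergence $u_n \to \bar u$ in $H^{-1}(\Omega)$. Since $S \colon H^{-1}(\Omega) \to H_0^1(\Omega)$ is globally Lipschitz by Lemma \ref{lemma_Lipschitz}, it follows that $y_n := S(u_n) \to S(\bar u) =: \bar y$ strongly in $H_0^1(\Omega)$, so $(\bar y, \bar u)$ is feasible. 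This compactness trick is precisely what circumvents the difficulty posed by the nonsmooth, nonlinear constraint: one never needs to pass to the limit in the variational inequality by hand.

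Finally I would invoke weak lower semicontinuity of $J$. The strong convergence $y_n \to \bar y$ in $H_0^1(\Omega)$ (hence in $L^2(\Omega)$) gives $\|y_n - y_d\|_{L^2(\Omega)} \to \|\bar y - y_d\|_{L^2(\Omega)}$, while weak lower semicontinuity of the $L^2$-norm yields $\liminf_n \|u_n - u_d\|_{L^2(\Omega)}^2 \geq \|\bar u - u_d\|_{L^2(\Omega)}^2$. Combining these, $J(\bar y, \bar u) \leq \liminf_n J(y_n, u_n) = m$, and since $(\bar y, \bar u)$ is admissible we conclude $J(\bar y, \bar u) = m$, so $(\bar y, \bar u)$ is globally optimal. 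The non-uniqueness assertion is qualitative: because $S$ is nonlinear, the reduced functional $j$ is in general nonconvex, so no structural mechanism forces a unique minimizer, and I would not attempt a rigorous proof beyond this observation.
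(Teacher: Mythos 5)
Your proof is correct and follows essentially the same route the paper indicates: the paper disposes of this proposition in one line, attributing it to the continuity of $S$ and the weak lower semicontinuity of $J$ (citing Barbu), and your direct-method argument --- boundedness from the Tikhonov term, weak $L^2$ convergence upgraded to strong $H^{-1}$ convergence by the compact embedding, then strong convergence of the states via the Lipschitz continuity of $S$ from Lemma \ref{lemma_Lipschitz} --- is precisely the standard mechanism behind that one-liner, and indeed the same compactness trick the paper spells out later in the proof of Lemma \ref{lemma_strong_convergence}. No gaps; your treatment of the non-uniqueness claim as a qualitative remark also matches the paper's intent.
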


Finally we state a regularity result for the control, which is essential for our error analysis in Section \ref{sec:error_analysis}.

\begin{proposition}\label{regularity_control}
Every locally optimal solution satisfies 
$\bar{u}\in L^{\kappa}(\Omega)$ with $\kappa = \infty$, if $d=1$, $\kappa < \infty$, if $d=2$, and 
$\kappa = 6$, if $d=3$.
\end{proposition}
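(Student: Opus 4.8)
The exponents in the statement are exactly those of the Sobolev embedding $H_0^1(\Omega)\hookrightarrow L^\kappa(\Omega)$ for $d=1,2,3$. This is my guiding observation: I would prove the proposition by producing an adjoint state $\bar p\in H_0^1(\Omega)$ together with a first-order relation of the form $\nu(\bar u-u_d)+\bar p=0$ in $L^2(\Omega)$. Since $u_d\in L^\infty(\Omega)$, such a relation reduces the integrability of $\bar u$ entirely to that of $\bar p$, and the embedding then gives $\bar u\in L^\kappa(\Omega)$ with $\kappa=\infty$ for $d=1$, $\kappa<\infty$ for $d=2$ and $\kappa=6$ for $d=3$. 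Everything thus hinges on (i) the availability of this gradient equation and (ii) the $H_0^1$-regularity of $\bar p$.

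The gradient equation cannot be read off from the standard adjoint calculus, since the solution operator $S$ of \eqref{VI} is not G\^ateaux differentiable. I would therefore pass through the regularized problem analysed in \ref{sec:Reg_OCP}: replacing $\|\cdot\|_{L^1(\Omega)}$ by a smooth, monotone approximation turns \eqref{VI} into a semilinear equation with a Fr\'echet differentiable solution operator $S_\gamma$, so that the regularized control problem $(P_\gamma)$ admits a genuine optimality system. This yields $\nu(\bar u_\gamma-u_d)+\bar p_\gamma=0$, where the regularized adjoint state $\bar p_\gamma\in H_0^1(\Omega)$ solves $a(v,\bar p_\gamma)+(d_\gamma\,\bar p_\gamma,v)=(\bar y_\gamma-y_d,v)$ for all $v\in H_0^1(\Omega)$, with a nonnegative coefficient $d_\gamma\ge 0$ stemming from the monotonicity of the smoothed nonlinearity. (Alternatively, the stationarity system discussed in Section~\ref{sec:NSOC} delivers the same relation.)

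The decisive estimate is uniform in $\gamma$: testing the adjoint equation with $\bar p_\gamma$ and using $d_\gamma\ge 0$ gives $\|\nabla\bar p_\gamma\|_{L^2(\Omega)}^2\le\|\bar y_\gamma-y_d\|_{L^2(\Omega)}\,\|\bar p_\gamma\|_{L^2(\Omega)}$, whence, together with the coercivity \eqref{coercivity} and Poincar\'e's inequality, $\bar p_\gamma$ is bounded in $H_0^1(\Omega)$ independently of $\gamma$, because $\bar y_\gamma$ stays bounded in $L^2(\Omega)$. I would then extract a subsequence with $\bar p_\gamma\rightharpoonup\bar p$ in $H_0^1(\Omega)$ and $\bar u_\gamma\to\bar u$, pass to the limit in the gradient equation to obtain $\nu(\bar u-u_d)+\bar p=0$ with $\bar p\in H_0^1(\Omega)$, and finally invoke the Sobolev embedding together with $u_d\in L^\infty(\Omega)$ to conclude.

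The step I expect to be the main obstacle is the limit $\gamma\to 0$, for two reasons. First, one must know that the regularized minimizers $\bar u_\gamma$ actually approximate the prescribed local optimum $\bar u$ of \eqref{prob_P}; securing this convergence of local minima is precisely the purpose of the analysis of the regularized control problem in \ref{sec:Reg_OCP}. Second, and more fundamentally, no estimate better than $H^1$ is available: the products $d_\gamma\bar p_\gamma$ admit no uniform $L^2(\Omega)$-bound — in the limit the corresponding term concentrates on the set $\{\bar y=0\}$ — so an $H^2$-bound, which would force $\bar u\in L^\infty(\Omega)$ in every dimension and contradict the statement for $d=3$, cannot hold. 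This is exactly the reduced regularity of the adjoint state anticipated in the introduction, and it is what fixes the exponents $\kappa$ to those of the $H_0^1$-embedding.
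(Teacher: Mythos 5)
Your proposal follows essentially the same route as the paper's own proof: regularize via the arctan-smoothed problem of \ref{sec:Reg_OCP}, obtain the gradient equation from Theorem~\ref{theo_reg_opt_system}, derive the uniform $H_0^1$-bound on $\bar p_\gamma$ exactly as in Lemma~\ref{lemma_pgamma_bounded} (testing the adjoint equation with $\bar p_\gamma$ and using the sign of the coefficient term), extract a weakly convergent subsequence, pass to the limit in the gradient equation, and conclude via Sobolev embedding and $u_d\in L^\infty(\Omega)$. The only cosmetic deviations are that the paper's regularized gradient equation carries the additional penalty term $2(\bar u_\gamma-\bar u)$, which vanishes in the limit and exists precisely to force convergence to the prescribed local optimum, and that in the paper's convention the regularization limit is $\gamma\to\infty$ rather than $\gamma\to 0$.
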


\begin{proof}
We apply the regularization approach with penalty from \ref{sec:Reg_OCP} and
consider the following regularized problem
\begin{equation}\label{eq:optprobreg}
    \left.
	\begin{aligned}
	& \min_{(y_\gamma, u_\gamma) \in H_0^1(\Omega)\times L^2(\Omega)} 	
	J(y_\gamma,u_\gamma) + \|u_\gamma - \bar u\|_{L^2(\Omega)}^2
	,\\
	& \text{s.t.}\quad a(y_\gamma, v)+\frac{2}{\pi}\int_\Omega\arctan(\gamma y_\gamma) v\ \mathrm dx =\langle u_\gamma, v\rangle\quad \forall v\in H_0^1(\Omega),
	\end{aligned}
	\quad \right\}
\end{equation}
where $\gamma>0$. By Theorem \ref{theo_strong_conv_P_gamma} we know that there exists a sequence $\{\bar{u}_\gamma\}$ such that $\bar{u}_\gamma\to \bar{u}$ in $L^2(\Omega)$ 
as $\gamma \to \infty$.
Further, according to Theorem \ref{theo_reg_opt_system}, the solution $\bar{u}_\gamma$ satisfies the gradient equation, i.e., 
$\bar p_\gamma + \nu (\bar u_\gamma - u_d) + 2(\bar u_\gamma - \bar u) = 0$.
Since the sequence $\{\bar{p}_\gamma\}$ is bounded in $H_0^1(\Omega)$ by Lemma \ref{lemma_pgamma_bounded}, there exists a subsequence, again denoted by $\{\bar{p}_\gamma\}$, such that $\{\bar{p}_\gamma\}$ converges weakly to 
a $\bar{p}$ in $H_0^1(\Omega)$. 
Together with the convergence of $\bar u_\gamma$ to $\bar u$ this implies
$\bar u = u_d - \nu ^{-1} \bar p$. Due to Sobolev embeddings and $u_d \in L^\infty(\Omega)$
by assumption this yields the claim.
\end{proof}

\section{Finite Element Discretization}\label{sec:Discretization}

We will discretize \eqref{prob_P} with linear finite elements. For this purpose we introduce a family of meshes $\{\mathcal{T}_h\}$. The mesh $\mathcal{T}_h$ consists of open simplices $T$ (intervals, triangles, tetrahedra) such that
\begin{equation*}
\bar{\Omega} =\bigcup_{T\in\mathcal{T}_h}\bar{T}
\end{equation*}
 and the mesh width is defined by
\begin{equation*}
h:=\max_{T\in \mathcal{T}_h} h_T\quad\text{with}\ h_T:=\diam (T).
\end{equation*}

We assume that $\mathcal{T}_h$ is shape-regular and quasi-uniform in the sense of \cite{BrennerScott}.\\

For the discretization of \eqref{prob_P} we introduce the space of piecewise linear functions
\begin{equation*}
V_h:=\{v_h\in H_0^1(\Omega):\ v_h|_T\in \mathbb{P}_1(T)\ \forall T\in\mathcal{T}_h\},
\end{equation*}

where $\mathbb{P}_1$ denotes the space of polynomials of degree $\leq 1$. Then the variational discretization of \eqref{prob_P} is given by

\begin{equation}\label{prob_Ph}\tag{P$_h$}
	\left.\begin{aligned}
	& \min_{(y_h,u)\in V_h\times L^2(\Omega)} J(y_h,u):= \frac{1}{2}\|y_h-y_d\|_{L^2(\Omega)}^2+\frac{\nu}{2}\|u - u_d\|_{L^2(\Omega)}^2,\\
	&\text{s.t.}\ a( y_h,v_h-y_h)+\|v_h\|_{L^1(\Omega)}-\|y_h\|_{L^1(\Omega)}\geq \langle u, v_h-y_h\rangle\ \ \, \forall v_h\in V_h.
	\end{aligned}
	\right\}
\end{equation}

Standard arguments yield the following result:

\begin{lemma}
For all $u\in H^{-1}(\Omega)$ the discrete variational inequality
\begin{equation}\label{disc_VI}
a( y_h,v_h-y_h)+\|v_h\|_{L^1(\Omega)}-\|y_h\|_{L^1(\Omega)}\geq \langle u, v_h-y_h\rangle\ \ \, \forall v_h\in V_h
\end{equation}
has a unique solution $y_h\in V_h$. Moreover, the associated discrete solution operator $S_h: H^{-1}(\Omega)\to V_h\subset H_0^1(\Omega)$, $u\mapsto y_h$ is globally Lipschitz continuous.
\end{lemma}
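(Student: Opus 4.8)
The plan is to mimic the proof of the continuous case (Lemma~\ref{lemma_Lipschitz}) almost verbatim, exploiting that $V_h$ is a closed (indeed finite-dimensional) subspace of $H_0^1(\Omega)$ on which the bilinear form $a$ retains its continuity and coercivity \eqref{coercivity} with the same constant $c$. The key structural observation is that \eqref{disc_VI} is an elliptic variational inequality of the second kind posed on the Hilbert space $V_h$, with the convex, Lipschitz continuous functional $j(v):=\|v\|_{L^1(\Omega)}$ playing the role of the nondifferentiable term.

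For existence and uniqueness I would invoke the classical theorem for variational inequalities of the second kind (see, e.g., \cite{Barbu1984}): for a continuous, coercive bilinear form and a proper, convex, lower semicontinuous functional $j$ on a Hilbert space, there exists a unique solution for every right-hand side in the dual. Equivalently, one may note that \eqref{disc_VI} is the first-order optimality condition of the strictly convex minimization problem
\[
\min_{v_h\in V_h}\ \tfrac12\,a(v_h,v_h)+\|v_h\|_{L^1(\Omega)}-\langle u,v_h\rangle,
\]
whose objective is continuous, strictly convex and coercive on the finite-dimensional space $V_h$ and hence admits a unique minimizer.

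Lipschitz continuity of $S_h$ follows from the standard argument of subtracting two inequalities. Given $u_1,u_2\in H^{-1}(\Omega)$ with discrete states $y_i:=S_h(u_i)$, I would test the inequality for $y_1$ with $v_h=y_2$ and the one for $y_2$ with $v_h=y_1$, and then add them. The nondifferentiable terms $\|y_1\|_{L^1(\Omega)}$ and $\|y_2\|_{L^1(\Omega)}$ cancel, leaving
\[
a(y_1-y_2,\,y_1-y_2)\le \langle u_1-u_2,\,y_1-y_2\rangle.
\]
Coercivity \eqref{coercivity} on the left-hand side and the duality estimate $\langle u_1-u_2,y_1-y_2\rangle\le\|u_1-u_2\|_{H^{-1}(\Omega)}\,\|y_1-y_2\|_{H^1(\Omega)}$ on the right then yield $\|y_1-y_2\|_{H^1(\Omega)}\le c^{-1}\|u_1-u_2\|_{H^{-1}(\Omega)}$, i.e.\ Lipschitz continuity with the same constant $L=1/c$ as in Lemma~\ref{lemma_Lipschitz}.

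There is no genuine obstacle here, which is precisely why the authors call the argument standard. The only point deserving attention — and arguably the reason the statement is recorded separately rather than folded into Lemma~\ref{lemma_Lipschitz} — is that the Lipschitz constant is independent of the mesh size $h$: it depends only on the coercivity constant $c$ of $a$ on all of $H_0^1(\Omega)$, a property that the subspace $V_h$ inherits unchanged. This $h$-uniformity is what makes $S_h$ genuinely useful for the a priori error analysis in Section~\ref{sec:error_analysis}.
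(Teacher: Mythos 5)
Your proof is correct and is precisely the ``standard argument'' the paper invokes without writing out: existence/uniqueness from the classical theory for variational inequalities of the second kind on the closed subspace $V_h$ (equivalently, minimization of the strictly convex energy), and Lipschitz continuity with the $h$-independent constant $1/c$ by testing the two inequalities against each other's solutions and adding, so the $L^1$-terms cancel. Nothing to add.
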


Consequently we get the following existence result:
\begin{proposition}
Problem \eqref{prob_Ph} has a solution which is in general not unique.
\end{proposition}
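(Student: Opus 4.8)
The plan is to apply the direct method of the calculus of variations to the reduced formulation of \eqref{prob_Ph}. Since the preceding lemma guarantees that the discrete variational inequality \eqref{disc_VI} admits a unique solution $y_h = S_h(u)$ for every $u \in L^2(\Omega) \subset H^{-1}(\Omega)$, I would first rewrite \eqref{prob_Ph} as the unconstrained minimization of the reduced objective
\[
j_h(u) := J(S_h(u), u) = \tfrac{1}{2}\|S_h(u) - y_d\|_{L^2(\Omega)}^2 + \tfrac{\nu}{2}\|u - u_d\|_{L^2(\Omega)}^2
\]
over $u \in L^2(\Omega)$. This functional is nonnegative, hence bounded below, and the choice $u = u_d$ shows that its infimum $m := \inf_u j_h(u)$ is finite. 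Moreover, since $\nu > 0$, the Tikhonov term renders $j_h$ coercive on $L^2(\Omega)$.

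Next I would take a minimizing sequence $\{u_n\} \subset L^2(\Omega)$ with $j_h(u_n) \to m$. Coercivity implies that $\{u_n\}$ is bounded in $L^2(\Omega)$, so that after passing to a subsequence (not relabeled) we may assume $u_n \rightharpoonup \bar{u}$ weakly in $L^2(\Omega)$ for some $\bar{u} \in L^2(\Omega)$.

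The crucial step is the passage to the limit in the state term, which requires continuity of $S_h$ with respect to weak convergence of the control. Here I would exploit that $V_h$ is finite-dimensional and that the control enters \eqref{disc_VI} only through the dual pairings $\langle u, v_h\rangle = (u, v_h)$ with test functions $v_h \in V_h$. Writing $\{\phi_i\}$ for a nodal basis of $V_h$, the discrete state $S_h(u)$ depends on $u$ solely through the finitely many coefficients $(u, \phi_i)$, and the weak convergence $u_n \rightharpoonup \bar{u}$ yields $(u_n, \phi_i) \to (\bar{u}, \phi_i)$ for each $i$. Combined with the Lipschitz continuity of $S_h$ stated in the previous lemma, this gives strong convergence $S_h(u_n) \to S_h(\bar{u})$ in the finite-dimensional space $V_h$. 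Consequently $\|S_h(u_n) - y_d\|_{L^2(\Omega)}^2 \to \|S_h(\bar{u}) - y_d\|_{L^2(\Omega)}^2$, while the weak lower semicontinuity of the $L^2$-norm gives $\|\bar{u} - u_d\|_{L^2(\Omega)}^2 \leq \liminf_n \|u_n - u_d\|_{L^2(\Omega)}^2$. Adding these yields $j_h(\bar{u}) \leq \liminf_n j_h(u_n) = m$, so that $\bar{u}$, together with $\bar{y}_h = S_h(\bar{u})$, is a global minimizer of \eqref{prob_Ph}.

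Finally, the non-uniqueness is a qualitative statement inherited from the nonlinearity of $S_h$: since the discrete control-to-state operator is not affine, the reduced objective $j_h$ fails to be convex in general, so the same non-convexity reasoning as for the continuous problem applies. I expect the only genuine obstacle to be the weak-to-strong passage for the states; all remaining steps are standard once the finite dimensionality of $V_h$ is brought to bear.
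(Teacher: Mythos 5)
Your proof is correct and takes essentially the approach the paper intends: the paper prints no argument for this proposition, obtaining it as a direct consequence of the preceding lemma (existence, uniqueness and Lipschitz continuity of $S_h$) together with the weak lower semicontinuity of $J$, which is exactly the direct method you carry out. Your weak-to-strong passage for the states via the finitely many coefficients $(u,\phi_i)$ is sound (since $S_h(u)$ coincides with $S_h(\Pi_h u)$ for the $L^2$-projection $\Pi_h$ onto $V_h$, whose convergence in $H^{-1}(\Omega)$ follows from the coefficient convergence); equivalently, and slightly more directly, one can use that $u_n \rightharpoonup \bar u$ in $L^2(\Omega)$ implies $u_n \to \bar u$ in $H^{-1}(\Omega)$ by compact embedding and then apply the Lipschitz continuity of $S_h$, which is the mechanism the paper itself employs in the analogous passage in the proof of Lemma~\ref{lemma_strong_convergence}.
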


It is important to note that there is no discretization of the control in \eqref{prob_Ph}. However, due to the following results it suffices to restrict the controls to the set $V_h$ in order to obtain a fully discrete optimization problem.

\begin{proposition}\label{prop:udisc}
If $\bar{u}_h$ is a locally optimal solution of \eqref{prob_Ph}, then $\bar{u}_h - u_d\in V_h$.
\end{proposition}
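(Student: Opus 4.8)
The plan is to exploit the structural observation that in the discrete variational inequality \eqref{disc_VI} the control $u$ enters only through the functional $w\mapsto\langle u,w\rangle$ evaluated on elements $w=v_h-y_h\in V_h$. Since $u\in L^2(\Omega)$, this dual pairing coincides with the $L^2$-scalar product, so the right-hand side of \eqref{disc_VI} depends on $u$ only through its $L^2$-orthogonal projection onto $V_h$. Writing $\Pi_h\colon L^2(\Omega)\to V_h$ for this projection, the first step is to establish
\[
S_h(u)=S_h(\Pi_h u)\qquad\text{for all } u\in L^2(\Omega).
\]
Indeed, if $y_h=S_h(u)$, then for every $v_h\in V_h$ the test element $v_h-y_h$ lies in $V_h$, whence $\langle u,v_h-y_h\rangle=(u,v_h-y_h)=(\Pi_h u,v_h-y_h)=\langle\Pi_h u,v_h-y_h\rangle$; thus $y_h$ also solves the discrete variational inequality with right-hand side $\Pi_h u$, and by the uniqueness asserted in the well-posedness result for \eqref{disc_VI} we conclude $S_h(u)=S_h(\Pi_h u)$. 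Consequently the discrete state, and hence the tracking term $\tfrac12\|S_h(u)-y_d\|_{L^2(\Omega)}^2$, depends on $u$ only through $\Pi_h u$.

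Next I would probe local optimality against perturbations lying in the $L^2$-orthogonal complement $V_h^\perp$ of $V_h$. Fixing a locally optimal $\bar u_h$ and an arbitrary $\delta u\in V_h^\perp$, consider the scalar function $\phi(t):=J\bigl(S_h(\bar u_h+t\,\delta u),\,\bar u_h+t\,\delta u\bigr)$. Since $\Pi_h(\bar u_h+t\,\delta u)=\Pi_h\bar u_h$, the tracking term is constant in $t$ by the first step, so
\[
\phi(t)=\text{const}+\frac{\nu}{2}\,\bigl\|\bar u_h+t\,\delta u-u_d\bigr\|_{L^2(\Omega)}^2 .
\]
This is a strictly convex quadratic in $t$ with leading coefficient $\tfrac{\nu}{2}\|\delta u\|_{L^2(\Omega)}^2>0$, so local optimality of $\bar u_h$ forces $\phi'(0)=0$, i.e.\ $\nu\,(\bar u_h-u_d,\delta u)=0$. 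As $\delta u\in V_h^\perp$ was arbitrary, this yields $(\bar u_h-u_d,\delta u)=0$ for all $\delta u\in V_h^\perp$, that is $\bar u_h-u_d\in(V_h^\perp)^\perp=V_h$, where the last identity uses that $V_h$ is finite-dimensional and hence closed.

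The step requiring the most care is the reduction $S_h(u)=S_h(\Pi_h u)$, since everything downstream hinges on the control affecting the discrete state only through its $V_h$-component; the argument is short but it is the genuine heart of the variational-discretization approach. A secondary point to verify is that the perturbed control $\bar u_h+t\,\delta u$ remains admissible for every $t$: this is immediate because the discrete variational inequality \eqref{disc_VI} possesses a unique solution for every right-hand side, so every control in $L^2(\Omega)$ yields a feasible pair $(S_h(u),u)$ and the direction $\delta u$ may be probed freely, making the first-order condition $\phi'(0)=0$ legitimate.
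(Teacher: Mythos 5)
Your proof is correct, and it takes a genuinely different route from the paper. The paper proves Proposition~\ref{prop:udisc} by regularization: it introduces the discrete penalized problems \eqref{prob_P_hgamma}, derives the optimality system \eqref{reg_disc_opt_system_1}--\eqref{reg_disc_opt_system_4}, shows via \eqref{eq:boundphg} that the discrete adjoint states $\bar p_{h,\gamma}\in V_h$ are bounded in $H^1_0(\Omega)$, extracts a weak limit $\bar p_h\in V_h$ (using that $V_h$ is weakly closed), and reads off $\bar u_h - u_d = -\nu^{-1}\bar p_h\in V_h$ from the gradient equation. You instead exploit the variational-discretization structure directly: since the control enters \eqref{disc_VI} only through $(u, v_h - y_h)$ with $v_h - y_h\in V_h$, the discrete solution operator satisfies $S_h(u)=S_h(\Pi_h u)$, so perturbations $\delta u\in V_h^\perp$ leave the tracking term invariant; local optimality then reduces to a first-order condition for the strictly convex quadratic Tikhonov term alone, giving $(\bar u_h-u_d,\delta u)=0$ for all $\delta u\in V_h^\perp$ and hence $\bar u_h-u_d\in (V_h^\perp)^\perp=V_h$. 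Your argument is more elementary and self-contained: it needs no regularization, no differentiability of $S_h$, and no adjoint machinery --- indeed it sidesteps the nonsmoothness of $S_h$ entirely by never differentiating the tracking term. What the paper's heavier approach buys is a by-product your argument does not deliver: the explicit adjoint state $\bar p_h$ and the quantitative bound $\|\bar u_h - u_d\|_{H^1_0(\Omega)}\leq \frac{1}{\nu c}(\|\bar y_h\|_{L^2(\Omega)}+\|y_d\|_{L^2(\Omega)})$ recorded in Remark~\ref{rem:bounduhH1}, which is precisely what feeds Lemma~\ref{lemma_boundedness} and the convergence-rate proof of Theorem~\ref{conv_rates}. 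So your proof fully settles the proposition as stated, but it could not simply replace the paper's proof without supplying that $H^1$ estimate by other means.
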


\begin{proof}
The proof is analogous to the continuous case in Proposition~\ref{regularity_control}. For convenience of the reader, 
we shortly sketch the arguments. 
We consider the discrete counterpart to \eqref{eq:optprobreg}:
\begin{equation}\label{prob_P_hgamma}
	\left.\begin{aligned}
	& \min_{(y_{h,\gamma}, u_{h,\gamma}) \in V_h\times L^2(\Omega)} J(y_{h,\gamma},u_{h,\gamma})+\|u_{h,\gamma}-\bar{u}_h\|_{L^2(\Omega)}^2\\
	& \text{s.t.}\quad a(y_{h,\gamma},v_h)+\frac{2}{\pi}(\arctan(\gamma y_{h,\gamma}), v_h) =\langle u_{h,\gamma}, v_h\rangle\quad \forall v_h\in V_h,
	\end{aligned}
	\right\}
\end{equation}
where $\gamma>0$. 
Now, based on Proposition~\ref{prop:properties_reg_disc_VI}, exactly the same arguments 
that lead to Theorem~\ref{theo_strong_conv_P_gamma} also give the existence of a sequence $\{\bar u_{h, \gamma}\}$ 
of locally optimal solutions to \eqref{prob_P_hgamma} that strongly converges in $L^2(\Omega)$ to $\bar u_h$ as $\gamma \to \infty$.

Analogously to Theorem \ref{theo_reg_opt_system} one derives the following optimality system for \eqref{prob_P_hgamma}:
For every locally optimal solution $\bar u_{h, \gamma}$ of \eqref{prob_P_hgamma} 
there exist $\bar p_{h, \gamma} \in V_h$ and $\bar\mu_{h, \gamma} \in L^2(\Omega)$ such that 
\begin{subequations}
\begin{align}
&a(\bar{y}_{h,\gamma}, v_h)+\frac{2}{\pi}(\arctan(\gamma\bar{y}_{h,\gamma}),v_h) =\langle\bar{u}_{h,\gamma}, v_h\rangle\quad\forall v_h\in V_h\label{reg_disc_opt_system_1}\\
&a(\bar{p}_{h,\gamma}, v_h)+\langle\bar{\mu}_{h,\gamma}, v_h\rangle= (\bar{y}_{h,\gamma} -y_d,v_h)\quad\forall v_h\in V_h\label{reg_disc_opt_system_2}\\
&
\bar{\mu}_{h,\gamma} = \frac{2\gamma}{\pi(1+\gamma^2 \bar{y}_{h,\gamma}^2)}\bar{p}_{h,\gamma}
\quad \text{a.e.\ in }\Omega
\nonumber\\
&(\bar{\mu}_{h,\gamma},\bar{p}_{h,\gamma})\geq 0\label{reg_disc_opt_system_3}\\
& 
\bar p_{h, \gamma} + \nu (\bar u_{h, \gamma} - u_d)+ 2(\bar{u}_{h,\gamma}-\bar{u}_h) = 0 
\quad \text{a.e.\ in }\Omega.
\label{reg_disc_opt_system_4}
\end{align}
\end{subequations}
Now, consider again the above sequence $\{\bar u_{h, \gamma}\}$ converging to $\bar u_h$. 
From Proposition~\ref{prop:properties_reg_disc_VI} we deduce the convergence of the associated states and thus, 
the boundedness of the latter in $H^1_0(\Omega)$. 
Then, using the sign condition in \eqref{reg_disc_opt_system_3},
we can argue exactly as in the proof of Lemma~\ref{lemma_pgamma_bounded} to show that 
the associated sequence of adjoint states $\{\bar{p}_{h,\gamma}\}$ is bounded in $H_0^1(\Omega)$.
To be more precise, we have
\begin{equation}\label{eq:boundphg}
    \|\bar p_{h, \gamma}\|_{H^1_0(\Omega)} \leq \frac{1}{c} \big(\|\bar y_{h,\gamma} \|_{L^2(\Omega)} 
    + \| y_{d} \|_{L^2(\Omega)} \big).
\end{equation}
Possibly after passing to a subsequence, 
this implies that $\bar{p}_{h,\gamma} \rightharpoonup \bar{p}_h$ in $H^1_0(\Omega)$ as $\gamma \to \infty$. 
Since $V_h$ is a closed subspace of $H^1_0(\Omega)$, we obtain $\bar p_h \in V_h$.
Therefore, the convergence of $\bar{u}_{h,\gamma}$ to $\bar u_h$ in combination with the gradient equation in 
\eqref{reg_disc_opt_system_4} gives $\bar{u}_h - u_d=-\nu^{-1}\bar{p}_h\in V_h$ as claimed.
\end{proof} 

\begin{corollary}
    If $u_d$ in the objective in \eqref{prob_Ph} 
    is replaced by a function in $V_h$, denoted by $u_{d,h}$ (e.g.\ a suitable quasi-interpolation), 
    then Proposition~\ref{prop:udisc} shows that every local solution of \eqref{prob_Ph} is indeed an element of $V_h$.
\end{corollary}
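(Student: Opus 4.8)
The plan is to observe that the entire argument of Proposition~\ref{prop:udisc} never exploits any specific property of the desired control $u_d$ beyond its appearance in the gradient equation; consequently, the proof carries over verbatim after replacing $u_d$ by $u_{d,h}$. I would therefore set up the discrete regularized problem \eqref{prob_P_hgamma} with $u_{d,h}$ in place of $u_d$ and record that its optimality system is structurally identical: equations \eqref{reg_disc_opt_system_1}--\eqref{reg_disc_opt_system_3} are unchanged, and only the gradient equation \eqref{reg_disc_opt_system_4} now reads $\bar p_{h, \gamma} + \nu (\bar u_{h, \gamma} - u_{d,h}) + 2(\bar{u}_{h,\gamma}-\bar{u}_h) = 0$.

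Next I would pass to the limit exactly as in Proposition~\ref{prop:udisc}. The crucial point is that the boundedness estimate \eqref{eq:boundphg} for the discrete adjoint states $\{\bar p_{h,\gamma}\}$ does not involve the desired control at all, so it remains valid unchanged. Hence, after extracting a subsequence, $\bar p_{h,\gamma} \rightharpoonup \bar p_h$ in $H^1_0(\Omega)$, and since $V_h$ is a closed subspace of $H^1_0(\Omega)$, we again conclude $\bar p_h \in V_h$. Combining the strong convergence $\bar u_{h,\gamma} \to \bar u_h$ with the modified gradient equation then yields $\bar u_h - u_{d,h} = -\nu^{-1} \bar p_h \in V_h$.

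Finally, I would invoke that $V_h$ is a linear subspace: since both $\bar u_h - u_{d,h} \in V_h$ and, by hypothesis, $u_{d,h} \in V_h$, their sum satisfies $\bar u_h = u_{d,h} - \nu^{-1}\bar p_h \in V_h$, which is the assertion. I do not expect any genuine obstacle here; the only thing requiring care is to confirm that every step of the proof of Proposition~\ref{prop:udisc}---in particular the convergence of the regularized solutions and the adjoint-state bound---is insensitive to the precise choice of the desired control, so that the replacement of $u_d$ by $u_{d,h}$ is admissible. Once this is checked, the membership $\bar u_h \in V_h$ follows immediately from the closedness of the gradient-equation relation under linear combinations in $V_h$.
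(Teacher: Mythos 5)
Your proposal is correct and follows exactly the argument the paper intends: the corollary is stated as an immediate consequence of Proposition~\ref{prop:udisc}, namely one applies that proposition (whose proof, as you verify, nowhere uses any property of the desired control beyond its appearance in the gradient equation) to the problem with $u_{d,h}$ in place of $u_d$, obtaining $\bar u_h - u_{d,h} \in V_h$, and then concludes $\bar u_h \in V_h$ by linearity of $V_h$. Your explicit check that the adjoint-state bound \eqref{eq:boundphg} and the limit passage are insensitive to the replacement is precisely the (implicit) content of the paper's one-line justification.
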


\begin{remark}\label{rem:bounduhH1}
    The above proof also shows an estimate for $\bar u_h$ in $H^1_0(\Omega)$.
    By the weak convergence of $\bar p_{h, \gamma}$ and the weak lower semicontinuity of norms, 
    the bound in \eqref{eq:boundphg} carries over to $\bar p_h$ and thus, the discrete gradient equation gives
    \begin{equation*}
        \|\bar u_h - u_d\|_{H^1_0(\Omega)} \leq \frac{1}{\nu c} \big(\|\bar y_{h} \|_{L^2(\Omega)} 
        + \| y_{d} \|_{L^2(\Omega)} \big),
    \end{equation*}
    where we used the strong convergence of the states by Proposition~\ref{prop:properties_reg_disc_VI}.
\end{remark}

\section{Error Analysis}\label{sec:error_analysis}
This section is devoted to the derivation of nearly optimal a priori error estimates for the control. The proof is an adaptation of the technique introduced in \cite{MeyerThoma2013}. Its most important ingredients are a quadratic growth condition and an $L^\infty$-error estimate for the state which is established in \ref{sec:Appendix}. It is worth noting that we do not use the discrete maximum principle as e.g. in \cite{Baiocchi1977, Nitsche1977, MeyerThoma2013} and therefore, the triangulation is not required to be acute.\\

 In order to simplify the notation we introduce the reduced functional
\begin{equation*}
f: L^2(\Omega)\to \R,\quad  f(u):=J(S(u),u)
\end{equation*}  
as well as the discrete reduced functional 
\begin{equation*}
f_h: L^2(\Omega)\to\R,\quad f_h(u):=J(S_h(u),u).
\end{equation*}
 
Let $\bar{u}\in L^2(\Omega)$ be a fixed local optimum of \eqref{prob_P}. For the derivation of a priori error estimates the following assumption is crucial.
 
 \begin{assumption}[Quadratic Growth Condition]\label{ass:QGC}
There are $\epsilon,\ \delta>0$ such that
\begin{equation*}
f(\bar{u})\leq f(u)-\delta \|u-\bar{u}\|_{L^2(\Omega)}^2\quad \forall u\in B_\epsilon(\bar{u}),
\end{equation*}
where $B_\epsilon(\bar{u}):=\{u\in L^2(\Omega) : \|u-\bar{u}\|_{L^2(\Omega)}\leq \epsilon\}$.
 \end{assumption}

\begin{remark}
For the obstacle problem Assumption \ref{ass:QGC} holds if $\bar{u}$ satisfies some second-order sufficient optimality conditions (cf. \cite{KunischWachsmuth2012}). However, for variational inequalities of the second kind results with respect to sufficient optimality conditions and local quadratic growth are unknown. In Theorem \ref{theo:sec_order_condition} we establish second-order sufficient optimality conditions for our problem under the assumption that the set $\mathcal{M}:=\{x\in\Omega : y(x)=0, \nabla y(x)\neq 0\}$ is empty.
\end{remark}

\begin{lemma}\label{lemma_strong_convergence}
Suppose that $\bar{u}$ satisfies Assumption \ref{ass:QGC}. Then there is a sequence $\{\bar{u}_h\}$ of locally optimal solutions to \eqref{prob_Ph} with $\bar{u}_h\to \bar{u}$ in $L^2(\Omega)$ as $h\to 0$.
\end{lemma}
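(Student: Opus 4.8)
The plan is to follow the by-now standard localization strategy for constructing locally optimal discrete controls that converge to a given continuous local optimum satisfying a growth condition. Since $\bar u$ is only \emph{locally} optimal, I cannot hope to approximate it by global minimizers of \eqref{prob_Ph}. Instead I would introduce the auxiliary discrete problem obtained by adding the constraint $u \in \bar B_\epsilon(\bar u)$ to \eqref{prob_Ph}, with $\epsilon$ taken from Assumption \ref{ass:QGC}. The closed ball $\bar B_\epsilon(\bar u)$ is bounded, convex and hence weakly sequentially compact in $L^2(\Omega)$, while the discrete reduced functional $f_h$ is weakly lower semicontinuous on it: the control tracking term is convex and continuous, and the state tracking term is even weakly continuous, because weak convergence in $L^2(\Omega)$ implies strong convergence in $H^{-1}(\Omega)$ (the embedding $L^2(\Omega)\hookrightarrow H^{-1}(\Omega)$ being compact), so that the global Lipschitz continuity of $S_h$ yields convergence of the associated discrete states in $V_h$. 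Consequently the auxiliary problem admits a minimizer $\bar u_h \in \bar B_\epsilon(\bar u)$, and these are the candidates for the sought sequence.

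Next I would show $\bar u_h \to \bar u$. Being bounded, a subsequence satisfies $\bar u_h \rightharpoonup \tilde u$ in $L^2(\Omega)$, and $\tilde u \in \bar B_\epsilon(\bar u)$ since the ball is weakly closed. The decisive ingredients are two consistency facts. First, $f_h(\bar u) \to f(\bar u)$, which follows from the state error estimate $\|S_h(\bar u) - S(\bar u)\|_{L^2(\Omega)}\to 0$ of \ref{sec:Appendix}. Second, $\liminf_{h\to 0} f_h(\bar u_h) \ge f(\tilde u)$: splitting $S_h(\bar u_h) - S(\tilde u) = \big(S_h(\bar u_h) - S(\bar u_h)\big) + \big(S(\bar u_h) - S(\tilde u)\big)$, the first summand tends to zero by the state error estimate applied uniformly over the $L^2$-bounded sequence $\{\bar u_h\}$ (whose states are bounded in $H^2(\Omega)$ via Lemma \ref{lem:slack}), and the second by the Lipschitz continuity of $S$ from Lemma \ref{lemma_Lipschitz} together with $\bar u_h \to \tilde u$ in $H^{-1}(\Omega)$; the control term is treated by weak lower semicontinuity. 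Combining these with the optimality inequality $f_h(\bar u_h)\le f_h(\bar u)$ gives $f(\tilde u) \le \limsup_{h\to 0} f_h(\bar u_h) \le f(\bar u)$. Inserting $\tilde u$ into Assumption \ref{ass:QGC} then yields $f(\bar u) \le f(\tilde u) - \delta\|\tilde u - \bar u\|_{L^2(\Omega)}^2 \le f(\bar u) - \delta\|\tilde u - \bar u\|_{L^2(\Omega)}^2$, which forces $\tilde u = \bar u$. As the limit is independent of the subsequence, the whole sequence converges weakly to $\bar u$.

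To upgrade to strong convergence I would exploit that the chain of inequalities above collapses to equalities, so that $f_h(\bar u_h) \to f(\bar u)$; since the state tracking term converges as already shown, the control tracking term $\tfrac{\nu}{2}\|\bar u_h - u_d\|_{L^2(\Omega)}^2$ must converge to $\tfrac{\nu}{2}\|\bar u - u_d\|_{L^2(\Omega)}^2$. Weak convergence together with convergence of the norms in the Hilbert space $L^2(\Omega)$ then implies the strong convergence $\bar u_h \to \bar u$. Finally I would return to the unconstrained problem \eqref{prob_Ph}: since $\bar u_h \to \bar u$ strongly, for all sufficiently small $h$ one has $\|\bar u_h - \bar u\|_{L^2(\Omega)} < \epsilon$, so $\bar u_h$ lies in the interior of $\bar B_\epsilon(\bar u)$ and, being a minimizer over the ball, is a genuine local minimizer of \eqref{prob_Ph}, which is the asserted sequence.

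I expect the main obstacle to be precisely this localization together with the correct handling of consistency. One must guarantee that the finite element error of the states vanishes \emph{uniformly} along the varying sequence of controls $\{\bar u_h\}$ and not merely at the fixed point $\bar u$, which is where the uniform $H^2$-bound furnished by Lemma \ref{lem:slack} and the state error estimate of \ref{sec:Appendix} enter; and one must invoke the quadratic growth condition at exactly the right place to rule out weak convergence to a spurious competitor inside the ball.
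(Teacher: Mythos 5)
Your proof is correct and structurally identical to the paper's: the same localization to the auxiliary problem $\min_{u\in B_\epsilon(\bar u)} f_h(u)$, the same weak-compactness extraction and identification of the weak limit via Assumption~\ref{ass:QGC}, the same norm-convergence argument for the Tikhonov term to upgrade weak to strong convergence, and the same interiority argument showing that $\bar u_h$ is eventually a genuine local solution of \eqref{prob_Ph}. The one place you deviate is the consistency step: the paper proves and invokes the elementary energy-norm result Lemma~\ref{lem:convH1} (strong $H^{-1}(\Omega)$-convergence of controls implies strong $H_0^1(\Omega)$-convergence of the discrete states to the continuous state), which handles the varying sequence $\{\bar u_h\}$ in one stroke, whereas you assemble the same conclusion from the splitting $S_h(\bar u_h)-S(\tilde u) = \big(S_h(\bar u_h)-S(\bar u_h)\big) + \big(S(\bar u_h)-S(\tilde u)\big)$, treating the first term by the $L^\infty$-estimate of Theorem~\ref{theo_Nochetto2} (uniform over the sequence, since its bound depends only on $\|u\|_{L^2(\Omega)}$, which is controlled by the ball constraint) and the second by the Lipschitz continuity of $S$ from Lemma~\ref{lemma_Lipschitz}. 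Both routes work; the paper's lemma is the lighter, self-contained tool (a testing argument for the two variational inequalities plus density, requiring no regularity theory), while your route leans on the heavier appendix machinery but applies it within its stated hypotheses ($u\in L^p(\Omega)$ with $p=2$), and correctly avoids any circular appeal to the $H^1$-bound of Lemma~\ref{lemma_boundedness}, which becomes available only after the present lemma is established.
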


\begin{proof}
    Based on Lemma~\ref{lem:convH1} proven in the appendix the following proof is standard and similar to
    the proof of Theorem~\ref{theo_strong_conv_P_gamma} (see also \cite[Lemma~5.5]{MeyerThoma2013}, 
    where an analogous result for the optimal control of the obstacle problem is proven).
    Nevertheless, for later purpose and for convenience of the reader, we sketch the arguments.
    Following the classical localization argument from \cite{CasasTroeltzsch2002} we define the following 
    localized discrete problems:
    \begin{equation}\tag{P$_{h, \varepsilon}$}\label{eq:Pheps}
        \min_{u\in B_\varepsilon(\bar u)} \; f_h(u),
    \end{equation}
    where $B_\varepsilon(\bar u)$ is the closed $L^2$-ball from Assumption~\ref{ass:QGC}. 
    By standard arguments the above problem admits a globally optimal solution for every $h>0$, denoted by 
    $\bar u_h$. Due to the constraint this sequence is bounded in $L^2(\Omega)$ and consequently admits 
    a weakly convergent subsequence with limit $\tilde u\in L^2(\Omega)$, 
    which, by compact embedding, converges strongly in $H^{-1}(\Omega)$. 
    By Lemma~\ref{lem:convH1} the associated states $\bar y_h := S(\bar u_h)$ converge strongly to $\tilde y := S(\tilde u)$.
    The weak lower semicontinuity of the objective along with the isolated local optimality of $\bar u$ implies $\tilde u = \bar u$.
    Moreover, the Tikhonov term in the objective yields the norm convergence of $\bar u_h$ so that 
    $\bar u_h \to \bar u$ in $L^2(\Omega)$. This implies that $\bar u_h$ is in the interior of $B_\varepsilon(\bar u)$ 
    for $h$ sufficiently small and therefore, $\bar u_h$ is a local solution of \eqref{prob_Ph}.
\end{proof}

In the following $\{\bar u_h\}$ always refers to the sequence from Lemma~\ref{lemma_strong_convergence}. 
The above proof shows that the sequence of discrete states $\{\bar y_h\}$ is bounded in $H^1_0(\Omega)$. 
Thus, Remark~\ref{rem:bounduhH1} leads to the following

\begin{lemma}\label{lemma_boundedness}
The sequence $\{\bar{u}_h\}$ is bounded in $H^1(\Omega)$.
\end{lemma}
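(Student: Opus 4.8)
The plan is to directly exploit the structure already exposed in the preceding proofs, specifically the bound recorded in Remark~\ref{rem:bounduhH1}. The statement to prove is that the sequence $\{\bar u_h\}$ from Lemma~\ref{lemma_strong_convergence} is bounded in $H^1(\Omega)$. First I would recall that $\bar u_h - u_d \in V_h \subset H^1_0(\Omega)$ by Proposition~\ref{prop:udisc}, so that the $H^1$-norm of $\bar u_h$ is meaningful and we only need to control $\bar u_h - u_d$ in $H^1_0(\Omega)$, since $u_d \in L^\infty(\Omega)$ is a fixed function. The key identity is the discrete gradient equation $\bar u_h - u_d = -\nu^{-1}\bar p_h$, combined with the estimate for $\bar p_h$ that Remark~\ref{rem:bounduhH1} derives from the uniform bound \eqref{eq:boundphg} on the regularized adjoint states.

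The central step is to verify that the quantity appearing on the right-hand side of the bound in Remark~\ref{rem:bounduhH1}, namely $\|\bar y_h\|_{L^2(\Omega)} + \|y_d\|_{L^2(\Omega)}$, is uniformly bounded in $h$. The term $\|y_d\|_{L^2(\Omega)}$ is fixed by assumption~ii). For the discrete states, I would invoke the observation made immediately before the statement: the proof of Lemma~\ref{lemma_strong_convergence} shows that $\{\bar y_h\}$ is bounded in $H^1_0(\Omega)$, hence in particular bounded in $L^2(\Omega)$ by the embedding $H^1_0(\Omega) \hookrightarrow L^2(\Omega)$. Putting these facts together, Remark~\ref{rem:bounduhH1} yields
\begin{equation*}
    \|\bar u_h - u_d\|_{H^1_0(\Omega)} \leq \frac{1}{\nu c}\big(\|\bar y_h\|_{L^2(\Omega)} + \|y_d\|_{L^2(\Omega)}\big) \leq C
\end{equation*}
with a constant $C$ independent of $h$. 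Since $u_d \in L^\infty(\Omega) \subset H^1(\Omega)$ (more carefully, one estimates $\|\bar u_h\|_{H^1(\Omega)} \leq \|\bar u_h - u_d\|_{H^1(\Omega)} + \|u_d\|_{H^1(\Omega)}$, treating $u_d$ as the fixed contribution), the triangle inequality gives the desired uniform bound on $\|\bar u_h\|_{H^1(\Omega)}$.

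I do not expect a genuine obstacle here, as the result is essentially a bookkeeping corollary: all the analytic work lives in the uniform adjoint bound \eqref{eq:boundphg} and in the $H^1_0$-boundedness of the discrete states established during the proof of Lemma~\ref{lemma_strong_convergence}. The one point requiring a moment's care is ensuring that the constant in Remark~\ref{rem:bounduhH1} is truly $h$-independent; this is guaranteed because \eqref{eq:boundphg} holds with the coercivity constant $c$ and a $y_d$-dependent term that do not depend on the discretization, and because the weak convergence $\bar p_{h,\gamma} \rightharpoonup \bar p_h$ preserves the bound via weak lower semicontinuity of the norm. Thus the only substantive ingredient to track is the uniform $L^2$-bound on $\{\bar y_h\}$, which the localization argument of Lemma~\ref{lemma_strong_convergence} already supplies.
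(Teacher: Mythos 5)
Your proposal is correct and coincides with the paper's own argument: the paper likewise observes that the proof of Lemma~\ref{lemma_strong_convergence} yields $H^1_0(\Omega)$-boundedness of the discrete states $\{\bar y_h\}$ and then invokes Remark~\ref{rem:bounduhH1} (i.e.\ the bound \eqref{eq:boundphg} carried over to $\bar p_h$ by weak lower semicontinuity, combined with the discrete gradient equation $\bar u_h - u_d = -\nu^{-1}\bar p_h$). One small caveat: the inclusion $L^\infty(\Omega)\subset H^1(\Omega)$ you state is false, so strictly speaking your argument (and indeed the paper's lemma as stated, since only $u_d\in L^\infty(\Omega)$ is assumed) controls $\bar u_h - u_d$ in $H^1_0(\Omega)$ rather than $\bar u_h$ in $H^1(\Omega)$; this is, however, exactly what is needed downstream, because the $L^p$-bounds on $\bar u_h$ used in Theorem~\ref{conv_rates} follow from the Sobolev embedding applied to $\bar u_h - u_d$ together with $u_d\in L^\infty(\Omega)$.
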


Now we are in the position to state our main result.

\begin{theorem}[Convergence Rate]\label{conv_rates}
Suppose that $\bar{u}$ satisfies Assumption \ref{ass:QGC}. Then there exists a constant $C>0$ such that, for $h>0$ sufficiently small,
\begin{equation*}
\|\bar{u}-\bar{u}_h\|_{L^2(\Omega)}\leq 
\begin{cases}Ch|\log(h)|, &\text{if}\ d=1\\
Ch^{1-\epsilon}|\log(h)|^{\frac{1}{2}}, &\text{if}\ d=2\\
Ch^{\frac{3}{4}}|\log(h)|^{\frac{1}{2}}, & \text{if}\ d=3,\end{cases}
\end{equation*}
where $\epsilon>0$.
\end{theorem}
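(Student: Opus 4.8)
The plan is to exploit the quadratic growth condition from Assumption~\ref{ass:QGC} together with the $L^\infty$-error estimate for the state established in the appendix. The starting point is testing the growth inequality with the discrete optimum $\bar u_h$, which for $h$ small enough lies in $B_\epsilon(\bar u)$ by Lemma~\ref{lemma_strong_convergence}. This gives
\begin{equation}\label{eq:growth_start}
\delta \|\bar u - \bar u_h\|_{L^2(\Omega)}^2 \leq f(\bar u_h) - f(\bar u).
\end{equation}
The task is then to bound the right-hand side by the finite element approximation error. First I would insert the discrete functional as an intermediary, writing $f(\bar u_h)-f(\bar u) = \big(f(\bar u_h)-f_h(\bar u_h)\big) + \big(f_h(\bar u_h)-f_h(\bar u)\big) + \big(f_h(\bar u)-f(\bar u)\big)$. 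The middle term is nonpositive, since $\bar u_h$ is a local minimizer of $f_h$ and $\bar u$ is admissible (and close for small $h$), so it can be dropped. The two remaining terms each compare $f$ and $f_h$ at a fixed control, and by definition of $J$ they reduce to differences of the form $\tfrac12(\|S(u)-y_d\|^2 - \|S_h(u)-y_d\|^2)$ evaluated at $u=\bar u_h$ and $u=\bar u$.

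Next I would estimate these state-error terms. Factoring the difference of squares yields an expression controlled by $\|S(u)-S_h(u)\|_{L^2(\Omega)}$ times a uniformly bounded factor (using that the states stay bounded in $L^2$, e.g.\ via Lemma~\ref{lemma_boundedness} and the boundedness of $\{\bar y_h\}$ in $H_0^1$). The crucial ingredient is the $L^\infty$-error estimate for the variational inequality from \ref{sec:Appendix}, which following \cite{Nochetto1988} delivers a bound of order $h^2|\log h|$ (up to the dimension-dependent regularity of the state), and correspondingly an $L^2$-bound on $\|S(u)-S_h(u)\|$. At $u=\bar u_h$ one must be careful that the error estimate holds uniformly along the sequence; here Lemma~\ref{lemma_boundedness} guarantees $\bar u_h$ is bounded in $H^1(\Omega)$, and via Proposition~\ref{regularity_control} the relevant states enjoy enough regularity to feed the appendix estimate. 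Combining, the right-hand side of \eqref{eq:growth_start} is bounded by $C\, e_h$, where $e_h$ denotes the (dimension-dependent) state error.

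The final step is to read off the rate. Dividing \eqref{eq:growth_start} by $\delta$ and taking square roots gives $\|\bar u - \bar u_h\|_{L^2(\Omega)} \leq C\sqrt{e_h}$, and the square root of the $L^\infty$ state-error bound produces exactly the asserted rates: in $d=1$ the state is $W^{2,\infty}$-regular ($\kappa=\infty$ in Proposition~\ref{regularity_control}) so $e_h \sim h^2|\log h|^2$ giving $h|\log h|$; in $d=2$ the reduced $L^p$-regularity costs an arbitrarily small $\epsilon$, yielding $h^{1-\epsilon}|\log h|^{1/2}$; and in $d=3$ the limitation $\kappa=6$ degrades the state error to order $h^{3/2}|\log h|$, whose square root is $h^{3/4}|\log h|^{1/2}$. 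I expect the main obstacle to be making the state-approximation error uniform in $u$ along the sequence $\{\bar u_h\}$ and tracking precisely how the dimension-dependent regularity from Proposition~\ref{regularity_control} enters the $L^\infty$-estimate of \ref{sec:Appendix}; the remaining manipulations with the difference-of-squares and the minimality of $\bar u_h$ are routine.
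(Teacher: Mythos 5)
Your proposal is correct and follows essentially the same route as the paper's proof: the quadratic growth bound $\delta\|\bar u-\bar u_h\|_{L^2(\Omega)}^2\leq f(\bar u_h)-f(\bar u)$, the three-term insertion of $f_h$ with the middle term dropped via the (localized) discrete optimality $f_h(\bar u_h)\leq f_h(\bar u)$, the difference-of-squares estimate reducing everything to $\|S(u)-S_h(u)\|_{L^2(\Omega)}$, and the appendix $L^\infty$-estimate fed by the $H^1$-boundedness of $\{\bar u_h\}$ (Lemma \ref{lemma_boundedness}) and the regularity of $\bar u$ (Proposition \ref{regularity_control}), followed by taking square roots. The dimension-dependent rates you read off match the paper's exactly.
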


\begin{proof}
The proof follows the lines of \cite[Theorem~5.8]{MeyerThoma2013}.
As seen in the proof of Lemma~\ref{lemma_strong_convergence}, $\bar u_h$ is a global optimal solution of \eqref{eq:Pheps} 
and therefore,
\begin{equation}\label{fu}
f_h(\bar{u}_h)\leq f_h(\bar{u}).
\end{equation}
Moreover, for $h$ sufficiently small, we have $\bar{u}_h\in B_\epsilon(\bar{u})$.
Therefore, Assumption \ref{ass:QGC} and \eqref{fu} imply
\begin{align}\label{eq_1}
\delta\|\bar{u}_h-\bar{u}\|_{L^2(\Omega)}^2&\leq f(\bar{u}_h)-f_h(\bar{u}_h)+f_h(\bar{u})-f(\bar{u})+f_h(\bar{u}_h)-f_h(\bar{u})\nonumber\\
&\leq \left| f(\bar{u}_h)-f_h(\bar{u}_h)\right|+\left|f(\bar{u})-f_h(\bar{u})\right|.
\end{align}
For the first term in \eqref{eq_1}, we obtain
\begin{align}\label{eq_2}
&\left| f(\bar{u}_h)-f_h(\bar{u}_h)\right|=\frac{1}{2}\left| \|S(\bar{u}_h)-y_d\|_{L^2(\Omega)}^2-\|S_h(\bar{u}_h)-S(\bar{u}_h)+S(\bar{u}_h)-y_d\|_{L^2(\Omega)}^2\right|\nonumber\\
&\qquad\leq \frac{1}{2}\|S_h(\bar{u}_h)-S(\bar{u}_h)\|_{L^2(\Omega)}^2+ \|S_h(\bar{u}_h)-S(\bar{u}_h)\|_{L^2(\Omega)}\|S(\bar{u}_h)-y_d\|_{L^2(\Omega)}.
\end{align}
By Lemma~\ref{lemma_boundedness} the sequence $\{\bar u_h\}$ is bounded in $H^1(\Omega)$.
Therefore, due to the continuous embeddings $H^1(\Omega)\hookrightarrow L^\infty(\Omega)$, if $d=1$, 
and $H^1(\Omega)\hookrightarrow L^p(\Omega)$ for all $p<\infty$, if $d=2$, respectively, for all $p\leq 6$, if $d=3$,
Theorem \ref{theo_Nochetto2} leads to
\begin{equation}\label{eq_3}
\|S_h(\bar{u}_h)-S(\bar{u}_h)\|_{L^2(\Omega)}\leq \begin{cases}C(h|\log(h)|)^2, &\text{if}\ d=1\\
Ch^{2(1-\epsilon)}|\log(h)|, & \text{if}\ d=2\\
Ch^{\frac{3}{2}}|\log(h)|, & \text{if}\ d=3.\end{cases}
\end{equation}
Invoking again the boundedness of $\{\bar{u}_h\}$ in $H^1(\Omega)$
and the Lipschitz continuity of the solution operator $S$ (cf. Lemma \ref{lemma_Lipschitz}) imply that 
$\|S(\bar{u}_h)-y_d\|_{L^2(\Omega)}$ is bounded. Hence, by \eqref{eq_2} and \eqref{eq_3} we finally get
\begin{equation*}
\left| f(\bar{u}_h)-f_h(\bar{u}_h)\right|\leq \begin{cases}C(h|\log(h)|)^2, & \text{if}\ d=1\\
Ch^{2(1-\epsilon)}|\log(h)|, & \text{if}\ d=2\\
Ch^{\frac{3}{2}}|\log(h)|, & \text{if}\ d=3.\end{cases}
\end{equation*}
Applying the same arguments to the second term of \eqref{eq_1} completes the proof.
\end{proof}

\begin{remark}
In the three-dimensional case our theory only leads to a convergence rate of $3/4$
(neglecting the logarithmic terms).
The proof shows that this result would change,
if $L^2$-error estimates known for elliptic equations would also hold for the variational inequality \eqref{VI}.  
However, an adaptation of the well-known Nitsche trick from elliptic PDEs to our problem 
is unknown so far and, in view of the negative results for the obstacle problem, see \cite{ChristofMeyer2018_2}, 
also very unlikely to hold, cf.\ also \cite[Remark~2.5]{Nochetto1988} in this context.

Moreover, it is also worth noting that the $W^{2,p}$-regularity of the domain 
is crucial for the proof of the convergence rate for $d\geq 2$.
\end{remark}

\section{Necessary and Sufficient Optimality Conditions}\label{sec:NSOC}

The aim of the following two sections is to show that the convergence rates of Theorem~\ref{conv_rates} 
are not that bad as they appear at first glance. To this end we will construct an exact (locally optimal) solution, which 
only provides a reduced regularity, i.e., the adjoint state is no more regular than $H^{{3/2}-\varepsilon}(\Omega)$.
The crucial aspect in this context is to find a solution that satisfies the quadratic growth condition in Assumption~\ref{ass:QGC}.
Usually such a growth condition is ensured by second-order sufficient optimality conditions and we will 
follow the same approach here, too. However, due to the lack of differentiability of the control-to-state map 
the derivation of optimality conditions for \eqref{prob_P} is all but standard.
To be more precise, 
since the solution operator $S$ of the variational inequality \eqref{VI} is in general not G\^{a}teaux differentiable, 
the standard adjoint approach for the derivation of optimality conditions cannot be applied.

There are essentially two different alternative approaches to resolve this issue. The first is based on regularization, where 
optimality conditions are obtained by taking the limit in the regularized optimality system, 
see e.g.\ \cite{DelosReyes2011}. 
A drawback of this approach is that the passage to the limit in general leads to a loss of information and consequently 
the optimality conditions obtained in this way are not rigorous enough for our purposes.
Therefore, we follow the second approach and use strong stationarity conditions 
based on differentiability properties of the control-to-state operator $S$ from \cite{ChristofMeyer2018}, see also \cite[Theorem~5.2.15]{christof}.
More precisely, using the Hadamard directional differentiability of $S$, one can follow the lines of 
\cite{Mignot1976, MignotPuel1984} to derive an optimality system, which is equivalent to the purely primal 
optimality condition $f'(\bar u;h) \geq 0$ for all $h \in L^2(\Omega)$.
For the problem under consideration this has been carried out in detail in \cite{CMST2020}. 
The optimality conditions obtained in this way read as follows:

\begin{theorem}[Strong Stationarity, {\cite[Theorem~3.10]{CMST2020}}]\label{theo:strong_stationarity}
    Let $\bar u \in L^2(\Omega)$ be locally optimal and denote the associated state by $\bar y = S(\bar u)$. 
    We impose the following assumptions on $\bar y$:
    \begin{itemize}
        \item \emph{(Regularity)} It holds $\bar y  \in C^1(\Omega)$.
        \item \emph{(Structure of the Active Set)}
        There exists a set $\CC \subseteq \partial \{\bar y  \neq 0\} \cup \partial \Omega$ such that
        \begin{enumerate} 
            \item $ \mathcal{C}$ is closed and has $H^1(\R^d)$-capacity zero,
            \item  $( \partial \{\bar y  \neq 0\}  \cup \partial \Omega) \setminus \mathcal{C}$ 
            is a (strong) $(d-1)$-dimensional Lipschitz submanifold of $\mathbb{R}^d$,
            \item the sets 
            \begin{equation*}
                \qquad\qquad \mathcal{N}_+ := \{\nabla \bar y = 0\} \cap \partial \{\bar y > 0\} \setminus \mathcal{C}, \quad
                \mathcal{N}_- :=  \{\nabla \bar y = 0\} \cap \partial \{\bar y < 0\}  \setminus \mathcal{C} 
            \end{equation*}
            are relatively open in $( \partial \{\bar y  \neq 0\}  \cup \partial \Omega )\setminus \mathcal{C}$.
        \end{enumerate}
    \end{itemize}
    Define the set 
    \begin{equation}\label{eq:defM}
        \mathcal{M}:=\{x\in\Omega: \bar y(x) = 0,\ \nabla \bar y(x)\neq 0\}
    \end{equation}
    (which is well-defined due to the regularity assumption on $\bar y$) as well as the Hilbert space
    \begin{equation*}
        W_{\bar y}:=\left\{z\in H_0^1(\Omega): \int_\mathcal{M}\frac{(\tr z)^2}{\|\nabla \bar y\|}\mathrm ds<\infty \right\}, 
    \end{equation*}
    endowed with the scalar product
    \begin{equation*}
        (z_1,z_2)_{W_{\bar y}}:=a(z_1,z_2) + \int_\mathcal{M}2\frac{(\tr z_1)(\tr z_2)}{\|\nabla \bar y\|}\di s, 
    \end{equation*}
    where $\tr: H^1_0(\Omega) \to L^2(\MM)$ denotes the trace on $\MM$.
    Moreover, we introduce the convex cone $\KK(\bar y)$ by
    \begin{align*}
        \mathcal{K}(\bar y) :=\{v\in W_{\bar y}: \
        |v| = \bar q \, v \text{ a.e., where } y = 0, \;
        \tr(v^\pm)= 0\ \text{a.e. on}\ \mathcal{N}_\mp\},
    \end{align*}
    where $\bar q = Q(\bar u)\in L^\infty(\Omega)$ is the slack variable associated with $\bar y$, see Lemma~\ref{lem:slack}.

    Then there exist an \emph{adjoint state} $\bar p\in W_{\bar y}$ and a \emph{multiplier} $\bar \mu \in W_{\bar y}^*$ such that 
    \begin{subequations}\label{eq:sstatlasso}
    \begin{gather}
         \bar p + \nu(\bar u - u_d) = 0 \quad \text{a.e.\ in  } \Omega, \label{eq:lassograd}\\
         a(\bar p,  v)
        + 2 \int_{\MM} \frac{(\tr \bar p)(\tr v)}{\|\nabla \bar y \|} \, \di s
         = (\bar y - y_d, v)  - \dual{\bar \mu}{v}_{W_{\bar{y}}}, \quad \forall\,v \in W_{\bar y}, \label{eq:lassAD}\\
         \bar p\in \KK(\bar y), \quad \dual{\bar \mu}{v}_{ W_{\bar{y}}} \geq 0 \quad \forall\, v\in \KK(\bar y). \label{eq:lassoSlack}
    \end{gather}    
    \end{subequations}
\end{theorem}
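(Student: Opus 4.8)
The plan is to follow the dualization technique of Mignot and Puel \cite{Mignot1976, MignotPuel1984}, carried out for the present problem in \cite{CMST2020}. The starting point is the Hadamard directional differentiability of the solution operator $S$ from \cite{ChristofMeyer2018} (see also \cite{christof}). Under the stated regularity and structural hypotheses on $\bar y$, the directional derivative $\delta_h := S'(\bar u; h)$ is the unique solution in the critical cone $\KK(\bar y)$ of the variational inequality
\[
    (\delta_h, v - \delta_h)_{W_{\bar y}} \geq (h, v - \delta_h) \quad \forall\, v \in \KK(\bar y),
\]
posed with respect to the $W_{\bar y}$-scalar product; equivalently, $\delta_h$ is the $W_{\bar y}$-metric projection onto $\KK(\bar y)$ of the Riesz representative of $(h,\cdot)$. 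This linearized inequality is exactly the object that generates the weighted trace term on $\MM$ appearing in the adjoint equation \eqref{eq:lassAD}.

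First I would derive the purely primal (B-stationarity) condition. Local optimality of $\bar u$ together with the chain rule for the Hadamard-differentiable reduced functional $f$ yields
\[
    f'(\bar u; h) = (\bar y - y_d, \delta_h) + \nu(\bar u - u_d, h) \geq 0 \quad \forall\, h \in L^2(\Omega).
\]
Testing the variational inequality for $\delta_h$ with the admissible choices $v = 0$ and $v = 2\delta_h$ (both lie in $\KK(\bar y)$ since it is a cone) gives the two identities $(\delta_h, \delta_h)_{W_{\bar y}} = (h, \delta_h)$ and $(\delta_h, v)_{W_{\bar y}} \geq (h, v)$ for all $v \in \KK(\bar y)$. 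These complementarity-type relations are the workhorses of the dualization.

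Next, I would construct the adjoint state and the multiplier. Define $\bar p \in \KK(\bar y)$ as the solution of the adjoint variational inequality obtained by projecting the data $\bar y - y_d$ onto $\KK(\bar y)$ in $W_{\bar y}$, and set $\bar \mu \in W_{\bar y}^*$ to be the residual in \eqref{eq:lassAD}, i.e.\ $\dual{\bar\mu}{v}_{W_{\bar y}} := (\bar y - y_d, v) - (\bar p, v)_{W_{\bar y}}$. By the standard characterization of the metric projection onto a closed convex cone, one obtains $\bar p \in \KK(\bar y)$, the complementarity $\dual{\bar\mu}{\bar p}_{W_{\bar y}} = 0$, and the sign condition in \eqref{eq:lassoSlack} (up to the sign convention fixed by the linearized inequality), which together establish \eqref{eq:lassAD} and \eqref{eq:lassoSlack}. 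It then remains to identify this $\bar p$ with the adjoint entering the gradient equation: inserting the two testing identities for $\delta_h$ into the adjoint relation and combining with the primal inequality $f'(\bar u; h) \geq 0$, while exploiting the positive homogeneity of $h \mapsto \delta_h$, forces the gradient equation \eqref{eq:lassograd}, $\bar p + \nu(\bar u - u_d) = 0$.

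I expect the main obstacle to be twofold. First, the sound setup of the linearized problem --- the well-definedness of the trace $\tr\colon H^1_0(\Omega) \to L^2(\MM)$, the closedness of $\KK(\bar y)$ in $W_{\bar y}$, and the very emergence of the weighted term on $\MM$ --- relies essentially on the structural assumptions on the active set (the capacity-zero exceptional set $\CC$, the $(d-1)$-dimensional Lipschitz submanifold structure, and the relative openness of $\NN_+$ and $\NN_-$); without them neither $W_{\bar y}$ nor $\KK(\bar y)$ need be well-behaved. Second, and more delicate, upgrading the primal inequality \emph{for all} $h \in L^2(\Omega)$ to the gradient \emph{equality} \eqref{eq:lassograd} requires a range/density argument: one must show that the reachable set $\{\delta_h : h \in L^2(\Omega)\}$ is dense in $\KK(\bar y)$ in the $W_{\bar y}$-topology, so that the one-sided condition can be inverted into an equation. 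This density --- resting on the density of the Riesz images of $L^2$-functionals in $W_{\bar y}$ and, once more, on the fine geometry of the active set --- is the heart of the strong stationarity result and the precise point at which the argument would break down for a less regular state $\bar y$.
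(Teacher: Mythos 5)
A preliminary remark on context: the paper itself does not prove this theorem; it is imported verbatim from \cite[Theorem~3.10]{CMST2020}, and the surrounding remarks only record the strategy (Hadamard directional differentiability of $S$ from \cite{ChristofMeyer2018}, followed by the dualization of Mignot--Puel, carried out in \cite{CMST2020}). Your outline follows exactly that strategy, and several of your ingredients are correct: the characterization of $\delta_h = S'(\bar u;h)$ as the unique solution of a variational inequality on $\KK(\bar y)$ posed with the $W_{\bar y}$-inner product, the purely primal condition $f'(\bar u;h)=(\bar y-y_d,\delta_h)+\nu(\bar u-u_d,h)\geq 0$, the two identities obtained by testing with $v=0$ and $v=2\delta_h$, and the identification of the density of the reachable set $\{\delta_h : h\in L^2(\Omega)\}$ as the crux of the argument.

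There is, however, a genuine gap in your dualization step: the construction of $(\bar p,\bar\mu)$ is backwards and produces the wrong sign. You define $\bar p$ as the $W_{\bar y}$-metric projection onto $\KK(\bar y)$ of the Riesz representative of $(\bar y-y_d,\cdot)$ and let $\bar\mu$ be the residual in \eqref{eq:lassAD}. The standard projection characterization then gives $\bigl(\bar p - R(\bar y - y_d), v-\bar p\bigr)_{W_{\bar y}}\geq 0$ for all $v\in\KK(\bar y)$; testing with $v=0$ and $v=2\bar p$ yields $\dual{\bar\mu}{\bar p}_{W_{\bar y}}=0$ and then $\dual{\bar\mu}{v}_{W_{\bar y}}\leq 0$ on $\KK(\bar y)$ --- the opposite of the sign condition in \eqref{eq:lassoSlack}. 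Moreover, nothing connects this projection to $-\nu(\bar u-u_d)$: the system \eqref{eq:lassAD}, \eqref{eq:lassoSlack} alone does not determine $\bar p$ (note that no complementarity between $\bar p$ and $\bar\mu$ is asserted), so the gradient equation \eqref{eq:lassograd} cannot be ``forced'' a posteriori from a $\bar p$ constructed independently of it. In the Mignot--Puel scheme the logic runs the other way: one \emph{defines} $\bar p := -\nu(\bar u - u_d)\in L^2(\Omega)$, so that \eqref{eq:lassograd} holds by construction, and the entire difficulty is to show that this $L^2$-function lies in $W_{\bar y}$ at all, belongs to the cone $\KK(\bar y)$, and that the residual functional $v\mapsto(\bar y-y_d,v)-(\bar p,v)_{W_{\bar y}}$ is nonnegative on $\KK(\bar y)$. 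This is achieved by exploiting the primal inequality in both directions $\pm h$ for directions $h$ whose Riesz images lie in the lineality space of $\KK(\bar y)$ (on which $h\mapsto\delta_h$ acts linearly, giving an equality rather than an inequality), combined with precisely the density arguments you name but do not carry out. So your proposal correctly lists the hard analytic ingredients, but the step that assembles them into the stationarity system would fail as written.
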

 
Some words concerning this result are in order:

\begin{remark}~
\,
    \begin{itemize}
        \item Recall that a subset $\mathcal{N}\subset \R^d$ is a $(d-1)$-dimensional strong Lipschitz submanifold of $\R^d$,
        if $\mathcal{N}$ is  locally the graph of a Lipschitz function defined on $\R^{d-1}$ (cf. \cite{Walter1976}). 
        Moreover, note that 
        since $\bar y$ is assumed to be continuously differentiable, the implicit function theorem implies that $\MM$
        is a $(d-1)$-dimensional $C^1$-submanifold of $\mathbb{R}^d$.
        Furthermore, since $\mathcal{N}_+$ and $ \mathcal{N}_-$ are relatively open subsets of 
        $\partial \{\bar y  \neq 0\} \setminus \mathcal{C}$, they are themselves strong $(d-1)$-dimensional Lipschitz 
        submanifolds of $\mathbb{R}^d$.
        Consequently $\mathcal{M}$ and $\mathcal{N}_\pm$ are strong $(d-1)$-dimensional Lipschitz submanifolds
        and therefore, traces on these sets are well-defined.
        \item  The assumptions on the regularity of $\bar y$ and the structure of the active set are needed 
        for the directional differentiability of the control-to-state map, see \cite{ChristofMeyer2018, christof}.
        Conditions different from the ones in Theorem~\ref{theo:strong_stationarity} 
        guaranteeing the directional differentiability of $S$ can be found in \cite{DelosReyesMeyer2016, hs18}. 
        While the former are even more restrictive than the assumptions used here, 
        the latter deal with the rather abstract assumption that the feasible set associated with the dual problem 
        of \eqref{VI} is polyhedric, which is in general wrong and may be hard to verify in practice, see \cite{GW19}.
        Differentiability results that go without any additional assumptions are not known so far, 
        at least to the best of our knowledge. In this respect, the VI under consideration behaves different from the obstacle 
        problem, where the directional differentiability holds without any further assumptions, see \cite{Mignot1976}.
        \item As already mentioned, the stationarity conditions in \eqref{eq:sstatlasso} are equivalent to purely primal 
        optimality conditions (see \cite{CMST2020} for details). They can thus be regarded as the most rigorous 
        stationarity conditions, in particular they are sharper than the ones in \cite{DelosReyes2011}, 
        the latter obtained via regularization.
    \end{itemize}
\end{remark} 
 
An adaptation of the technique introduced in \cite{KunischWachsmuth2012} for the obstacle problem leads to the following second-order sufficient optimality conditions:

\begin{theorem}[Second-Order Sufficient Conditions]\label{theo:sec_order_condition}
Assume that $\bar{u}\in L^2(\Omega)$ is such that $\bar y = S(\bar u)$ satisfies the regularity assumption
and the structural assumption on the active set in Theorem~\ref{theo:strong_stationarity}.
Moreover, suppose that the set $\MM$ from \eqref{eq:defM} is empty (so that $W_{\bar y} = H^1_0(\Omega)$) 
and that $\bar{p}\in H_0^1(\Omega)$ and $\bar{\mu}\in H^{-1}(\Omega)$ exist such that 
the strong stationarity conditions in \eqref{eq:sstatlasso} are fulfilled.
Furthermore, assume that there exists an $\epsilon>0$ so that, for every $u\in L^2(\Omega)$ 
with $\|u-\bar{u}\|_{L^2(\Omega)}\leq \epsilon$, there holds
\begin{equation}\label{eq:signcond}
\langle\bar{\mu}, S(u)-\bar{y}\rangle-\langle Q(u)-\bar{q}, \bar{p}\rangle\geq 0.
\end{equation}
Then $\bar{u}$ fulfills the quadratic growth condition from Assumption \ref{ass:QGC}.
\end{theorem}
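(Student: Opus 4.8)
The plan is to establish the quadratic growth condition by an \emph{exact} expansion of the difference $f(u)-f(\bar u)$, without ever invoking differentiability of the control-to-state map $S$; this follows the strategy of \cite{KunischWachsmuth2012}. The key observation is that the cost functional $J$ is a sum of squared $L^2$-norms, so the expansion produces exactly two quadratic remainder terms together with linear terms that can be rewritten via the strong stationarity system. Throughout I abbreviate $y=S(u)$, $\bar y=S(\bar u)$, $\delta y = y-\bar y$, $\delta u = u-\bar u$, and note that since $\MM=\emptyset$ we have $W_{\bar y}=H^1_0(\Omega)$, so that $\bar p \in H^1_0(\Omega)$, $\bar\mu\in H^{-1}(\Omega)$, and all pairings below reduce to the usual $H^{-1}(\Omega)$--$H^1_0(\Omega)$ duality (with $L^2$-products identified accordingly, which is legitimate since $Q(u),\bar q\in L^\infty(\Omega)\subset L^2(\Omega)$ and $\bar p\in H^1_0(\Omega)\subset L^2(\Omega)$).

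First I would apply the elementary binomial identity to each squared norm to obtain
\begin{equation*}
f(u)-f(\bar u)= (\bar y-y_d,\delta y) + \nu(\bar u - u_d,\delta u) + \tfrac12\|\delta y\|_{L^2(\Omega)}^2 + \tfrac{\nu}{2}\|\delta u\|_{L^2(\Omega)}^2 .
\end{equation*}
Next I would eliminate the two linear terms using the optimality system. The gradient equation \eqref{eq:lassograd} gives $\nu(\bar u - u_d,\delta u) = -(\bar p,\delta u)$. The adjoint equation \eqref{eq:lassAD}, evaluated at the test function $v=\delta y\in H^1_0(\Omega)$ (the boundary integral over $\MM$ dropping out), yields $(\bar y-y_d,\delta y)=a(\bar p,\delta y)+\langle\bar\mu,\delta y\rangle$. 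To handle the remaining bilinear term $a(\bar p,\delta y)$, I would write the slack-variable state equation \eqref{eq:qpde} from Lemma~\ref{lem:slack} for both $u$ and $\bar u$, subtract the two identities, and test the difference with $v=\bar p$; this produces
\begin{equation*}
a(\bar p,\delta y) = (\delta u,\bar p) - \langle Q(u)-\bar q,\bar p\rangle .
\end{equation*}

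Substituting these three relations back, the crucial cancellation is that the terms $(\delta u,\bar p)$ and $-(\bar p,\delta u)$ annihilate each other, leaving
\begin{equation*}
f(u)-f(\bar u)= \langle\bar\mu,S(u)-\bar y\rangle - \langle Q(u)-\bar q,\bar p\rangle + \tfrac12\|\delta y\|_{L^2(\Omega)}^2 + \tfrac{\nu}{2}\|u-\bar u\|_{L^2(\Omega)}^2 .
\end{equation*}
For $u\in B_\epsilon(\bar u)$ the sign condition \eqref{eq:signcond} guarantees that the first two terms are nonnegative, and $\tfrac12\|\delta y\|_{L^2(\Omega)}^2\ge 0$ can be discarded, so that $f(u)-f(\bar u)\ge \tfrac{\nu}{2}\|u-\bar u\|_{L^2(\Omega)}^2$. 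This is precisely the quadratic growth condition of Assumption~\ref{ass:QGC} with $\delta=\nu/2>0$. I do not expect a serious analytic obstacle here: the argument is an exact identity rather than an approximation, and the entire difficulty has been front-loaded into the hypotheses (existence of the strongly stationary pair $(\bar p,\bar\mu)$ and, above all, the sign condition \eqref{eq:signcond}). The only points requiring care are the bookkeeping in the cancellation and verifying that every duality pairing is well defined once $\MM=\emptyset$ collapses $W_{\bar y}$ to $H^1_0(\Omega)$; the real conceptual content is simply recognizing that \eqref{eq:signcond} is engineered to be exactly the combination of curvature terms coming from the variational inequality's nonlinearity, leaving the Tikhonov term free to supply the coercivity.
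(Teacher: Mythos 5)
Your proof is correct, and the algebraic core is the same as the paper's: both arguments rest on the exact expansion of $J$ into linear plus quadratic terms, the elimination of the linear terms by testing the difference of the two slack-variable state equations with $\bar p$ and invoking the adjoint and gradient equations, and the resulting exact identity
\begin{equation*}
f(u)-f(\bar u)= \langle\bar\mu,S(u)-\bar y\rangle - \langle Q(u)-\bar q,\bar p\rangle + \tfrac12\|S(u)-\bar y\|_{L^2(\Omega)}^2 + \tfrac{\nu}{2}\|u-\bar u\|_{L^2(\Omega)}^2 ,
\end{equation*}
whose first two terms are nonnegative by the sign condition \eqref{eq:signcond}. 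The difference is purely in the logical wrapper: the paper follows the classical second-order-sufficiency template of \cite{KunischWachsmuth2012} and argues by contradiction, assuming sequences $u_k\to\bar u$ violating quadratic growth, dividing by $t_k^2=\|u_k-\bar u\|_{L^2(\Omega)}^2$, and extracting the contradiction $0<\nu\leq 1/k$; you instead read the identity off directly and conclude growth with the explicit constant $\delta=\nu/2$. Your direct route is arguably cleaner here and makes visible something the contradiction argument obscures, namely that no limit process is needed at all because the expansion is exact (a consequence of the objective being exactly quadratic and the stationarity system being used only through identities); the paper's sequence-based structure is the one that generalizes when the objective is not quadratic and genuine second-order remainder estimates along sequences are required, as in the obstacle-problem setting it is adapted from. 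Both proofs need, and you correctly note, that $\MM=\emptyset$ collapses $W_{\bar y}$ to $H^1_0(\Omega)$ so that the adjoint equation may be tested with $S(u)-\bar y$ and all pairings are standard ones.
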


\begin{proof}
The proof starts with the classical argument by contradiction.
Let us suppose that Assumption \ref{ass:QGC} is not fulfilled. Then there exist sequences $u_k\to \bar{u}$ in $L^2(\Omega)$, $y_k=S(u_k)$, $q_k=Q(u_k)$ such that
\begin{equation}\label{eq:contra}
J(\bar{y},\bar{u})+\frac{1}{k}\|u_k-\bar{u}\|_{L^2(\Omega)}^2> J(y_k,u_k) \quad \forall\, k\in \N.
\end{equation}
We abbreviate $t_k:=\|u_k-\bar{u}\|_{L^2(\Omega)}$.
By testing the equations for $(\bar y, \bar q,  \bar u)$ and $(y_k, q_k, u_k)$ with $\bar p$
and using \eqref{eq:lassograd} and \eqref{eq:lassAD} (with $\MM = \emptyset$), we arrive at
\begin{equation*}
\begin{aligned}
    0 &= \dual{-\Delta(y_k - \bar y) + (q_k - \bar q) - (u_k - \bar u)}{\bar p}\\
    &= (\bar y - y_d, y_k - \bar y) - \dual{\bar \mu}{y_k - \bar y} + (q_k -\bar q, \bar p) + \nu (\bar u -u_d , u_k -\bar u).
\end{aligned}
\end{equation*}
Together with \eqref{eq:contra}, it follows that
\begin{equation*}
\begin{aligned}
\frac{t_k^2}{k} &> J(y_k,u_k)-J(\bar{y},\bar{u})\\
&= (\bar y - y_d , y_k - \bar y) + \nu (\bar u -u_d , u_k - \bar u) +
\frac{1}{2}\|\bar{y}-y_k\|_{L^2(\Omega)}^2+\frac{\nu}{2}\|\bar{u}-u_k\|_{L^2(\Omega)}^2\\
&= \frac{1}{2}\|\bar{y}-y_k\|_{L^2(\Omega)}^2+\frac{\nu}{2}\|\bar{u}-u_k\|_{L^2(\Omega)}^2
+\langle\bar{\mu},y_k-\bar{y}\rangle-\langle q_k-\bar{q}, \bar{p}\rangle.
\end{aligned}
\end{equation*}
Dividing by $t_k^2 = \|\bar{u}-u_k\|_{L^2(\Omega)}^2$ yields
\begin{equation}\label{positivity_req}
\frac{1}{t_k^2} \Big(\langle \bar{\mu}, y_k - \bar y\rangle-\langle q_k - \bar q,\bar{p}\rangle \Big)
< \frac{1}{k}-\frac{1}{2}\left[ \frac{\|y_k - \bar y\|_{L^2(\Omega)}^2}{t_k^2} + \nu \right] 
\leq  \frac{1}{k}-\frac{1}{2}\,\nu 
\end{equation}
Due to \eqref{eq:signcond} the left-hand side is non-negative for $k\in \N$ sufficiently large and thus, 
$0<\nu \leq \frac{1}{k} \to 0$, which yields the desired contradiction.
\end{proof}

\newpage
\begin{remark}~
\,
    \begin{itemize}
        \item The above result is of rather theoretical interest, since the sign condition in \eqref{eq:signcond} 
        is hard to verify in practice. However, it allows us to construct an exact solution that satisfies the 
        quadratic growth condition in the next section.
        \item One can substantially generalize the above analysis following the lines of \cite{KunischWachsmuth2012}. 
        In particular, one can allow for more general (not necessarily strictly convex) objective functionals by using the Hadamard directional differentiability of $S$. 
        However, to keep the discussion concise and since the result is of limited practical use as seen above, 
        we restrict to the precise form of our objective.
        \item The essential difference to the proof of \cite[Theorem~2.2]{KunischWachsmuth2012} is that 
        we require the sign condition in \eqref{eq:signcond} for the sum of the dual pairings involving the multipliers 
        $\bar p$ and $\bar \mu$, while in \cite{KunischWachsmuth2012}, conditions are established, which ensure 
        the sign condition for each dual pairing individually. For the construction of our exact solution however, 
        it is essential to treat both addends together. 
    \end{itemize}
\end{remark}

\section{A One-Dimensional Example with Reduced Regularity}\label{sec:1D_Ex}

In this section we construct an exact locally optimal solution to a one-dimensional example for \eqref{prob_P}, 
which fulfills the sufficient optimality conditions from Theorem~\ref{theo:sec_order_condition} (and thus also the quadratic 
growth condition) and, at the same time, provides only reduced regularity. 
More precisely, the adjoint state is only an element of 
$H^{{3/2}-\varepsilon}(\Omega)$, $\varepsilon > 0$.

For this purpose let $\Omega:=(-1,1)\subset\R$ and assume that $\mathcal{M}=\emptyset$ (according to the assumptions in 
Theorem~\ref{theo:sec_order_condition}). 
Then the strong stationarity system of Theorem \ref{theo:strong_stationarity} simplifies to
 \begin{subequations}\label{strong_stat_1D}
 \begin{gather}
a(\bar{y}, v) + (\bar{q}, v) =\langle \bar{u}, v\rangle\quad \forall v \in H_0^{1}(-1,1)\label{eq1}\\
\bar{q}(x)\bar{y}(x)=|\bar{y}(x)|,\quad |\bar{q}(x)|\leq 1\quad \text{a.e. in}\ (-1,1)\label{eq2}\\
a(\bar{p},v)= (\bar{y}-y_d, v)- \langle \bar{\mu},v\rangle \quad \forall v\in H_0^1(-1,1)\label{eq3}\\
\bar{p}\in \mathcal{K}(\bar{y}), \quad \langle \bar{\mu},v\rangle\geq 0 \quad \forall v\in \mathcal{K}(\bar{y})\label{eq4}\\
\bar{p}+\nu (\bar{u} - u_d) = 0.\label{eq5}
\end{gather}
\end{subequations}

Our aim is to construct a locally optimal solution of \eqref{prob_P} with minimal regularity. We will first define a solution of the strong stationarity system \eqref{strong_stat_1D}  and afterwards verify that the second-order sufficient conditions of Theorem \ref{theo:sec_order_condition} are satisfied. We start with defining the optimal state by
\begin{equation*}
\bar{y}(x):=\begin{cases}
88\, \alpha\, x^3+156\,\alpha\, x^2+82\,\alpha \,x+14\,\alpha , & x\in[-1,-0.5)\\
16\,\alpha\, x^4, & x\in [-0.5,0.5]\\
-88\,\alpha\, x^3+156\,\alpha \,x^2-82\,\alpha\, x+14\,\alpha , & x\in (0.5,1]
\end{cases}
\end{equation*}
with a constant $\alpha >0$. We observe that $\bar{y}\in C^2([-1,1])$ such that the regularity assumption in 
Theorem~\ref{theo:strong_stationarity} is fulfilled. 
Further we observe that $\bar{y}(x)>0$ for all $x\in (-1, 1)\setminus \{0\}$ and $\bar{y}(0)=\bar{y}'(0)=0$, 
which implies that $\mathcal{M}=\emptyset$ as required.
Moreover, as sets of zero $H^1(\R^d)$-capacity are empty in one dimension, we have $\NN_+ = \{0\}$, 
while $\NN_- = \emptyset$. Note that $\NN_+ = \partial\{\bar y \neq 0\} = \{0\}$ and thus, $\NN_+$ is relatively open.
Hence, all assumptions of Theorem~\ref{theo:strong_stationarity} are satisfied. Moreover, if we set 
\begin{equation*}
\bar{q}\equiv 1,
\end{equation*}
then the complementarity system \eqref{eq2} holds. Together with $\NN_+ = \{0\}$ and $\NN_- = \emptyset$ 
this choice implies that 
\begin{equation}\label{eq:Kbsp}
\mathcal{K}(\bar{y})=\{v\in H_0^1(-1,1): v(0)\geq 0\}.
\end{equation}

For the adjoint state we choose
\begin{equation*}
\bar{p}(x):=\begin{cases}
-(m+\beta)x^2-mx+\beta,& x\in [-1,0]\\
-(m+\beta)x^2+mx+\beta,& x\in (0,1],
\end{cases}
\end{equation*}
where $\beta=\bar{p}(0)>0$ and $ m>0$. 
Note that $\bar{p}$ has a kink located at zero and is therefore not twice weakly differentiable.
In addition, as $\bar p(0) = \beta > 0$, we have $\bar{p}\in\mathcal{K}(\bar{y})$ as required in \eqref{eq4}.
Furthermore,  since $\bar{p}$ is smooth in $(-1,0)$ and in $(0,1)$, it follows from integration by parts that
\begin{align*}
\int_{-1}^1 \bar{p}'v' \di x
= - 2m v(0) + \int_{-1}^1 2(m+\beta) v\di x .
\end{align*}
Therefore, if we set 
\begin{equation}\label{eq:yd}
    \bar{\mu} : = 2 m \,\delta_0\, \in C([-1,1])^* \hookrightarrow H^{-1}(-1,1)
    \quad \text{and}\quad 
    y_d := \bar{y}- 2(m+\beta),
\end{equation}
then the adjoint equation in \eqref{eq3} is fulfilled.
Herein, $\delta_0$ denotes the Dirac measure centered at zero. Note that, due to \eqref{eq:Kbsp}, 
\begin{equation*}
    \dual{\bar\mu}{v} = 2 m \, v(0) \geq 0 \quad \forall\, v\in \KK(\bar y)
\end{equation*}
so that the second condition in \eqref{eq4} is also satisfied. 
The optimal control is obtained by \eqref{eq1}, i.e.
\begin{equation*}
\bar{u}(x) = - \bar y''(x) + \bar q(x) =\begin{cases}
-528 \,\alpha\,  x - 312\, \alpha  +1, & x\in[-1,-0.5)\\
-192 \,\alpha\,  x^2 +1, & x\in[-0.5,0.5]\\
528\, \alpha\,  x - 312 \,\alpha  + 1, & x\in (0.5,1].
\end{cases}
\end{equation*} 
Finally, to fulfill the gradient equation \eqref{eq5}, we define
 \begin{equation}\label{eq:ud}
 u_d(x):=\frac{1}{\nu}\,\bar{p}(x)+\bar{u}(x).
 \end{equation}
With this setting the strong stationarity conditions \eqref{strong_stat_1D} are satisfied. It remains to check that the sufficient conditions are satisfied. 

For this purpose we need the following auxiliary result:

\begin{lemma}\label{lem:ylip}
    Let $u \in L^1(-1,1)$ be given. Then $y = S(u)$ is Lipschitz continuous on $[-1,1]$ with Lipschitz constant 
    $L_u = \|u\|_{L^1(-1,1)} + 2$.
\end{lemma}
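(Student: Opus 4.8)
The plan is to remove the variational inequality altogether via the complementarity reformulation of Lemma~\ref{lem:slack}, turning the problem into a linear second-order ODE with an $L^1$ right-hand side, and then to bound $y'$ directly. First I would check that $S$ is actually defined on $L^1$: in one dimension $H_0^1(-1,1)\hookrightarrow C([-1,1])$, so any $u\in L^1(-1,1)$ acts on $v\in H_0^1(-1,1)$ through $\int_{-1}^1 uv\,\di x$ with $|\int_{-1}^1 uv\,\di x|\le \|u\|_{L^1(-1,1)}\|v\|_{L^\infty}\le C\,\|u\|_{L^1(-1,1)}\|v\|_{H^1(\Omega)}$, which gives $L^1(-1,1)\hookrightarrow H^{-1}(-1,1)$ and hence $y=S(u)\in H_0^1(-1,1)$. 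By Lemma~\ref{lem:slack} there is a slack variable $q\in L^\infty(-1,1)$ with $|q|\le 1$ a.e.\ such that $a(y,v)+(q,v)=\langle u,v\rangle$ for all $v\in H_0^1(-1,1)$, which in the distributional sense reads $y''=q-u$.

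Next I would run the regularity bootstrap. Since $q\in L^\infty(-1,1)\subset L^1(-1,1)$ and $u\in L^1(-1,1)$, the right-hand side $q-u$ lies in $L^1(-1,1)$, so $y'\in W^{1,1}(-1,1)$. In one dimension $W^{1,1}(-1,1)$ embeds into the absolutely continuous, hence continuous, functions on $[-1,1]$; thus (after modification on a null set) $y\in C^1([-1,1])$ and its Lipschitz constant equals $\|y'\|_{L^\infty(-1,1)}$.

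Finally I would locate a critical point of $y'$ and integrate. As $y\in H_0^1(-1,1)$ we have $y(-1)=y(1)=0$, so Rolle's theorem furnishes a point $\xi\in(-1,1)$ with $y'(\xi)=0$. Since $y'$ is absolutely continuous, the fundamental theorem of calculus gives $y'(x)=y'(\xi)+\int_\xi^x y''(t)\,\di t=\int_\xi^x\big(q(t)-u(t)\big)\,\di t$, whence for every $x\in[-1,1]$
\[
|y'(x)|\le \int_{-1}^1|q(t)|\,\di t+\int_{-1}^1|u(t)|\,\di t\le 2+\|u\|_{L^1(-1,1)}=L_u,
\]
using $|q|\le 1$ on an interval of length $2$. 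Therefore $\|y'\|_{L^\infty(-1,1)}\le L_u$, which is exactly the asserted Lipschitz bound.

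The argument is essentially routine, and I do not expect a genuine obstacle. The only points demanding care are the functional-analytic preliminaries — namely the embedding $L^1(-1,1)\hookrightarrow H^{-1}(-1,1)$ that makes $S$ meaningful on $L^1$ data, and the fact that the slack bound $|q|\le 1$ from Lemma~\ref{lem:slack} remains available for merely $L^1$ right-hand sides — together with the standard one-dimensional embedding $W^{1,1}\hookrightarrow C$ used for the $C^1$-regularity. The heart of the estimate is then the elementary observation that $\|q\|_{L^1(-1,1)}\le 2$, which produces the additive constant $2$ in $L_u$.
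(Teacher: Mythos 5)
Your proof is correct and takes essentially the same route as the paper's: both pass to the complementarity form $y'' = q - u$ with $|q|\leq 1$ from Lemma~\ref{lem:slack}, deduce $y \in W^{2,1}(-1,1)\subset C^1([-1,1])$, use Rolle's theorem to find a zero of $y'$, and bound $\|y'\|_{L^\infty(-1,1)}$ by $\int_{-1}^1 |y''|\,\di x \leq \|u\|_{L^1(-1,1)} + 2$. Your preliminary observation that $L^1(-1,1)\hookrightarrow H^{-1}(-1,1)$ in one dimension, so that $S(u)$ and the slack variable are well-defined for $L^1$ data, is a welcome detail that the paper leaves implicit.
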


\begin{proof}
    Since $y'' = q - u$ with the slack variable $q \in L^\infty(-1,1)$ from Lemma~\ref{lem:slack}, we find that $y \in W^{2,1}(-1,1)$. 
    Therefore, $y\in C^1([-1,1])$ and, since $y(-1) = y(1) = 0$, Rolle's theorem implies that there is a point 
    $\tilde x \in (-1,1)$ such that $y'(\tilde x) = 0$.
    
    Now, let $x_1, x_2 \in [-1, 1]$ be arbitrary. Since $y' \in C([-1,1])$, there is a point $\bar x\in [-1,1]$ with 
    $\|y'\|_{L^\infty(-1,1)} = |y'(\bar x)|$. Consequently we obtain
    \begin{equation*}
    \begin{aligned}
        |y(x_2) - y(x_1)|
        & \leq  \|y'\|_{L^\infty(-1,1)}\, |x_2-x_1| 
        = \big|y'(\bar x) - y'(\tilde x)\big|\, |x_2-x_1|\\
        &\leq \int_{-1}^1 |y''(x)| \di x\, |x_2-x_1|
        \leq \big( \|u\|_{L^1(-1,1)} + 2\big)\, |x_2-x_1|,
    \end{aligned}
    \end{equation*}
    where we used that $|q| \leq 1$ a.e.\ in $(-1,1)$ for the last estimate.
\end{proof}

With this result at hand we are in the position to verify the conditions of Theorem \ref{theo:sec_order_condition}. 
As we have already checked the assumptions of Theorem~\ref{theo:strong_stationarity}, verified that $\MM = \emptyset$
and seen that the strong stationarity system is fulfilled, we only need to prove that 
there is an $\epsilon > 0$ such that
\begin{equation}\label{eq1_sec_order_conditions}
\langle\bar{\mu}, S(u) -\bar{y}\rangle-\langle Q(u)-\bar{q}, \bar{p}\rangle\geq 0
\end{equation}
holds for all $u\in L^2(-1,1)$ with $\|u - \bar u\|_{L^2(-1,1)}\leq \epsilon$.
For this purpose let $u\in L^2(-1,1)$ be arbitrary and set $y := S(u)$ and $q:=Q(u)$.
By construction we have
\begin{equation}\label{eq2_sec_order_conditions}
\langle\bar{\mu}, y-\bar{y}\rangle-\langle q-\bar{q}, \bar{p}\rangle = 2m \,y(0)+\int_{-1}^1 \bar{p}(x)[1-q(x)]\, \di x.
\end{equation}
We distinguish the following two cases:
\begin{itemize}
\item[i)] $y(0)\geq 0$:\\
For our example it holds $\bar{p}(x)\geq 0$ for all $x\in [-1,1]$. Therefore, since $|q(x)|\leq 1$ a.e. in $(-1,1)$
by Lemma~\ref{lem:slack}, the right-hand side of \eqref{eq2_sec_order_conditions} is non-negative and \eqref{eq1_sec_order_conditions} is satisfied.
\item[ii)] $y(0) < 0$:\\
Since $y$ is Lipschitz continuous by Lemma~\ref{lem:ylip}, it is negative in a neighborhood of zero 
and, as we know the Lipschitz constant, we can estimate this neighborhood: Assume that $\delta \in [-1,1]$ 
is the root of $y$, which is closest to $0$. Then Lemma~\ref{lem:ylip} gives 
\begin{equation*}
   |y(0)| = |y(0) - y(\delta)| \leq L_u |\delta|.
\end{equation*}

Hence, $y$ is negative on the interval $(-\delta, \delta)$ with $\delta\geq |y(0)|/L_u$ and thus, by Lemma~\ref{lem:slack}, 
we have $q =-1$ on $(-\delta, \delta)$. Moreover, by construction, $\bar{p}(x)\geq \beta$ for all $x\in (-\delta, \delta)$
provided that $\delta$ is sufficiently small. Therefore, we obtain the estimate
\begin{align*}
\int_{-1}^1\bar{p}(x)[1-q(x)]\,\mathrm dx + 2m\, y(0) 
&\geq \int_{-\delta}^{\delta}\bar{p}(x)[1-q(x)]\,\mathrm dx - 2m\, |y(0)|\\
& \geq 4\,\beta\,\delta -2m\, |y(0)| \geq 2\Big(2\,\frac{\beta}{L_u}-m\Big) |y(0)|.
\end{align*}
 Thus, if 
\begin{equation}
    \beta\geq \frac{1}{2}\,L_u\,m,
\end{equation}
then the right-hand side of \eqref{eq2_sec_order_conditions} is non-negative.
\end{itemize}

In order to estimate the Lipschitz constant we use the triangle and H\"older's inequality to obtain
\begin{equation*}
    \|u\|_{L^1(-1,1)}
    \leq \|\bar u\|_{L^1(-1,1)} + \sqrt{2} \| u - \bar u\|_{L^2(-1,1)}.
\end{equation*}
Hence, for all $u \in L^2(\Omega)$ with $\| u - \bar u\|_{L^2(-1,1)}\leq \epsilon$ there holds
\begin{equation*}
    L_u = \|u\|_{L^1(-1,1)} + 2 \leq \|\bar u\|_{L^1(-1,1)} + \sqrt{2} \,\epsilon + 2.
\end{equation*}
Therefore, if 
\begin{equation}\label{eq:lipconst}
    \beta\geq \frac{1}{2}\, m \big(\|\bar u\|_{L^1(-1,1)} + \sqrt{2} \,\epsilon + 2\big),
\end{equation}
then condition \eqref{eq1_sec_order_conditions} is satisfied.
Now, it is easy to see that $\bar u \geq 0$, if $\alpha \leq 11/528$, so that one obtains 
\begin{equation*}
    \|\bar u\|_{L^1(-1,1)} = \int_{-1}^1 \bar u(x) \,\di x = 68\,\alpha + 2.
\end{equation*}
Finally, we end up with the following conditions for the parameters $\alpha$, $\beta$, and $m$:
\begin{equation}\label{eq:paramcond}
    \alpha \leq \frac{11}{528} \quad \text{and}\quad 
    \beta\geq \frac{1}{2}\, m \big(68\,\alpha + \sqrt{2} \,\epsilon + 4\big).
\end{equation}
Herein $\epsilon > 0$ can be chosen. The larger it is chosen, the bigger the neighborhood of local optimality of $\bar u$
becomes.

We observe that the weak derivatives of $\bar{p}$ and $\bar{u}$ are piecewise smooth functions, 
see also Figure~\ref{fig:exact_sol}.
\begin{figure}[h]
\centering
\begin{subfigure}[b]{0.32\textwidth}
        \includegraphics[width=\textwidth]{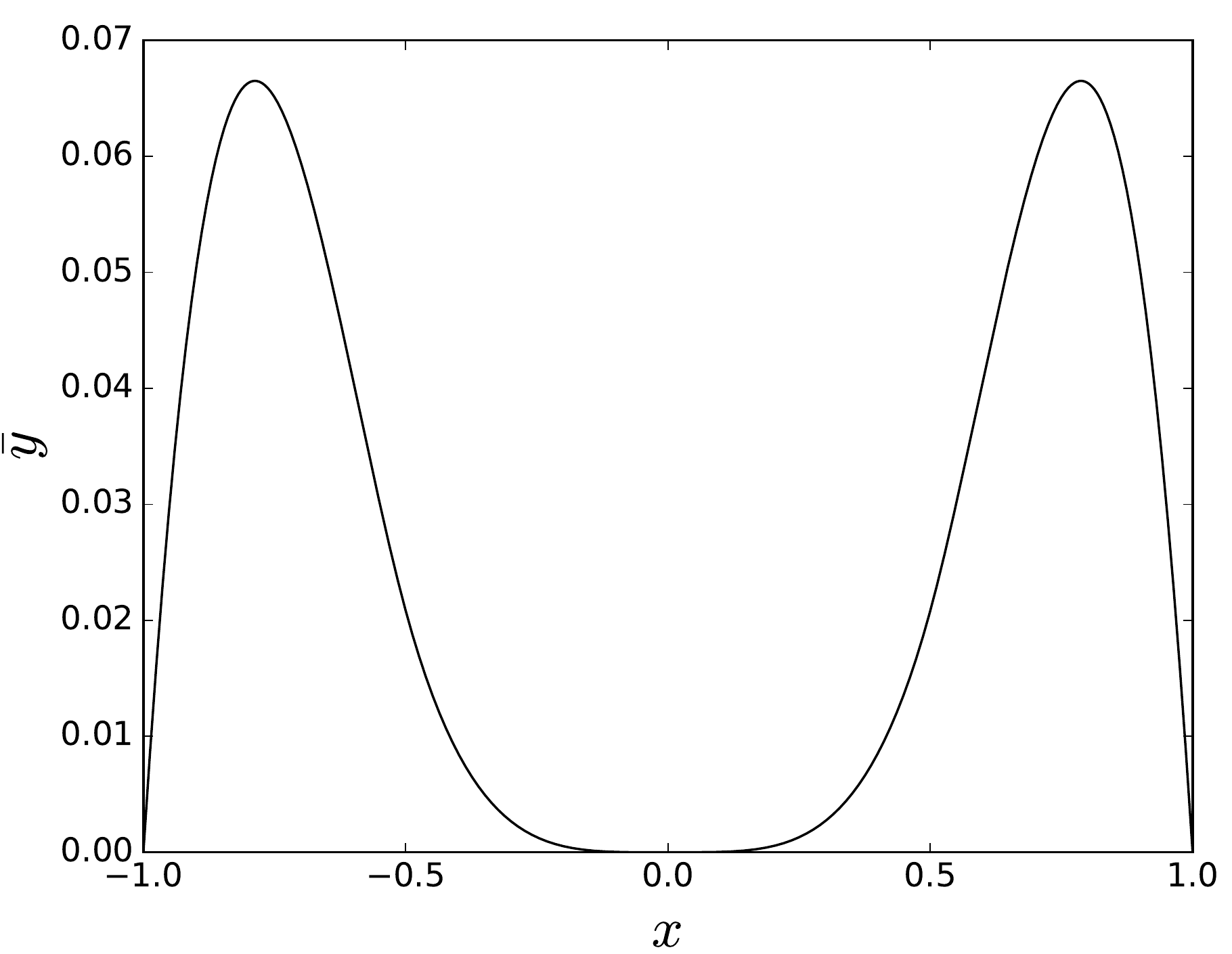}
       
    \end{subfigure}
    \
    \begin{subfigure}[b]{0.32\textwidth}
        \includegraphics[width=\textwidth]{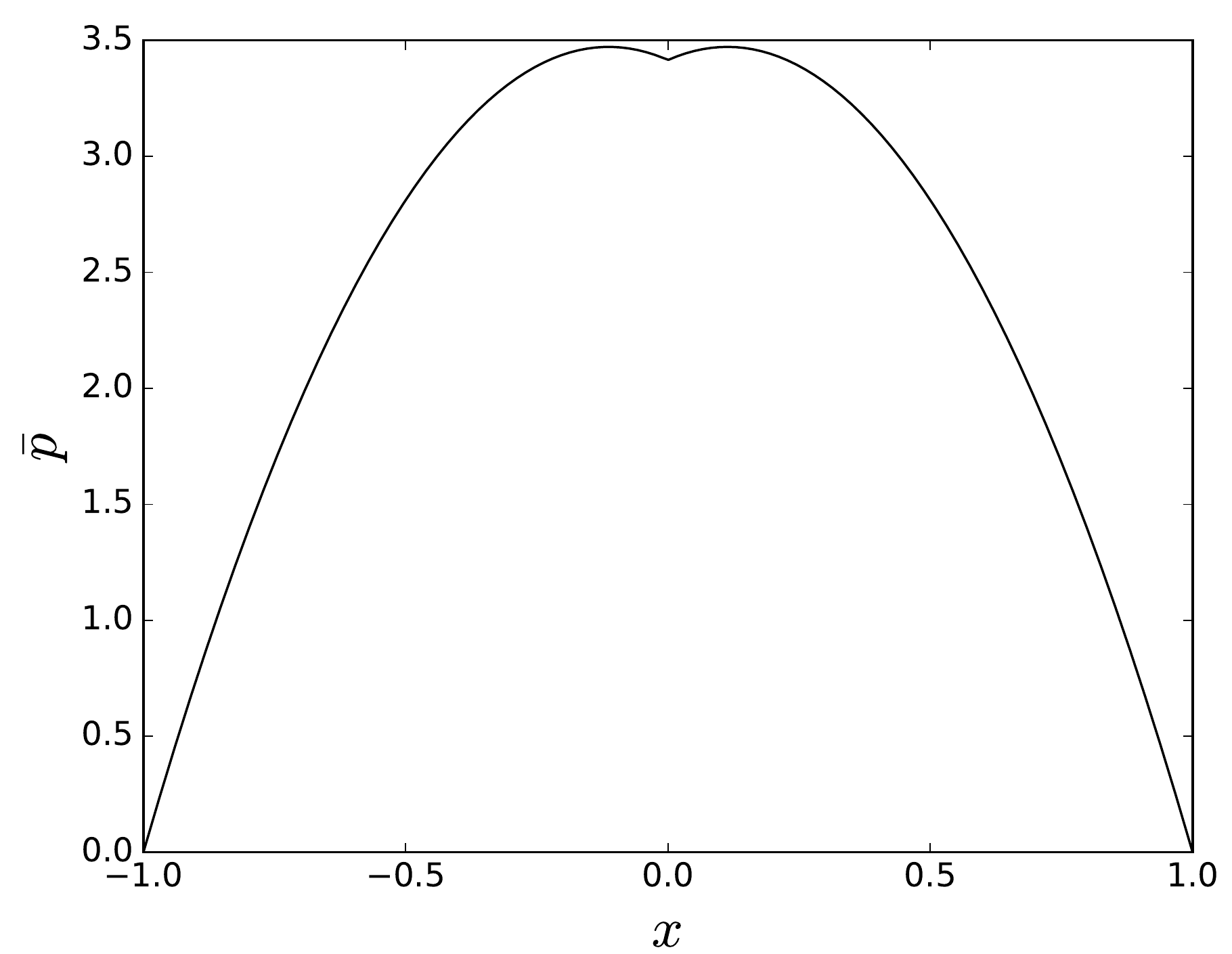}
   
    \end{subfigure}
    \
    \begin{subfigure}[b]{0.32\textwidth}
        \includegraphics[width=\textwidth]{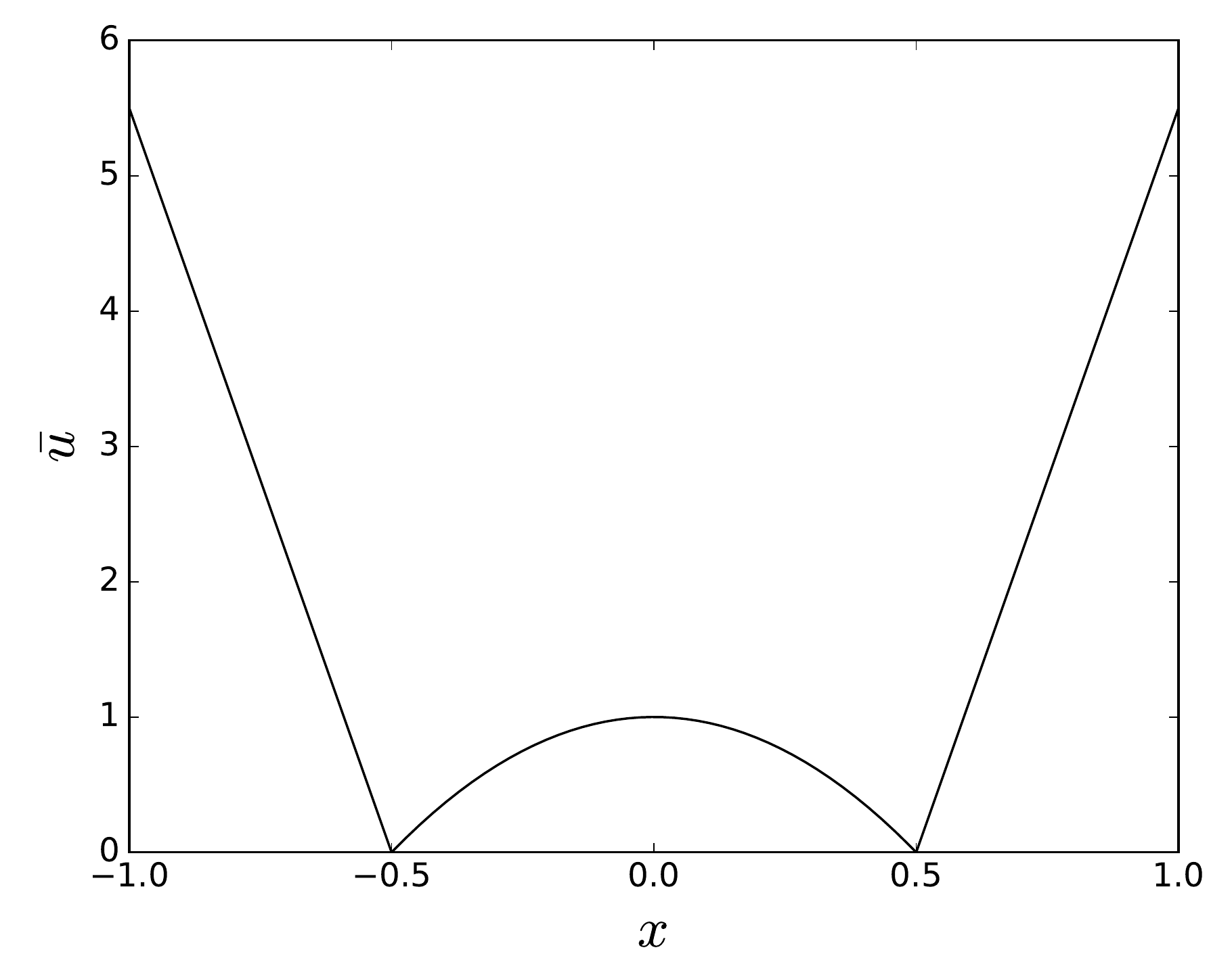}
       
    \end{subfigure}
    \caption{The state $\bar{y}$ (left), the adjoint state $\bar{p}$ (middle) and the control $\bar{u}$ (right)
     for the parameter setting in \eqref{eq:params}.}\label{fig:exact_sol}
\end{figure} 
According to \cite[Section~6.10]{Dobrowolski}, this implies that $\bar{p},\bar{u}\in H^{3/2-\varepsilon}(\Omega)$ 
for every $\varepsilon>0$ as indicated above. 
Compared to the case with a (smooth) elliptic equation, we thus obtain a substantial reduction of the regularity of the adjoint state. 
To be more precise, if one would consider the optimal control of Poisson's equations with our data (i.e., $y_d$ and $u_d$
as given in \eqref{eq:yd} and \eqref{eq:ud}, respectively), then 
the state and the adjoint would be elements of $H^2(\Omega)$.
Nevertheless, the regularity of the adjoint state in our example is still better than the minimum regularity 
guaranteed by the necessary optimality conditions in Theorem~\ref{theo:strong_stationarity}, which only 
yield $\bar p\in W_{\bar y}$, which is just $H^1_0(\Omega)$ here, since $\MM = \emptyset$.

\section{Numerical Results}\label{sec:numerics}

Let us report on the numerical results of the one-dimensional example constructed in the previous section. We choose the parameters 
\begin{equation}\label{eq:params}
m=1, \quad \alpha=\frac{11}{528},\quad \beta=\frac{1}{2}m(68\alpha +\sqrt{2}+4),\quad \nu=1
\end{equation}
so that the requirements in \eqref{eq:paramcond} are met with $\epsilon = 1$. 
The discrete optimal control problem is solved numerically by a regularization of the sub\-differential  $\partial\|\cdot\|_{L^1(\Omega)}$.  As before, we use the function
\begin{equation*}
\beta_\gamma(v)=\frac{2}{\pi}\arctan(\gamma v)
\end{equation*}
as regularization, i.e., we solve the regularized optimal control problem \eqref{prob_P_gamma}. Here $\gamma$ is the regularization and smoothing parameter. In the appendix we show that
\begin{equation*}
\bar{y}_\gamma\to \bar{y}\quad \text{in}\ H_0^1(\Omega),\qquad \bar{u}_\gamma \to \bar{u}\quad \text{in}\ L^2(\Omega)
\end{equation*} 
as $\gamma$ tends to infinity (cf. Theorem \ref{theo_conv_to_original_sol} and Theorem \ref{theo_strong_conv_P_gamma}). For further details concerning the regularization we refer to the appendix. This regularization leads to a smooth optimization problem and thus, the arising problem can be solved with Newton's method. Note that the regularity of the problem progressively decreases as $\gamma$ is increased. Consequently it is difficult to solve the problem numerically for large values of $\gamma$. Therefore, we first compute a solution of \eqref{prob_P_gamma} for $\gamma=1$. The initial values for Newton's method are set to $y_0=0$, $p_0=0$ and $u_0=1$. Afterwards we increase $\gamma$ by the rule $\gamma=1.5\cdot \gamma$ and solve again problem \eqref{prob_P_gamma}. As initial values for Newton's method we use the previously computed solution. This process is repeated until $\gamma=10^{-20}$. 

 Figure \ref{fig:approx_sol} illustrates the numerical solution for $h=1/100$ and $\gamma=10^{-20}$. We point out that even the kink of the adjoint state is visible in the approximation. Moreover, if we look closely at $\bar{p}_h$ we even see the effect of the regularization.

\begin{figure}[h]
\centering
\begin{subfigure}[b]{0.32\textwidth}
        \includegraphics[width=\textwidth]{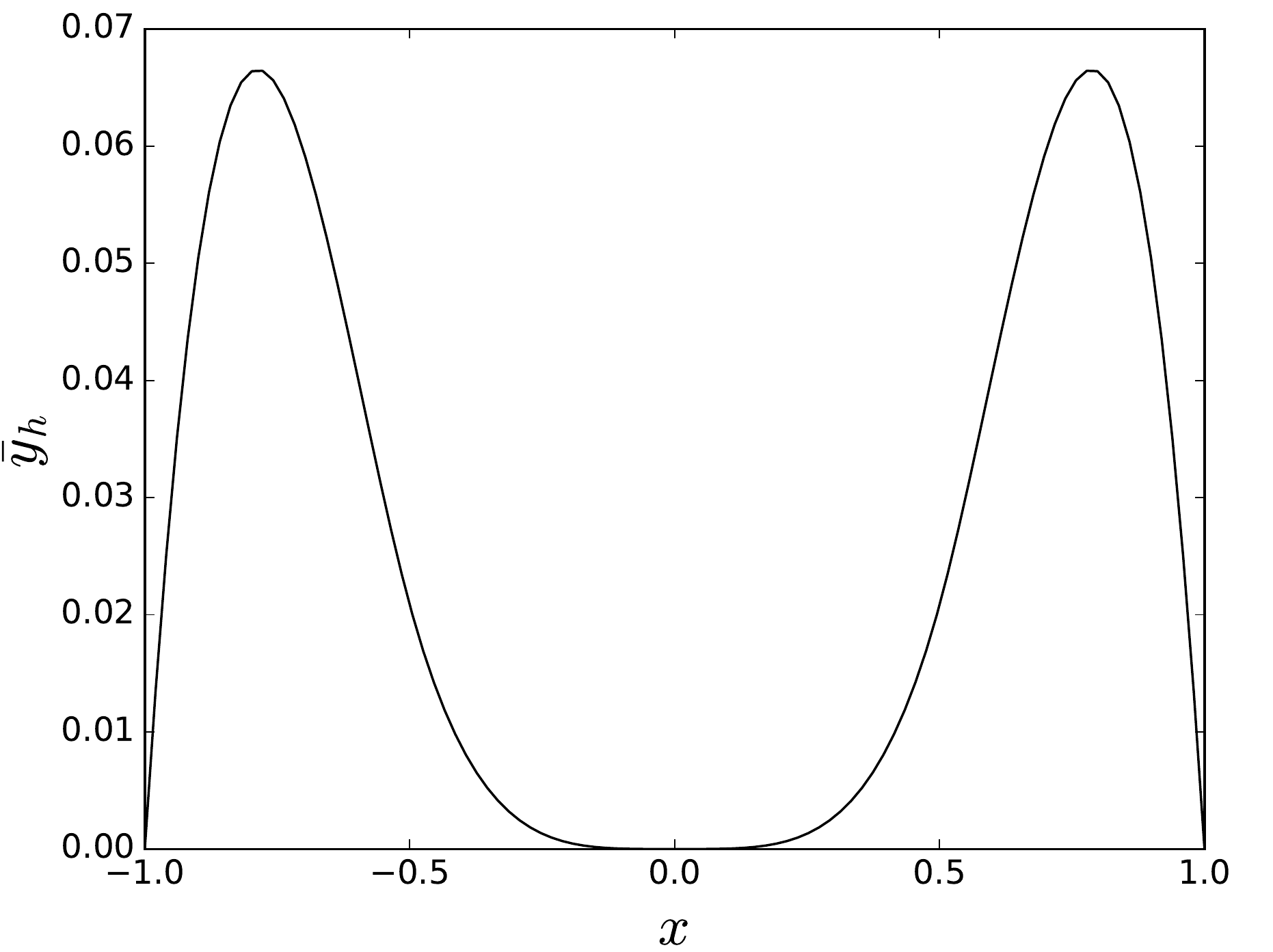}
       
    \end{subfigure}
    \
    \begin{subfigure}[b]{0.32\textwidth}
        \includegraphics[width=\textwidth]{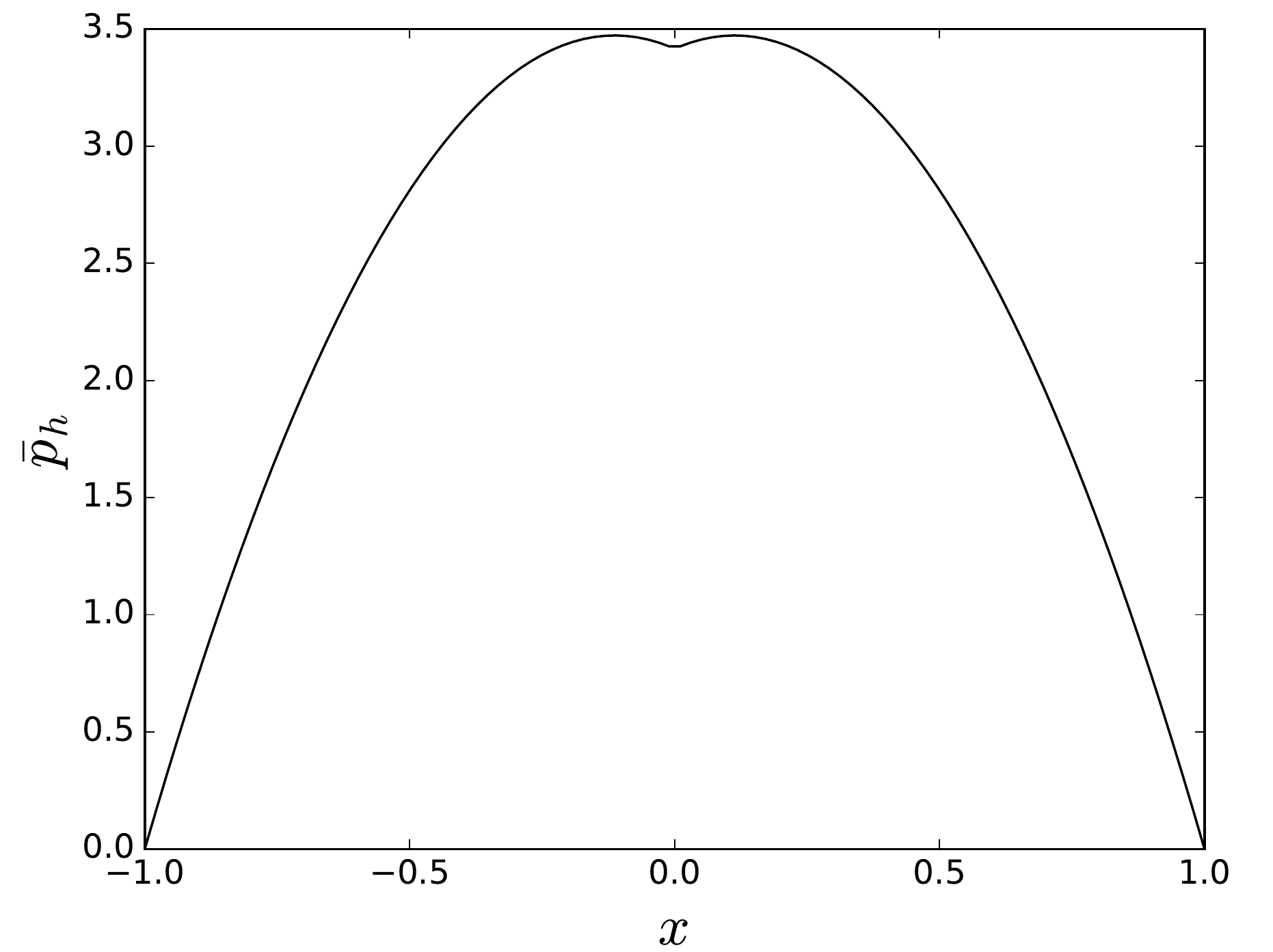}
   
    \end{subfigure}
    \
    \begin{subfigure}[b]{0.32\textwidth}
        \includegraphics[width=\textwidth]{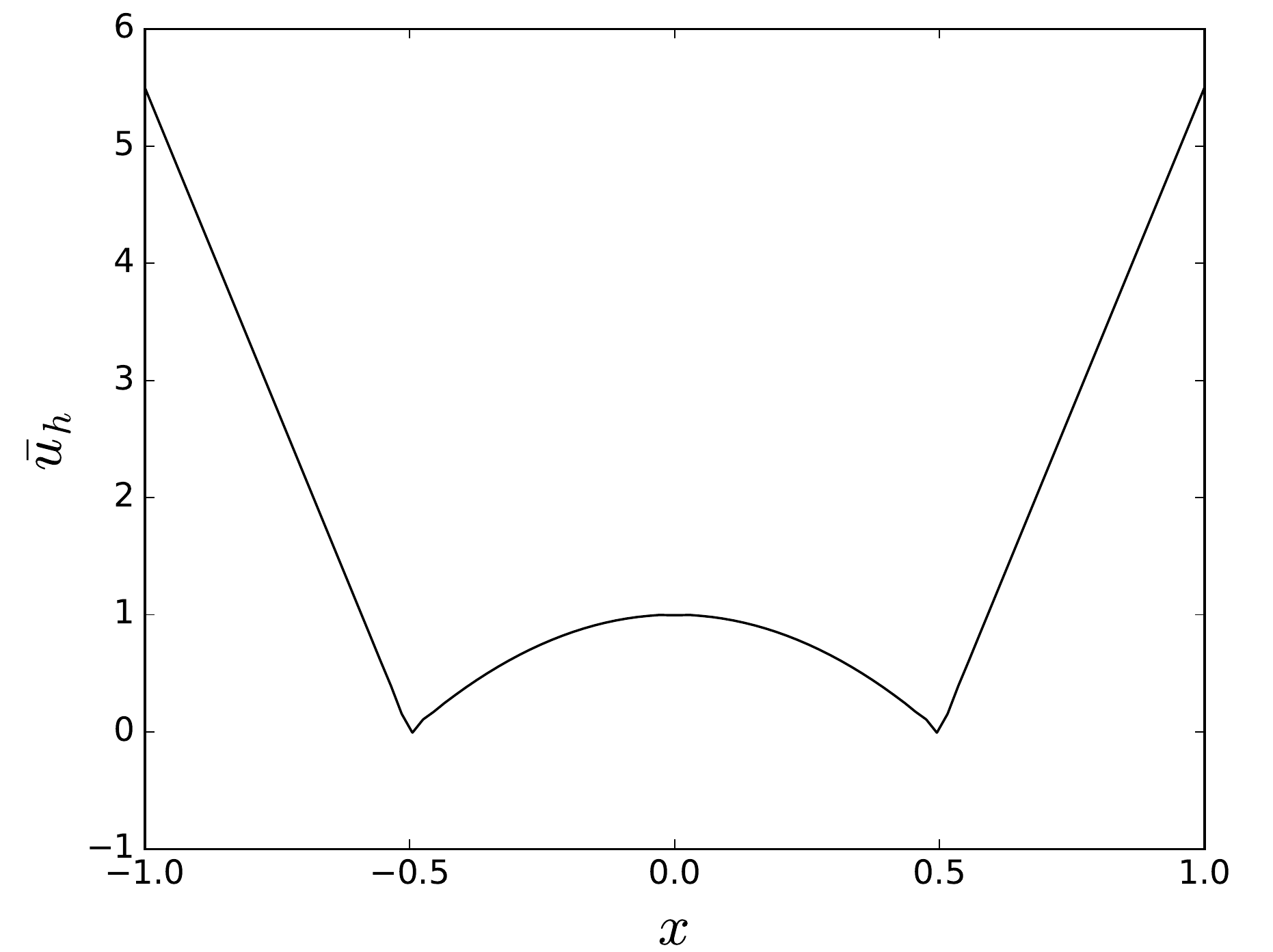}
       
    \end{subfigure}
    \caption{The state $\bar{y}_h$ (left), the adjoint state $\bar{p}_h$ (middle) and the control $\bar{u}_h$ (right) for $h=\frac{1}{100}$.}\label{fig:approx_sol}
\end{figure} 

The overall error consists of two contributions, namely the regularization error and the discretization error. We observe in Figure \ref{fig:saturation} that for a fixed mesh size $h=1/n$ the discretization error dominates provided that $\gamma$ is sufficiently large.
\begin{figure}[h]
\centering
\begin{subfigure}[b]{0.32\textwidth}
        \includegraphics[width=\textwidth]{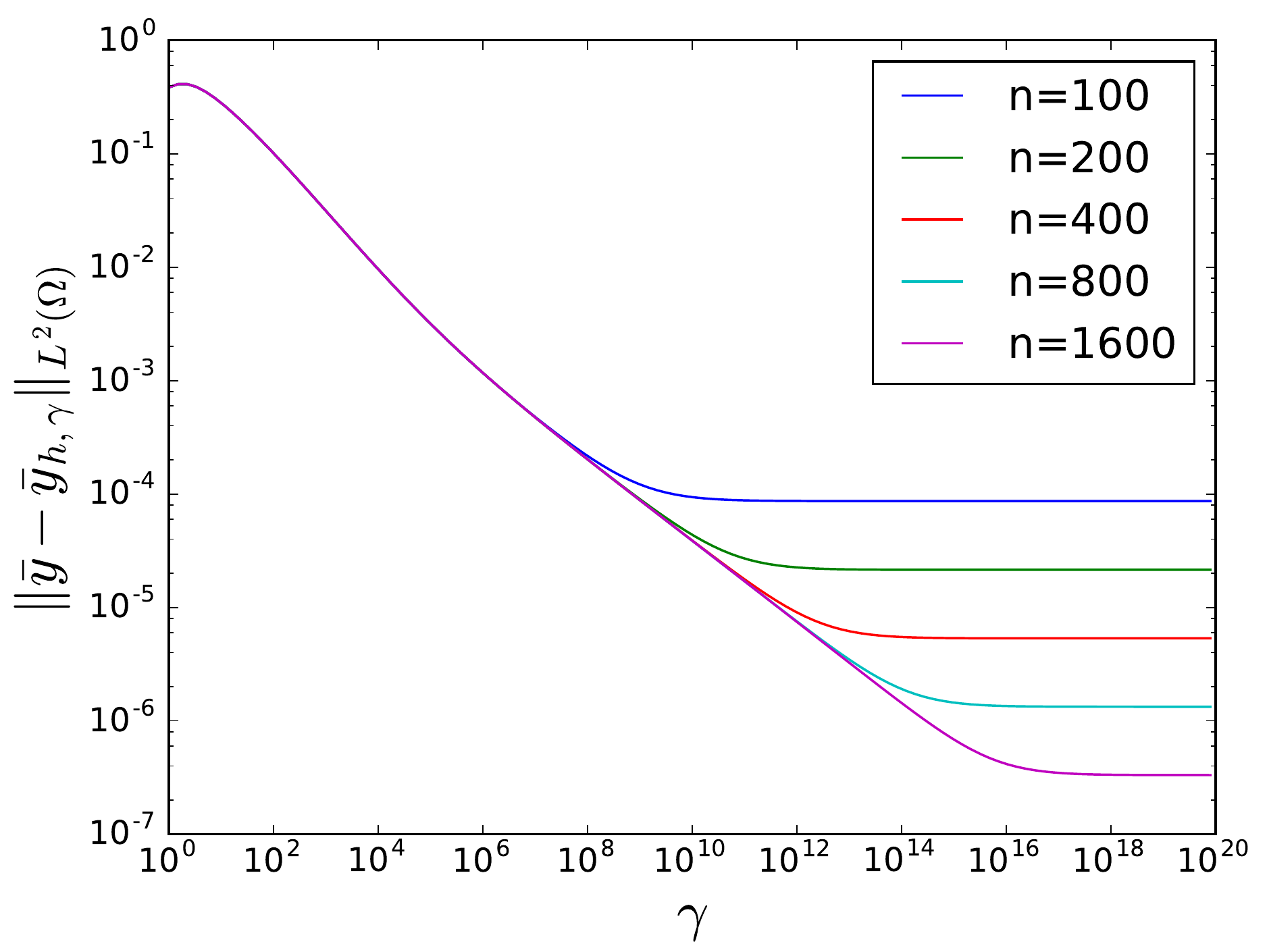}
       
    \end{subfigure}
    \
    \begin{subfigure}[b]{0.32\textwidth}
        \includegraphics[width=\textwidth]{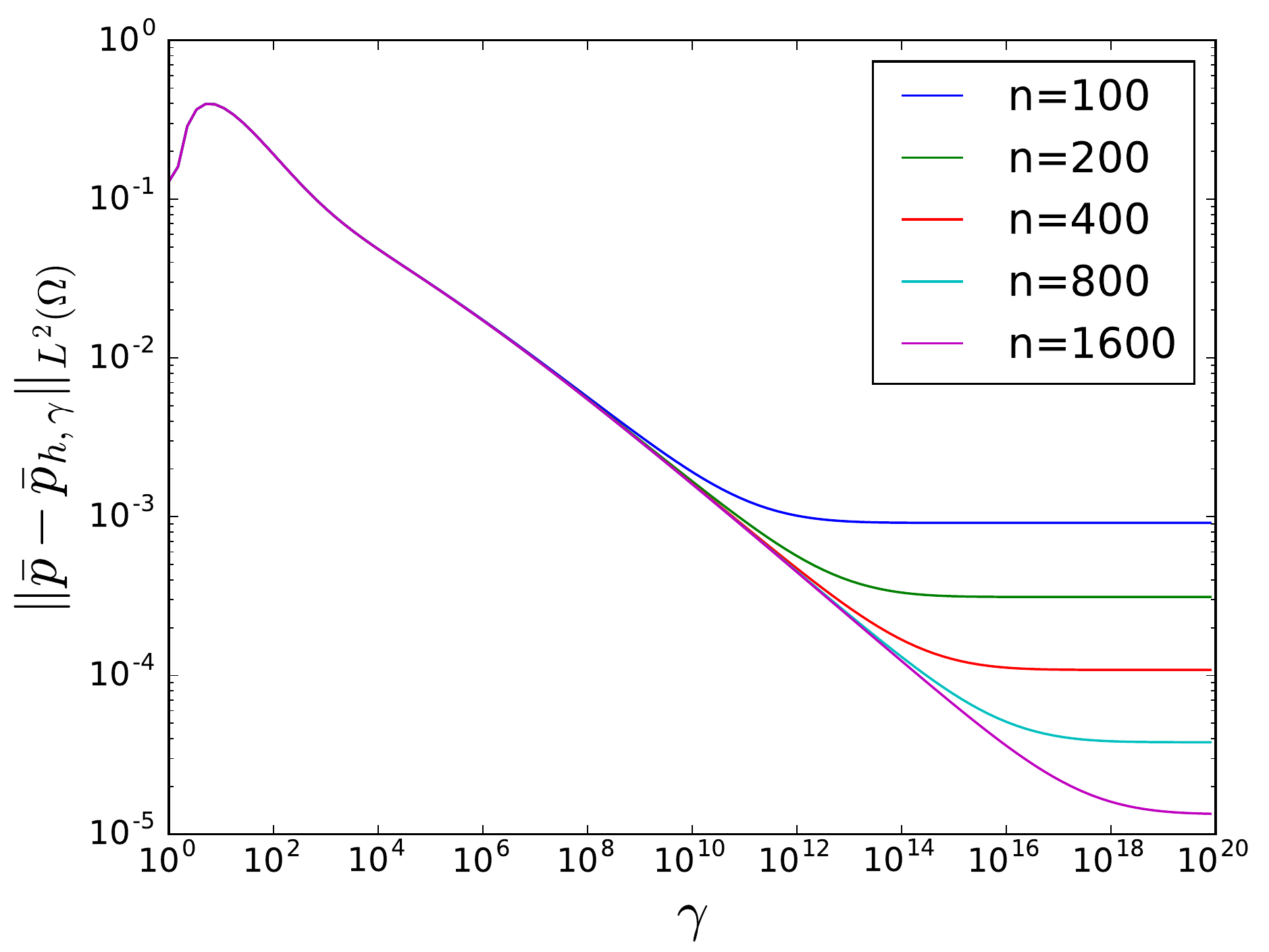}
   
    \end{subfigure}
    \
    \begin{subfigure}[b]{0.32\textwidth}
        \includegraphics[width=\textwidth]{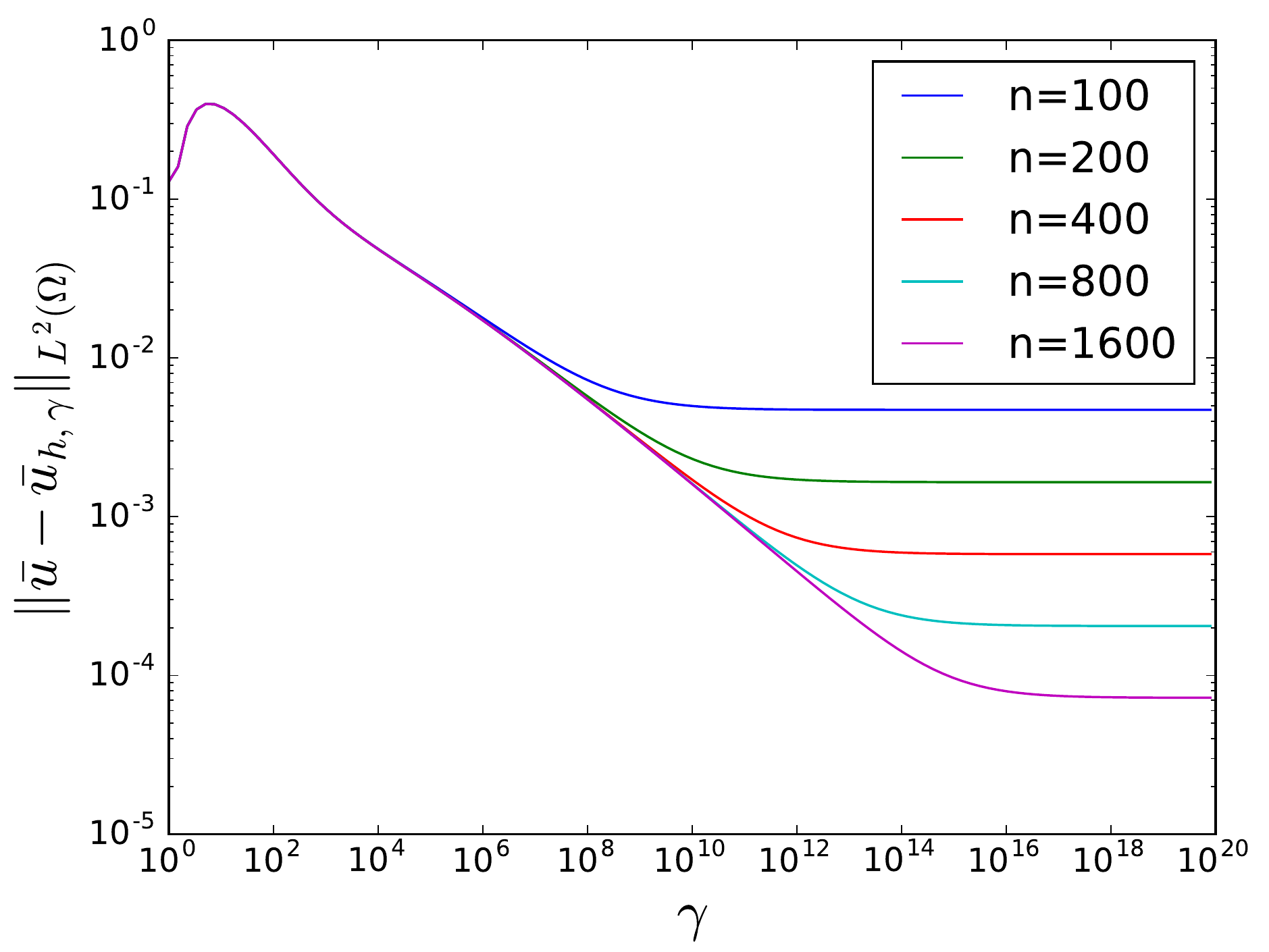}
       
    \end{subfigure}
    \caption{Regularization error for the state (left), the adjoint state (middle) and the control (right) on different meshes.}\label{fig:saturation}
\end{figure} 

The $L^2$-errors for this example are depicted in Figure \ref{fig:l2-error} for the choice $\gamma=10^{-20}$.

\begin{figure}[h]
\centering
\begin{subfigure}[b]{0.32\textwidth}
        \includegraphics[width=\textwidth]{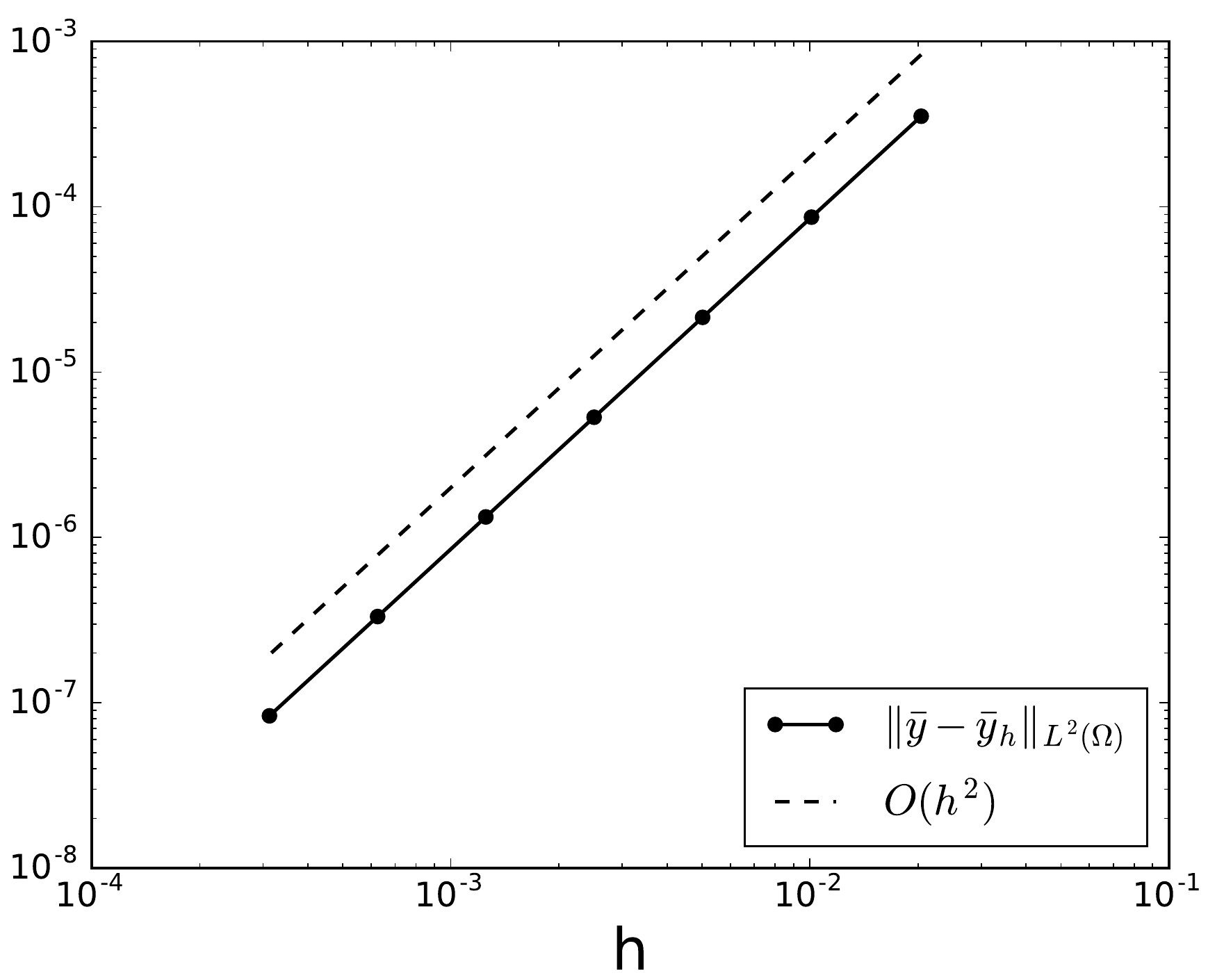}
       
    \end{subfigure}
    \
    \begin{subfigure}[b]{0.32\textwidth}
        \includegraphics[width=\textwidth]{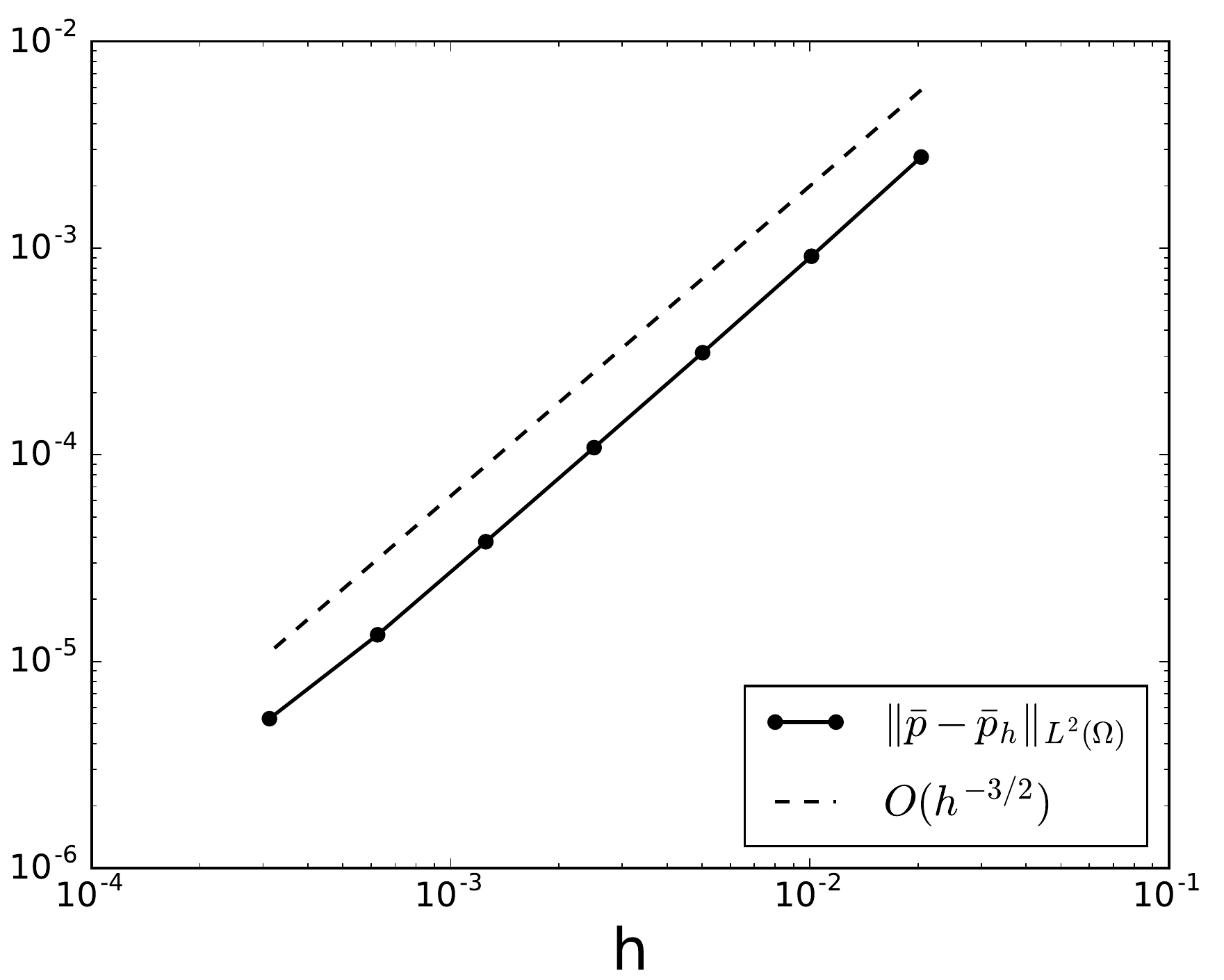}
   
    \end{subfigure}
    \
    \begin{subfigure}[b]{0.32\textwidth}
        \includegraphics[width=\textwidth]{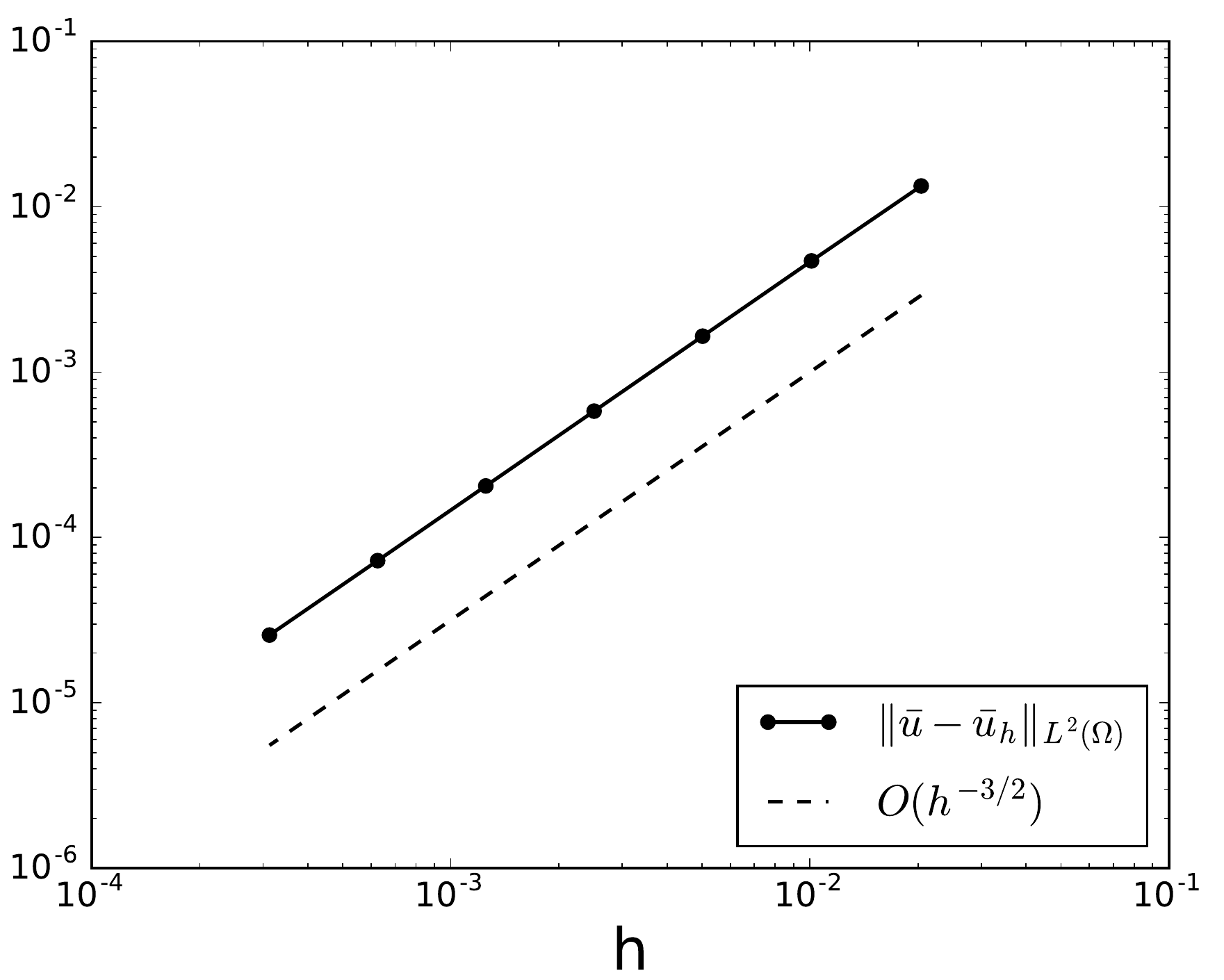}
       
    \end{subfigure}
    \caption{$L^2$-error for the state (left), for the adjoint state (middle) and for the control (right).}\label{fig:l2-error}
\end{figure} 

\begin{table}
  \centering
\begin{tabular}{p{1.4cm} p{1.8cm} p{1.4cm} p{1.8cm} p{1.4cm} p{1.8cm} p{1.4cm}}
$h$ & $e(\bar{y},h)$ & $EOC_{\bar{y}}$ & $e(\bar{p},h)$ & $EOC_{\bar{p}}$ & $e(\bar{u},h)$ & $EOC_{\bar{u}}$\\
\hline
1/50 & 3.5320e-04 & \quad- & 2.7605e-03 & \quad- & 1.3373e-02 & \quad -\\
1/100 & 8.6626e-05 & 1.9983 & 9.1383e-04  & 1.5719 & 4.7088e-03 & 1.4841\\
1/200 & 2.1401e-05 & 1.9993 & 3.1204e-04 & 1.5390 & 1.6505e-03 & 1.5016 \\
1/400 & 5.3368e-06 & 1.9997 & 1.0836e-04 & 1.5203 & 5.8102e-04 &  1.5008\\
1/800 & 1.3310e-06 & 1.9999 & 3.7974e-05  & 1.5101 & 2.0498e-04 & 1.5004\\
1/1600 & 3.3239e-07 & 1.9997 & 1.3459e-05 & 1.4951 & 7.2410e-05 & 1.4999\\
\hline
\end{tabular}
\caption{Experimental order of convergence.}\label{table:EOC}
\end{table}

In order to verify our theoretical convergence rate, we compute the experimental order of convergence which for the control is given by
\begin{equation*}
EOC_{\bar{u}}:=\frac{\log(e(\bar{u},h_1))-\log(e(\bar{u},h_2))}{\log(h_1)-\log(h_2)}, 
\end{equation*}
where $h_1$ and $h_2$ denote two consecutive mesh sizes and $e(\bar{u},h)$ is the error in the $L^2$-norm, i.e.
\begin{equation*}
e(\bar{u},h):=\|\bar{u}-\bar{u}_h\|_{L^2(\Omega)}.
\end{equation*} 
$EOC_{\bar{y}}$ and $EOC_{\bar{p}}$ are defined analogously.
Table \ref{table:EOC} shows the experimental order of convergence for this example. One can see that $EOC_{\bar{y}}$ is approximately two which is in agreement with our theoretical result in Theorem \ref{theo_Nochetto2}. Moreover, we observe that $EOC_{\bar{p}}$ and $EOC_{\bar{u}}$ are approximately $3/2$ which is better than predicted by our theory 
(cf.\ Theorem~\ref{conv_rates}). 
The experimental rate of $3/2$ is however in line with the regularity of the adjoint state in this example.
In order to see the predicted convergence rate of Theorem \ref{conv_rates} 
we would have to construct a test example such that $\bar{p}, \bar{u}\in H^1(\Omega)$, 
but $\bar{p}, \bar{u}\notin H^{1+\epsilon}(\Omega)$ for every $\epsilon>0$. 
In \cite{MeyerThoma2013}, this was done for the optimal control of the obstacle problem, which is significantly easier to handle 
due to the simpler structure of the second-order sufficient conditions in this case. 
We expect that the construction of an example with minimal regularity requires to turn to higher spatial dimensions 
which gives rise to future research.

\begin{appendices}
\appendix
\renewcommand{\thesection}{Appendix \Alph{section}}
\renewcommand{\thesubsection}{\Alph{section}.\arabic{subsection}}
\renewcommand{\theequation}{\Alph{section}.\arabic{equation}}
\renewcommand{\thetheorem}{\Alph{section}.\arabic{theorem}}

\section{$\boldsymbol{L^\infty}$-Error Estimates for the State}\label{sec:Appendix}
This section is devoted to the derivation of the $L^\infty$-error estimate for the variational inequality \eqref{VI} used in the error analysis of Section \ref{sec:error_analysis}. The proof is an adaptation of the technique introduced in \cite{Nochetto1988} and is based on a regularization of the variational inequality. The main difference from the analysis presented in \cite{Nochetto1988} is that we have the generic regularity $u\in H^1(\Omega)$, whereas in \cite{Nochetto1988} $u\in L^\infty(\Omega)$ is assumed.\\
We will first establish $L^\infty$-error estimates for the regularized problem as well as for its discretization. Based on these results we finally prove an error estimate for the original problem.

\subsection{Error Estimates for the Regularized Variational Inequality}\label{sec_reg_VI}
We now explain a regularization procedure for \eqref{VI}. Note first that problem \eqref{VI} can be rewritten as
\begin{align}\label{eq:pdey}
y\in H_0^1(\Omega),\quad a(y,v)+(\beta(y),v)=\langle u,v\rangle\quad \forall v\in H_0^1(\Omega)
\end{align}

with $\beta(y)\in\partial\|\cdot\|_{L^1(\Omega)}(y)$, where $\partial\|\cdot\|_{L^1(\Omega)}$ denotes the subdifferential of $H_0^1(\Omega)\ni y\mapsto \|y\|_{L^1(\Omega)}\in\R$. For the regularization of \eqref{VI} we substitute $\beta$ by a bounded and globally smooth function $\beta_\gamma$. We use the function
\begin{equation*}
\beta_\gamma(s):=\frac{2}{\pi}\arctan(\gamma s).
\end{equation*}

The regularized problem of \eqref{VI} is then given by
\begin{equation}\label{eq:pdegamma}
y_\gamma\in H_0^1(\Omega),\quad a(y_\gamma,v)+(\beta_\gamma(y_\gamma),v)=\langle u,v\rangle\quad \forall v\in H_0^1(\Omega).
\end{equation}

In the following we collect some properties of the regularized problem. We start with a regularity result.
\begin{lemma}\label{lemma_reg_VI_regularity}
For every $u\in H^{1}(\Omega)$ the solution $y_\gamma$ of the regularized problem \eqref{eq:pdegamma} satisfies $y_\gamma\in W^{1,\infty}(\Omega)$.
\end{lemma}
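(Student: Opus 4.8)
The plan is to exploit that the regularization $\beta_\gamma$ is \emph{globally bounded}, which decouples the regularity of the nonlinear term from that of $y_\gamma$ and thereby avoids any bootstrapping. First I would record the pointwise bound $|\beta_\gamma(s)| = \tfrac{2}{\pi}|\arctan(\gamma s)| \leq 1$ for all $s\in\R$, so that, independently of the solution itself, $\beta_\gamma(y_\gamma)\in L^\infty(\Omega)$. Rewriting \eqref{eq:pdegamma} as the Poisson problem $-\Delta y_\gamma = u - \beta_\gamma(y_\gamma) =: f$ with homogeneous Dirichlet data, the task reduces to determining the integrability of the right-hand side $f$ and then invoking elliptic regularity.

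Next I would identify the integrability of $f$. Since $\beta_\gamma(y_\gamma)\in L^\infty(\Omega)\subset L^p(\Omega)$ for every $p$, the integrability of $f$ is governed entirely by $u$. By the Sobolev embedding of $H^1(\Omega)$ one has $u\in L^p(\Omega)$ for all $p<\infty$ if $d\leq 2$, and for $p\leq 6$ if $d=3$. In each case I would fix an exponent $p$ with $p>d$ that is admissible in Assumption~i): any $p\in(1,\infty)$ for $d=1$, any $p\in(2,\infty)$ for $d=2$, and $p=6$ for $d=3$. Then $f\in L^p(\Omega)$, and the $W^{2,p}$-regularity of the domain granted by Assumption~i), applied exactly as in Lemma~\ref{lem:slack}, yields $y_\gamma\in W^{2,p}(\Omega)$.

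Finally I would conclude by a Sobolev embedding argument. Since $p>d$, each first derivative $\partial_i y_\gamma\in W^{1,p}(\Omega)$ embeds continuously into $L^\infty(\Omega)$ by Morrey's inequality, and likewise $y_\gamma\in L^\infty(\Omega)$; hence $y_\gamma\in W^{1,\infty}(\Omega)$, as claimed.

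The only delicate point is the bookkeeping of the exponents in the critical dimension $d=3$: there $H^1(\Omega)$ embeds merely into $L^6(\Omega)$, so the largest admissible integrability of $f$ is $p=6$, while the final embedding $W^{2,p}\hookrightarrow W^{1,\infty}$ requires precisely $p>d=3$. The strict inequality $6>3$ leaves just enough room, and the regularity hypothesis on $\Omega$ in Assumption~i) is tailored to permit $p=6$ in three dimensions. This is exactly what makes the argument go through without imposing any regularity on $u$ beyond $H^1(\Omega)$, and it is the reason the statement is phrased for the generic regularity $u\in H^1(\Omega)$ rather than $u\in L^\infty(\Omega)$ as in \cite{Nochetto1988}.
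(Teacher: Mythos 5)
Your proof is correct and follows essentially the same route as the paper's: the uniform bound $\|\beta_\gamma(y_\gamma)\|_{L^\infty(\Omega)}\leq 1$ independent of $\gamma$, the Sobolev embedding $H^1(\Omega)\hookrightarrow L^p(\Omega)$ for $u$, the $W^{2,p}$-regularity of $\Omega$ from Assumption~i), and finally the embedding $W^{2,p}(\Omega)\hookrightarrow W^{1,\infty}(\Omega)$ for $p>d$. The paper nominally calls this a bootstrapping argument, but the single elliptic-regularity estimate it writes down is exactly your one-step argument, so the difference is purely one of presentation.
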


\begin{proof}
We apply a boot strapping argument. By construction $\beta_\gamma\in L^\infty(\Omega)$ with a norm independent of $\gamma$. Moreover, we have the continuous embeddings $u\in H^1(\Omega)\hookrightarrow L^p(\Omega)$ for all $p<\infty$, if $d\leq 2$, and $u\in H^1(\Omega)\hookrightarrow L^p(\Omega)$ for all $p\leq 6$, if $d=3$. Since our domain $\Omega$ is assumed to be $W^{2,p}$-regular, it follows via boot strapping that
\begin{equation*}
\|y_\gamma\|_{W^{2,p}(\Omega)}\leq C\left( \|\beta_\gamma(y_\gamma)\|_{L^\infty(\Omega)}+\|u\|_{L^p(\Omega)}\right)\leq C\left(1+\|u\|_{H^1(\Omega)}\right),
\end{equation*}
which implies the desired regularity of $y_\gamma$.
\end{proof}

The following result shows that the solution of the regularized problem converges to the solution of the original problem.

\begin{theorem}\label{theo_conv_to_original_sol}
Let $\{u_\gamma \} \subset H^{-1}(\Omega)$ be a sequence such that $u_\gamma \to u$ in $H^{-1}(\Omega)$
as $\gamma \to \infty$. Denote the solutions of \eqref{eq:pdegamma} associated with $u_\gamma$ by $y_\gamma$, 
while $y$ is the solution of the VI in \eqref{eq:pdey}. 
Then $y_\gamma\to y$ in $H_0^1(\Omega)$ as $\gamma\to \infty$.
\end{theorem}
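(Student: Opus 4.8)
The plan is to follow the classical regularization/compactness scheme for monotone problems: first establish a uniform $H_0^1$-bound on $y_\gamma$, then extract a weak limit, identify this limit as the solution of the original VI by appealing to the uniqueness in Lemma~\ref{lemma_Lipschitz}, and finally upgrade the weak convergence to strong convergence using the monotonicity of $\beta_\gamma$. Throughout, the two limit processes (the regularization $\gamma\to\infty$ and the data convergence $u_\gamma\to u$) are handled simultaneously.

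First I would derive the uniform bound. Testing \eqref{eq:pdegamma} with $v=y_\gamma$ and using that $\beta_\gamma(s)\,s=\frac{2}{\pi}\arctan(\gamma s)\,s\geq 0$ for all $s\in\R$, whence $(\beta_\gamma(y_\gamma),y_\gamma)\geq 0$, together with coercivity \eqref{coercivity} yields $c\|y_\gamma\|_{H^1(\Omega)}^2\leq \langle u_\gamma,y_\gamma\rangle\leq \|u_\gamma\|_{H^{-1}(\Omega)}\|y_\gamma\|_{H^1(\Omega)}$. Since $u_\gamma\to u$ in $H^{-1}(\Omega)$, the sequence $\{u_\gamma\}$ is bounded, and hence $\{y_\gamma\}$ is bounded in $H_0^1(\Omega)$. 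Moreover $|\beta_\gamma(y_\gamma)|\leq 1$ pointwise, so $\{\beta_\gamma(y_\gamma)\}$ is bounded in $L^\infty(\Omega)\subset L^2(\Omega)$. Passing to a subsequence I then obtain $y_\gamma\rightharpoonup\tilde y$ in $H_0^1(\Omega)$, $y_\gamma\to\tilde y$ in $L^2(\Omega)$ and a.e.\ in $\Omega$ (by compact embedding), and $\beta_\gamma(y_\gamma)\rightharpoonup\chi$ in $L^2(\Omega)$.

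The crucial step, which I expect to be the main obstacle, is to identify the weak limit $\chi$ as an admissible slack variable for $\tilde y$, i.e.\ $\chi\,\tilde y=|\tilde y|$ and $|\chi|\leq 1$ a.e. For this I would use the pointwise convergence $y_\gamma\to\tilde y$: wherever $\tilde y(x)>0$ one has $\gamma y_\gamma(x)\to+\infty$ and hence $\beta_\gamma(y_\gamma(x))\to 1$, and symmetrically $\beta_\gamma(y_\gamma(x))\to -1$ where $\tilde y(x)<0$; on $\{\tilde y=0\}$ only $|\beta_\gamma(y_\gamma)|\leq 1$ survives. Since $|\chi|\leq 1$ a.e.\ (being a weak limit of functions bounded by one) and $\chi=\sgn(\tilde y)$ a.e.\ on $\{\tilde y\neq 0\}$, this gives $\chi\,\tilde y=|\tilde y|$ a.e. Passing to the limit in \eqref{eq:pdegamma}—the bilinear form by weak convergence, the nonlinear term by $\beta_\gamma(y_\gamma)\rightharpoonup\chi$, and the right-hand side by $u_\gamma\to u$—then shows that $(\tilde y,\chi)$ solves the complementarity system of Lemma~\ref{lem:slack}; by the equivalence stated there and the uniqueness in Lemma~\ref{lemma_Lipschitz} I conclude $\tilde y=y=S(u)$. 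As the limit is independent of the subsequence, the whole sequence satisfies $y_\gamma\rightharpoonup y$.

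Finally I would upgrade to strong convergence. Starting from $c\|y_\gamma-y\|_{H^1(\Omega)}^2\leq a(y_\gamma-y,y_\gamma-y)$ and testing \eqref{eq:pdegamma} with $v=y_\gamma-y$ gives $a(y_\gamma,y_\gamma-y)=\langle u_\gamma,y_\gamma-y\rangle-(\beta_\gamma(y_\gamma),y_\gamma-y)$. The pairing with $u_\gamma$ tends to zero by $u_\gamma\to u$ in $H^{-1}(\Omega)$ and $y_\gamma-y\rightharpoonup 0$, and $a(y,y_\gamma-y)\to 0$ by weak convergence. For the nonlinear term I would exploit the monotonicity of $\beta_\gamma$, namely $(\beta_\gamma(y_\gamma)-\beta_\gamma(y),y_\gamma-y)\geq 0$, so that $(\beta_\gamma(y_\gamma),y_\gamma-y)\geq(\beta_\gamma(y),y_\gamma-y)$; since $\beta_\gamma(y)$ converges strongly in $L^2(\Omega)$ to some $q_0$ with $|q_0|\leq 1$ (by dominated convergence) while $y_\gamma-y\rightharpoonup 0$, the right-hand side tends to zero. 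Hence $\liminf(\beta_\gamma(y_\gamma),y_\gamma-y)\geq 0$, which forces $\limsup a(y_\gamma,y_\gamma-y)\leq 0$ and therefore $\|y_\gamma-y\|_{H^1(\Omega)}\to 0$. I expect the identification of $\chi$ to be the delicate point, since that is where the structure of the maximal monotone graph $\partial|\cdot|$ and the compactness of the embedding genuinely enter; the remaining steps are routine.
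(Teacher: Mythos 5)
Your proof is correct, but it takes a genuinely different route from the paper. The paper never works with the complementarity system: it rewrites the regularized equation \eqref{eq:pdegamma} as an equivalent variational inequality with the smoothed functional $j_\gamma$ (a primitive of $\beta_\gamma$), shows $j_\gamma(v)\to|v|$ and $j_\gamma(y_\gamma)\to|y|$ in $L^1(\Omega)$ (the latter via a modified dominated convergence theorem, \cite[Lemma~A.2]{BetzMeyer2015}), and then passes to the limit in that inequality using weak lower semicontinuity of the Dirichlet energy; choosing $v=y$ in the resulting $\liminf$/$\limsup$ chain yields norm convergence, so strong convergence comes for free from the same computation, with no separate monotonicity argument. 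You instead stay at the equation level, extract an a.e.\ convergent subsequence, identify the weak-$L^2$ limit $\chi$ of $\beta_\gamma(y_\gamma)$ pointwise (using that $\arctan(\gamma\,\cdot)$ saturates on $\{\tilde y\neq 0\}$ and that $\{|v|\le 1\}$ is weakly closed), and invoke the slack-variable characterization of Lemma~\ref{lem:slack} together with uniqueness from Lemma~\ref{lemma_Lipschitz}; strong convergence then follows from monotonicity of $\beta_\gamma$. Your identification step is sound, and your route has the side benefit of producing convergence of the slack variables $\beta_\gamma(y_\gamma)\rightharpoonup Q(u)$, at the price of relying on Lemma~\ref{lem:slack}, which the paper's proof does not need. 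Two small remarks: first, since $y_\gamma\to y$ strongly in $L^2(\Omega)$ by compact embedding, your monotonicity detour in the last step is unnecessary --- the bound $|(\beta_\gamma(y_\gamma),y_\gamma-y)|\le\|\beta_\gamma(y_\gamma)\|_{L^2(\Omega)}\|y_\gamma-y\|_{L^2(\Omega)}\to 0$ suffices directly; second, the paper's variational-inequality argument transfers verbatim to the discrete setting (Proposition~\ref{prop:properties_reg_disc_VI}), whereas your argument would there require a discrete analogue of the one direction of Lemma~\ref{lem:slack} (which does hold, since $|\chi|\le 1$ gives $(\chi,v_h)\le\|v_h\|_{L^1(\Omega)}$, but it is an extra step).
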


\begin{proof}
Equality \eqref{eq:pdegamma}
 is equivalent to the variational inequality
\begin{equation}\label{reg_var_eq}
a(y_\gamma, v-y_\gamma) + \int_\Omega j_\gamma(v)\ \mathrm dx - \int_\Omega j_\gamma(y_\gamma)\ \mathrm dx \geq 
\dual{u_\gamma}{v - y_\gamma}
\quad \forall v\in H_0^1(\Omega)
\end{equation}
with
\begin{equation*}
j_\gamma(v)= \frac{2}{\pi}\left[v\arctan(\gamma v) -\frac{1}{2\gamma} \ln\left(\frac{1}{\gamma^2}+v^2\right)\right]
\end{equation*}
(note that $j_\gamma'(v)= \frac{2}{\pi}\arctan(\gamma v)$).

If we choose $v=y_\gamma$ in \eqref{eq:pdegamma}, 
then the monotonicity of $\arctan$ implies
\begin{equation*}
\|y_\gamma\|_{H^1(\Omega)}^2\leq C\|u_\gamma\|_{H^{-1}(\Omega)}
\end{equation*}
with a constant $C$ independent of $\gamma$. Hence, we have the weak convergence
\begin{equation}\label{weak_H1_conv}
y_\gamma \rightharpoonup  y\quad \text{in} \ H_0^1(\Omega).
\end{equation}
As we will prove next, the weak limit indeed satisfies the VI in \eqref{eq:pdey}, which justifies the 
above notation.
Due to $\lim_{x\to \pm\infty}\arctan(x) =\pm\frac{\pi}{2}$ and 
$\lim_{\gamma\to \infty}\frac{1}{\pi\gamma}\ln(\gamma^{-2}+v^2)=0$ we have
for every $v\in H^1_0(\Omega)$ that
$\lim_{\gamma\to\infty}j_\gamma(v) =|v|$ a.e.\ in $\Omega$. Since it further holds
$|j_\gamma(v)| \leq C(|v|+1)$, Lebesgue's dominated convergence theorem implies
\begin{equation}\label{j_gamma_v}
\lim_{\gamma\to\infty}j_\gamma(v) =|v|\quad \text{in}\ L^1(\Omega).
\end{equation}
Moreover, due to the weak convergence of $y_\gamma$ in $H_0^1(\Omega)$, 
there exists a subsequence again denoted by $y_\gamma$ such that $y_\gamma \to y$ in $L^2(\Omega)$.
Thus, we obtain (possibly after passing over to a subsequence) 
$\lim_{\gamma\to\infty}j_\gamma(y_\gamma) = |y|$ a.e.\ in $\Omega$ and 
\begin{equation*}
\|j_\gamma(y_\gamma)\|_{L^2(\Omega)} \leq C (\|y_\gamma\|_{L^2(\Omega)}+1)\leq C.
\end{equation*}
Hence, a modified version of Lebesgue's dominated convergence theorem, see \cite[Lemma~A.2]{BetzMeyer2015}, 
implies that
\begin{equation}\label{j_gamma_y_gamma}
\lim_{\gamma\to\infty} j_\gamma(y_\gamma) = |y|\quad\text{in}\ L^1(\Omega).
\end{equation}
Now, we have everything at hand to pass to the limit in \eqref{eq:pdegamma}.
By \eqref{weak_H1_conv}, \eqref{reg_var_eq}, \eqref{j_gamma_v}, \eqref{j_gamma_y_gamma}, 
and the strong convergence of $\{u_\gamma\}$ 
it follows for every $v\in H^1_0(\Omega)$ that
\begin{align}\label{liminf}
\int_\Omega |\nabla y|^2 \ \mathrm dx &\leq \liminf_{\gamma\to \infty}\int_\Omega |\nabla y_\gamma|^2\ \mathrm dx\leq \limsup_{\gamma\to \infty}\int_\Omega |\nabla y_\gamma|^2\ \mathrm dx\nonumber\\
&\leq \limsup_{\gamma\to \infty}\left[\int_\Omega \nabla y_\gamma \cdot\nabla v \ \mathrm dx + \int_\Omega j_\gamma (v)\ \mathrm dx -\int_\Omega j_\gamma(y_\gamma)\ \mathrm dx - \dual{u_\gamma}{v-y_\gamma} \right]\nonumber\\
 &= \int_\Omega \nabla y\cdot \nabla v\ \mathrm dx +\int_\Omega |v| \ \mathrm dx - \int_\Omega |y| \ \mathrm dx 
 - \dual{u}{v-y}.
\end{align}
Thus, $y$ solves the variational inequality and consequently the weak limit $y$ in \eqref{weak_H1_conv} is unique, 
which implies the weak convergence of the whole sequence.

Finally, 
to see the strong convergence of $\{y_\gamma\}$, 
we choose $v=y$ in \eqref{liminf} in order to obtain
$\lim_{\gamma\to\infty}\|y_\gamma\|_{H^1(\Omega)}=\|y\|_{H^1(\Omega)}$, which, 
together with the weak convergence,
implies the desired strong convergence.
\end{proof}

\begin{theorem}\label{theo_reg_error}
The regularization error can be bounded by
\begin{align}
&\|y-y_\gamma\|_{L^2(\Omega)}\leq C\gamma^{-1/3}\label{eq:L2est}\\
&\|y-y_\gamma\|_{L^\infty(\Omega)}\leq C\gamma^{-2/(3(2+s))}\label{eq:Linfest}
\end{align}
with $s>\max\{d/2, 1\}$.
\end{theorem}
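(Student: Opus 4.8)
The plan is to obtain the $L^2$-estimate \eqref{eq:L2est} from an energy argument and then to lift it to the $L^\infty$-estimate \eqref{eq:Linfest} by interpolating it against a uniform higher-order bound on the error. The common tool in both steps is that the states $y$ and $y_\gamma$ are bounded in $L^\infty(\Omega)$ \emph{uniformly} in $\gamma$: for $y=S(u)$ this follows from Lemma~\ref{lem:slack} and elliptic regularity, while for $y_\gamma$ it is exactly the content of the boot-strapping in Lemma~\ref{lemma_reg_VI_regularity}, whose decisive ingredient is the $\gamma$-independent bound $\|\beta_\gamma(\cdot)\|_{L^\infty(\Omega)}\le 1$.

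For \eqref{eq:L2est} I would set $e:=y-y_\gamma$ and $\|e\|_a^2:=a(e,e)$ and combine the two problems at the energy level. Testing the variational inequality \eqref{VI} at $v=y_\gamma$ and the equivalent regularized inequality \eqref{reg_var_eq} at $v=y$ and adding the two, the bilinear parts collapse to $-\|e\|_a^2$ and the two occurrences of $u$ cancel, leaving $\|e\|_a^2\le\int_\Omega\big[(j_\gamma(y)-|y|)-(j_\gamma(y_\gamma)-|y_\gamma|)\big]\di x$. This combined form is convenient because it never requires any control of the coincidence set $\{y=0\}$, which may well carry positive measure. Since $j_\gamma'=\beta_\gamma$, the explicit antiderivative yields the pointwise consistency bound $\big|j_\gamma(s)-|s|\big|\le C\gamma^{-1}\big(1+|\log(\gamma^{-2}+s^2)|\big)$; evaluated at the arguments $y$ and $y_\gamma$, which are uniformly bounded in $L^\infty(\Omega)$, and integrated over $\Omega$, this gives $\|e\|_{H^1(\Omega)}^2\le C\gamma^{-1}\log\gamma$. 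Together with coercivity \eqref{coercivity} and the fact that $\gamma^{-1}\log\gamma$ is dominated by $\gamma^{-2/3}$ for large $\gamma$, the claimed $L^2$-rate \eqref{eq:L2est} follows (with a logarithmic margin to spare).

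For \eqref{eq:Linfest} I would first upgrade the regularity of the error uniformly in $\gamma$. Subtracting \eqref{eq:pdegamma} from \eqref{eq:pdey} shows that $-\Delta e=\beta_\gamma(y_\gamma)-q$ weakly, where $q$ is the slack variable of Lemma~\ref{lem:slack}; since $\|\beta_\gamma(y_\gamma)-q\|_{L^\infty(\Omega)}\le 2$, the $W^{2,p}$-regularity of $\Omega$ provides a bound $\|e\|_{W^{2,p}(\Omega)}\le C$ that is independent of $\gamma$ for every admissible $p$. I would then interpolate this second-order bound against the $L^2$-rate by a Gagliardo--Nirenberg inequality $\|e\|_{L^\infty(\Omega)}\le C\,\|e\|_{W^{2,p}(\Omega)}^{a}\,\|e\|_{L^2(\Omega)}^{1-a}$. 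Tuning the integrability $p$ (equivalently the parameter $s$) through the Sobolev scaling $1/p=2/d-1/s$ pins the interpolation exponent down to $a=s/(2+s)$, so that the $L^2$-rate $\gamma^{-1/3}$ is raised to the power $1-a=2/(2+s)$ and one arrives at $\|e\|_{L^\infty(\Omega)}\le C\gamma^{-2/(3(2+s))}$. The admissible range of $s$ is constrained simultaneously by the Sobolev embedding $W^{2,p}(\Omega)\hookrightarrow L^\infty(\Omega)$ and by the integrability $p$ for which $\Omega$ is $W^{2,p}$-regular (in particular $p\le 6$ if $d=3$), which together account for the condition $s>\max\{d/2,1\}$.

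The $L^2$-step is essentially routine once the antiderivative trick is in place; the genuine obstacle is the $L^\infty$-step. Its difficulty is that \emph{no} second derivative of $y_\gamma$ is controlled uniformly in a pointwise sense: the penalty derivative $\beta_\gamma'$ blows up like $\gamma$ across $\{y_\gamma=0\}$, so one cannot differentiate the state equation and pass to the limit. Uniform information is available only at the level of the bound $\|\beta_\gamma\|_{L^\infty(\Omega)}\le 1$, and the entire $L^\infty$-rate must therefore be extracted from the interpolation between the resulting uniform $W^{2,p}$-bound and the $L^2$-rate. Making the powers of $\gamma$ line up with the integrability that the domain regularity actually permits is the delicate point, and it is precisely this balancing that produces the unusual exponent $2/(3(2+s))$ and the restriction $s>\max\{d/2,1\}$.
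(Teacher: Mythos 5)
Your proof of the $L^2$-estimate \eqref{eq:L2est} is correct but follows a genuinely different route than the paper. The paper tests the error equation \eqref{eq:pdediff} with odd powers $e_\gamma^{2p+1}$ (Nochetto's trick) and splits $\Omega$ into three sets according to the sizes of $|y|$ and $|y_\gamma|$, equilibrating the contributions at $a=3/2$. You instead exploit convexity: testing \eqref{VI} at $y_\gamma$ and \eqref{reg_var_eq} at $y$ and adding makes the bilinear parts collapse to $-a(e_\gamma,e_\gamma)$ and the $u$-terms cancel, and the remaining consistency error is controlled by the pointwise bound $\bigl|j_\gamma(s)-|s|\bigr|\leq C\gamma^{-1}\bigl(1+|\log(\gamma^{-2}+s^2)|\bigr)$ together with the $\gamma$-uniform $L^\infty$-bounds on $y$ and $y_\gamma$ (Lemma~\ref{lemma_reg_VI_regularity}). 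This is more elementary and even gives the stronger bound $\|y-y_\gamma\|_{H^1(\Omega)}\leq C\gamma^{-1/2}(\log\gamma)^{1/2}$, which implies \eqref{eq:L2est}.

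The $L^\infty$-step, however, has a genuine gap in the parameter range. Your interpolation needs a $\gamma$-uniform bound $\|e_\gamma\|_{W^{2,p}(\Omega)}\leq C$ with $p$ tied to $s$ by $1/p=2/d-1/s$ (which is indeed exactly the relation producing $a=s/(s+2)$). For $d=3$ the domain is only assumed $W^{2,p}$-regular for $p\leq 6$, and $1/p=2/3-1/s\geq 1/6$ forces $s\geq 2$. So your argument proves \eqref{eq:Linfest} in 3D only for $s\geq 2$; for $s\in(3/2,2)$, where the asserted rate is strictly better, the required $W^{2,p}$-bound (with $p>6$) is simply not available, and the best your method can deliver is $\gamma^{-1/6}$ (take $p=6$, $a=1/2$). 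For $d=1$ the stated relation even yields $1/p=2-1/s>1$, i.e.\ $p<1$, so the scaling breaks down as written --- this case is harmless, since $p=2$ gives $\|e_\gamma\|_{L^\infty(\Omega)}\leq C\|e_\gamma\|_{H^2(\Omega)}^{1/4}\|e_\gamma\|_{L^2(\Omega)}^{3/4}\leq C\gamma^{-1/4}$, which implies the claim for all $s>1$. In particular, your concluding assertion that the two constraints together reproduce exactly the condition $s>\max\{d/2,1\}$ is incorrect: in 3D they impose the strictly stronger restriction $s\geq 2$.

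The paper avoids this because its $L^\infty$-argument never invokes $W^{2,p}$-regularity of the domain: Stampacchia's estimate $\|e\|_{L^\infty(\Omega)}\leq C\|b\|_{L^s(\Omega)}$ holds for every $s>\max\{d/2,1\}$ without it, and is applied to a superposition of Poisson problems obtained by decomposing $\beta_\gamma(y_\gamma)-q$ according to the signs and sizes of $y$ and $y_\gamma$: where both stay away from zero the right-hand side is pointwise $O(\gamma^{1/\alpha-1})$; where both are small a Nochetto-type argument bounds the error by $O(\gamma^{-1/\alpha})$ directly; and the remaining sets have measure $O(\gamma^{2(1/\alpha-1/3)})$ by \eqref{eq:L2est}, which is precisely where the $L^s$-norm enters. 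Exploiting this structure of $\beta_\gamma(y_\gamma)-q$, rather than only its uniform bound, is what buys the full range of $s$. For the paper's downstream use your restricted range would actually suffice (Theorem~\ref{theo_Nochetto2} chooses $\gamma=Ch^{-3(2+s)}$, so any admissible $s$ yields the $O(h^2)$ regularization error), but as a proof of the theorem as stated your argument falls short for $d=3$.
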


\begin{proof}
Subtracting \eqref{eq:pdegamma} from \eqref{eq:pdey} yields
\begin{equation}\label{eq:pdediff}
a(e_\gamma, v)=(\beta_\gamma(y_\gamma)-\beta(y),v)\quad \forall v\in H_0^1(\Omega)
\end{equation}
with $e_\gamma:= y-y_\gamma$.
We already know that $y, y_\gamma \in W^{1,\infty}(\Omega)$, so we may test \eqref{eq:pdediff} with $e^{2p +1}$, 
where $p\in \N \cup\{0\}$ is fixed but arbitrary.
Then we obtain completely analogously to the proof of \cite[Theorem~2.1]{Nochetto1988}
\begin{equation}\label{eq:Nochetto}
   C\, \frac{2p+1}{(p+1)^2} \|e_\gamma\|_{L^{2(p+1)}(\Omega)}^{2(p+1)} \leq (\nabla e_\gamma, \nabla(e_\gamma^{2p+1}))
   = (\beta_\gamma(y_\gamma) - q, e_\gamma^{2p+1}).
\end{equation}
We split the right-hand side in three parts, i.e.
\begin{align*}
     & (\beta_\gamma(y_\gamma) - q, e_\gamma^{2p+1}) \\
     & = \int_{\Omega_\gamma^0} \big(\beta_\gamma(y_\gamma) - q\big) e_\gamma^{2p+1}\,\mathrm d x
     + \int_{\Omega_\gamma^1} \big(\beta_\gamma(y_\gamma) - q\big) e_\gamma^{2p+1}\,\mathrm d x
     + \int_{\Omega_\gamma^2} \big(\beta_\gamma(y_\gamma) - q\big) e_\gamma^{2p+1}\,\mathrm d x \\
    & =: A^0_\gamma +  A^1_\gamma + A^2_\gamma,
\end{align*}
where 
\begin{equation*}
\begin{aligned}
    \Omega_\gamma^0 &:= \{x\in \Omega : |y(x)|, |y_\gamma(x)| \leq \gamma^{-1/a} \}\\
    \Omega_\gamma^1 &:= \{x\in \Omega \setminus \Omega_\gamma^0 : |y(x)| > \gamma^{-1/a}  \}, \quad
    \Omega_\gamma^2 := \Omega \setminus (\Omega_\gamma^0 \cup \Omega_\gamma^1)
\end{aligned}
\end{equation*}
with some $a>1$ to be specified later. On $\Omega_\gamma^0$ we find
\begin{equation*}
    A_\gamma^0 \leq 
    \|\beta_\gamma(y_\gamma) - q\|_{L^\infty(\Omega)} \int_{\Omega_\gamma^0} (2\gamma^{-1/a})^{(2p+1)}\, \mathrm d x
    \leq C \,(2\gamma^{-1/a})^{(2p+1)},
\end{equation*}
where we used that $-1\leq \beta_\gamma(y_\gamma), q \leq 1$ a.e.\ in $\Omega$.
Now, if $\gamma \geq 1$ (which is no restriction, since $\gamma$ tends to $\infty$ anyway), then,
for every $r\in \R\setminus [-\gamma^{-1/a} , \gamma^{-1/a} ]$, one obtains
\begin{equation}\label{eq:atanest}
\begin{aligned}
    |\beta_\gamma(r) - \sgn(r)| 
    &= |\sgn(\gamma r) - \tfrac{2}{\pi} \arctan(\tfrac{1}{\gamma r}) - \sgn(r)| \\
    & \leq \tfrac{2}{\pi} \arctan(\gamma^{-(1-1/a)}) \leq \gamma^{-(1-1/a)}.
\end{aligned}
\end{equation}

On $\Omega_\gamma^1$ we have
\begin{equation*}
    |y(x)| > \gamma^{-1/a}\quad \Longrightarrow \quad q(x) = \sgn(y(x)).
\end{equation*}

Thus, the monotonicity of $\beta_\gamma$ and \eqref{eq:atanest} yield
\begin{equation*}
\begin{aligned}
    A_\gamma^1 &= \int_{\Omega_\gamma^1} \big(\beta_\gamma(y_\gamma) - \beta_\gamma(y)\big) (y - y_\gamma)^{2p+1}\,\mathrm d x
    + \int_{\Omega_\gamma^1} \big(\beta_\gamma(y) - \sgn(y)\big) e_\gamma^{2p+1}\,\mathrm d x\\
   & \leq \gamma^{-(1-1/a)}\,\|e_\gamma\|_{L^{2p+1}(\Omega)}^{2p+1}. 
\end{aligned}
\end{equation*}
On $\Omega_\gamma^2$ we have $|y_\gamma| > \gamma^{-1/a}$ and thus, we can argue similarly to arrive at
\begin{equation*}
\begin{aligned}
    A_\gamma^2 &= \int_{\Omega_\gamma^2} \big(\beta_\gamma(y_\gamma) - \sgn(y_\gamma)\big) e_\gamma^{2p+1}\,\mathrm d x
    + \int_{\Omega_\gamma^1} \big(\sgn(y_\gamma) - q\big)(y - y_\gamma)^{2p+1}\,\mathrm d x\\
    & \leq \gamma^{-(1-1/a)}\,\|e_\gamma\|_{L^{2p+1}(\Omega)}^{2p+1},
\end{aligned}
\end{equation*}
where we used that the convex subdifferential of $\R \ni r \mapsto |r| \in \R$ is maximal monotone.

Altogether we obtain 
\begin{equation*}
  \|e_\gamma\|_{L^{2(p+1)}(\Omega)}^{2(p+1)} 
     \quad \leq C \,\frac{(p+1)^2}{2p + 1} \, 2^{2p + 1}\, \big(\gamma^{-1/a}\big)^{2p+1}
    + C \,\frac{(p+1)^2}{2p + 1} \, \gamma^{-(1-1/a)}\, \|e_\gamma\|_{L^{2p+1}(\Omega)}^{2p+1}.
\end{equation*}

For $p=0$ we obtain
\begin{equation*}
    \|e_\gamma\|_{L^{2}(\Omega)}^{2}
    \leq C \, \gamma^{-1/a} + C \,\gamma^{-(1-1/a)}\,\|e_\gamma\|_{L^1(\Omega)}. 
\end{equation*}
Thus, Young's and H\"older's inequality lead to
\begin{equation*}
    \|e_\gamma\|_{L^{2}(\Omega)} \leq C\, \max\{\gamma^{-1/(2a)}, \gamma^{-(1-1/a)}\}.
\end{equation*}
Now we can specify $a$. In order to equilibrate both error contributions we fix $a$ such that 
$1/(2a) = 1-1/a$, i.e. $a = 3/2>1$, and consequently \eqref{eq:L2est} is proved.

For the proof of \eqref{eq:Linfest} let $\alpha > 1$ be given and define
\begin{equation*}
\begin{alignedat}{3}
    \Omega_\gamma^{++} &:= \{x\in \Omega : y(x) > \gamma^{-1/\alpha} \}, & \quad
    \Omega_\gamma^{--} &:= \{x\in \Omega : y(x) < - \gamma^{-1/\alpha} \}, \\
    \Omega_\gamma^{+} &:= \{x\in \Omega : 0 < y(x) \leq \gamma^{-1/\alpha} \}, & \quad
    \Omega_\gamma^{-} &:= \{x\in \Omega : 0 > y(x) \geq - \gamma^{-1/\alpha} \}, \\
    \Omega_\gamma^0 &:= \{x\in \Omega: y(x) = 0\}
\end{alignedat}
\end{equation*}
as well as
\begin{equation*}
\begin{alignedat}{3}
    \Lambda_\gamma^+ &:= \{x\in \Omega: y_\gamma(x) > \tfrac{1}{2}\,\gamma^{-1/\alpha}\}, & \quad
    \Lambda_\gamma^- &:= \{x\in \Omega: y_\gamma(x) < -\tfrac{1}{2}\,\gamma^{-1/\alpha}\}, \\
    \Lambda_\gamma^0 &:= \{x\in \Omega: |y_\gamma(x)| \leq \tfrac{1}{2}\,\gamma^{-1/\alpha}\}.
\end{alignedat}
\end{equation*}
Note that 
\begin{equation*}
    \Omega_\gamma^{++} \cup \Omega_\gamma^{--} \cup \Omega_\gamma^{+} \cup 
    \Omega_\gamma^{-} \cup \Omega_\gamma^{0}  = \Omega \quad \text{and} \quad 
    \Lambda_\gamma^+ \cup \Lambda_\gamma^- \cup \Lambda_\gamma^0   = \Omega.
\end{equation*}
We denote the characteristic function of a set $M\subset \Omega$ by $\chi(M)$ and rewrite \eqref{eq:pdediff} by
\begin{equation*}
\begin{aligned}
     -\Delta e_\gamma &= 
     \Big(\chi(\Omega_\gamma^{++})\chi(\Lambda_\gamma^+)
    + \chi(\Omega_\gamma^{++})\chi(\Lambda_\gamma^0 \cup \Lambda_\gamma^-)\\
    & \quad + \chi(\Omega_\gamma^+)\chi(\Lambda_\gamma^+)
    + \chi(\Omega_\gamma^+)\chi(\Lambda_\gamma^0)
    + \chi(\Omega_\gamma^+)\chi(\Lambda_\gamma^-)\\
    & \quad + \chi(\Omega_\gamma^{--})\chi(\Lambda_\gamma^-)
    + \chi(\Omega_\gamma^{--})\chi(\Lambda_\gamma^0 \cup \Lambda_\gamma^+)\\
    & \quad + \chi(\Omega_\gamma^-)\chi(\Lambda_\gamma^+)
    + \chi(\Omega_\gamma^-)\chi(\Lambda_\gamma^0)
    + \chi(\Omega_\gamma^-)\chi(\Lambda_\gamma^-)\\
    & \quad + \chi(\Omega_\gamma^{0})\chi(\Lambda_\gamma^0)
    + \chi(\Omega_\gamma^{0})\chi(\Lambda_\gamma^+ \cup \Lambda_\gamma^-)\Big)( \beta_\gamma(y_\gamma) - q\big).
\end{aligned}
\end{equation*}
The idea is now to discuss every right-hand side separately. 
For this purpose define
\begin{equation*}
    b^{++}_{+} := \chi(\Omega_\gamma^{++})\chi(\Lambda_\gamma^+)( \beta_\gamma(y_\gamma) - q), 
    \quad
    b^{++}_{0-} := 
    \chi(\Omega_\gamma^{++})\chi(\Lambda_\gamma^0 \cup \Lambda_\gamma^-)( \beta_\gamma(y_\gamma) - q), 
\end{equation*}
and $b^+_+$, $b^+_0$, $b^+_-$, $b^{--}_-$, $b^{--}_{0+}$, $b^-_+$, $b^-_0$, $b^-_-$,
$b^0_0$, and $b^0_{+-}$ analogously.
Moreover, we denote the associated solutions of Poisson's equations by $e^{++}_+$ etc., i.e. 
\begin{equation*}
    e^{++}_+ \in H^1_0(\Omega), \quad -\Delta e^{++}_+ = b^{++}_+. 
\end{equation*}
Then, by superposition,  
\begin{equation*}
    e_\gamma = 
    e^{++}_+ +  e^{++}_{0-} + e^{+}_+ + e^{+}_0 + e^{+}_- + e^{--}_- + e^{--}_{0+} 
    + e^{-}_+ + e^{-}_0 + e^{-}_- + e^{0}_0 + e^{0}_{+-}. 
\end{equation*}
Now, according to the famous $L^\infty$-estimate of Stampacchia there is a constant $C>0$ independent of the right-hand side such that
\begin{equation*}
    e\in H^1_0(\Omega), \; -\Delta e = b
    \quad \Longrightarrow \quad \|e\|_{L^\infty(\Omega)} \leq C\, \|b\|_{L^s(\Omega)}
    \text{ with } s > \max\{d/2,1\}.
\end{equation*}
We will use this result to estimate (most of) the above errors. 

(a) Estimate of $e^{++}_+$: \\
On $\Omega_\gamma^{++} \cap \Lambda_\gamma^{+}$ we have $y > 0$ and $y_\gamma > \frac{1}{2}\gamma^{-1/\alpha}$. 
Therefore, $q = 1 = \sgn(y_\gamma)$ a.e.\ in $\Omega_\gamma^{++} \cap \Lambda_\gamma^{+}$ and thus,
\begin{equation*}
\begin{aligned}
    \|e^{++}_+\|_{L^\infty(\Omega)}
    & \leq C \,\|b^{++}_+\|_{L^s(\Omega)}\\
    & = C\, \Big( \int_{\Omega_\gamma^{++} \cap \Lambda_\gamma^{+}} 
   \big| \beta_\gamma(y_\gamma) - \sgn(y_\gamma)\big|^s\,\mathrm d x\Big)^{1/s}.
\end{aligned}
\end{equation*}
Now, thanks to $y_\gamma > \frac{1}{2}\gamma^{-1/\alpha}$, an estimate analogous to \eqref{eq:atanest} yields
\begin{equation*}
    |\beta_\gamma(y_\gamma) - \sgn(y_\gamma)|\leq \gamma^{1/\alpha - 1} 
    \quad \text{a.e.\ in }\Omega_\gamma^{++} \cap \Lambda_\gamma^{+}
\end{equation*}
and therefore,
\begin{equation*}
    \|e^{++}_+\|_{L^\infty(\Omega)} \leq C\, \gamma^{1/\alpha - 1} .
\end{equation*}
The estimates of $e_+^+$, $e_ -^{--}$ and $e_{-}^-$ can be done analogously.

(b) Estimate of $e^{++}_{0-}$:\\
On $M := \Omega_\gamma^{++} \cap (\Lambda_\gamma^{0} \cup \Lambda_\gamma^-)$ there holds
\begin{equation*}
    y(x) - y_\gamma(x) \geq \tfrac{1}{2}\,\gamma^{-1/\alpha}
\end{equation*}
and thus, the Cauchy-Schwarz inequality and \eqref{eq:L2est} give
\begin{equation*}
\begin{aligned}
    |M| = \gamma^{1/\alpha} \int_M \gamma^{-1/\alpha} \,\mathrm d x
    &\leq 2 \gamma^{1/\alpha} \int_M |y - y_\gamma|\, \mathrm d x \\
    & \leq  C\, \gamma^{1/\alpha}  \, |M|^{1/2}\,\|y -  y_\gamma\|_{L^2(\Omega)}
    \leq C \, |M|^{1/2}\,\gamma^{1/\alpha - 1/3}.
\end{aligned}
\end{equation*}
This yields
\begin{equation*}
\begin{aligned}
    \|e^{++}_{0-}\|_{L^\infty(\Omega)} 
    &\leq C\, \|b^{++}_{0-}\|_{L^s(\Omega)}\\
    &= C\, \Big(\int_M \big| \beta_\gamma(y_\gamma) - q\big|^s\,\mathrm d x\Big)^{1/s}\\
    &\leq 2C \, |M|^{1/s} \leq C \,\gamma^{2/s(1/\alpha - 1/3)}.
\end{aligned}    
\end{equation*}
The estimates of $e_-^+$, $e_{0+}^{--}$, $e_+^-$ and $e_{+-}^0$ can be done analogously.

(c) Estimate of $e^+_0$:\\
On $\Omega_\gamma^+ \cap \Lambda_\gamma^0$ there holds $|y|, |y_\gamma| \leq \gamma^{-1/\alpha}$.
Now, we argue as Nochetto in the proof of \cite[Theorem~2.1]{Nochetto1988} (cf. \eqref{eq:Nochetto}) to obtain
\begin{equation*}
    C\, \frac{2p+1}{(p+1)^2} \, \|e^+_0\|_{L^{2(p+1)}(\Omega)}^{2(p+1)} 
    \leq  \int_{\Omega_\gamma^+\cap \Lambda_\gamma^0} (\beta_\gamma(y_\gamma) - q) (e^+_0)^{2p+1} \mathrm d x
     \leq C\, (2\gamma)^{(-1/\alpha)(2p+1)}
\end{equation*}
and hence
\begin{equation*}
    \|e^+_0\|_{L^{2(p+1)}(\Omega)}
     \leq \Big(C\frac{(p+1)^2}{2p + 1} \, 2^{2p + 1}\, \big(\gamma^{-1/\alpha}\big)^{2p+1}\Big)^{1/(2p+2)}
    \overset{p\to \infty}{\longrightarrow} C \, \gamma^{-1/\alpha}.
\end{equation*}
This implies 
\begin{equation*}
    \|e^+_0\|_{L^{\infty}(\Omega)} \leq C\, \gamma^{-1/\alpha}.
\end{equation*}
The estimates of $e_0^-$ and $e_{0}^0$ can be done analogously.

In summary we deduce from (a)--(c) and the superposition of errors that 
\begin{equation*}
    \|e_\gamma\|_{L^\infty(\Omega)}
    \leq C\, \max\big\{ \gamma^{-1/\alpha}, \gamma^{1/\alpha -1}, \gamma^{2/s(1/\alpha - 1/3)}\big\}.
\end{equation*}
Next we adjust $\alpha$ so that the error contributions are equilibrated. Since $\alpha, s>1$, it is clear that 
$ \gamma^{2/s(1/\alpha - 1/3)} > \gamma^{1/\alpha -1}$ for $\gamma > 1$. Thus, the errors are equilibrated, if 
$-1/\alpha = 2/s(1/\alpha - 1/3)$, i.e. 
\begin{equation*}
    \alpha = \tfrac{3}{2}(2+s) > 
    \begin{cases}
        9/2, & \text{if}\ d\leq 2,\\
        21/4, & \text{if}\ d= 3.
    \end{cases}
\end{equation*}
We then finally arrive at 
\begin{equation*}
     \|e_\gamma\|_{L^\infty(\Omega)} \leq C\, \gamma^{-2/(3(2+s))}.
\end{equation*}

\end{proof}

\subsection{Error Estimates for the Discrete Regularized Variational Inequality}

The regularization procedure for the discrete variational inequality \eqref{disc_VI} is analogous to the continuous case (see Section \ref{sec_reg_VI}). Thus, the regularized problem of \eqref{disc_VI} is given by
\begin{equation}\label{reg_disc_VI}
y_{h,\gamma}\in V_h,\quad a(y_{h,\gamma},v_h)+(\beta_\gamma(y_{h,\gamma}),v_h)=\langle u,v_h\rangle\quad \forall v_h\in V_h,
\end{equation}
where $V_h$ is the space of piecewise continuous functions of Section \ref{sec:Discretization}.
An inspection of the proof of Theorem~\ref{theo_conv_to_original_sol} reveals that its arguments are not limited 
to the continuous case but also valid in the discrete case. 
Note in this context that, for fixed $h>0$, $V_h$ is a closed subspace of $H^1_0(\Omega)$ such that the weak 
limit of a weakly convergent sequence $\{y_{h, \gamma}\}_{\gamma > 0} \subset V_h$ 
(for $\gamma \to \infty$) is again an element of $V_h$.
Together with the arguments in the proof of Theorem~\ref{theo_conv_to_original_sol} this shows the following:

\begin{proposition}\label{prop:properties_reg_disc_VI}
    Assume that a sequence $\{u_\gamma\} \subset H^{-1}(\Omega)$ is given such that $u_\gamma \to u$ in $H^{-1}(\Omega)$
as $\gamma \to \infty$.   
    Let us denote the solutions of \eqref{reg_disc_VI} associated with $u_\gamma$ by $y_{h,\gamma}$ 
    and the solution of \eqref{disc_VI} by $y_h$.
    Then $y_{h,\gamma}\to y_h$ in $H_0^1(\Omega)$ as $\gamma\to \infty$.
\end{proposition}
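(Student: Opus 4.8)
The plan is to mirror the proof of Theorem~\ref{theo_conv_to_original_sol} essentially line by line, the only structural change being that all test functions are now confined to the discrete space $V_h$ and that the unique solvability of the discrete VI \eqref{disc_VI} replaces that of the continuous one. As already hinted at in the paragraph preceding the statement, the crucial structural fact that makes the transcription work is that $V_h$ is a closed subspace of $H^1_0(\Omega)$, so that weak limits of sequences in $V_h$ remain in $V_h$.

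First I would rewrite the discrete penalized equation \eqref{reg_disc_VI} as the equivalent discrete variational inequality
\begin{equation*}
a(y_{h,\gamma}, v_h - y_{h,\gamma}) + \int_\Omega j_\gamma(v_h)\,\di x - \int_\Omega j_\gamma(y_{h,\gamma})\,\di x \geq \dual{u_\gamma}{v_h - y_{h,\gamma}} \quad \forall\, v_h \in V_h,
\end{equation*}
using the same convex potential $j_\gamma$ with $j_\gamma' = \beta_\gamma$ as in \eqref{reg_var_eq}; this equivalence is exactly the one exploited in the continuous case and carries over verbatim because $V_h\subset H_0^1(\Omega)$. Testing \eqref{reg_disc_VI} with $v_h = y_{h,\gamma}$ and using the monotonicity of $\arctan$ yields the uniform bound $\|y_{h,\gamma}\|_{H^1(\Omega)}^2 \leq C\|u_\gamma\|_{H^{-1}(\Omega)}$ with $C$ independent of $\gamma$, where the right-hand side is bounded since $u_\gamma \to u$ in $H^{-1}(\Omega)$. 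Hence a subsequence converges weakly in $H_0^1(\Omega)$ to some $y_h^*$, and since $V_h$ is closed, $y_h^* \in V_h$. In fact, as $V_h$ is finite dimensional for fixed $h$, this convergence is automatically strong, which slightly simplifies the remaining steps.

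Next I would pass to the limit $\gamma\to\infty$ in the discrete variational inequality exactly as in \eqref{liminf}. The two convergences $j_\gamma(v_h)\to |v_h|$ and $j_\gamma(y_{h,\gamma})\to |y_h^*|$ in $L^1(\Omega)$ follow as in \eqref{j_gamma_v} and \eqref{j_gamma_y_gamma}; here the dominated-convergence arguments are even easier than in the continuous case, since every element of $V_h$ is a bounded, piecewise linear function. Combined with the strong convergence $u_\gamma\to u$ in $H^{-1}(\Omega)$ and the weak lower semicontinuity of the $H^1$-seminorm, the $\limsup$-chain of \eqref{liminf}, now tested with arbitrary $v_h\in V_h$, shows that $y_h^*$ solves the discrete VI \eqref{disc_VI}. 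By the unique solvability of \eqref{disc_VI} we conclude $y_h^* = y_h$, and a standard subsequence argument upgrades this to convergence of the whole sequence.

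Finally, strong $H_0^1$-convergence is obtained either immediately from the finite dimensionality of $V_h$, or, if one prefers to stay parallel to the continuous proof, by inserting $v_h = y_h$ into the limiting inequality to deduce $\lim_{\gamma\to\infty}\|y_{h,\gamma}\|_{H^1(\Omega)} = \|y_h\|_{H^1(\Omega)}$ and combining this norm convergence with the weak convergence. The only point requiring genuine attention is the confinement of the test functions to $V_h$ throughout; everything else is a transcription of Theorem~\ref{theo_conv_to_original_sol}, and the finite dimensionality of $V_h$ if anything removes the delicate compactness considerations needed in the continuous setting, so I do not expect a serious obstacle here.
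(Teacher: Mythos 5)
Your proposal is correct and follows essentially the same route as the paper, which likewise proves this proposition by inspecting the proof of Theorem~\ref{theo_conv_to_original_sol} and noting that $V_h$ is a closed subspace of $H^1_0(\Omega)$, so weak limits of sequences in $V_h$ remain in $V_h$. Your additional observation that the finite dimensionality of $V_h$ turns weak into strong convergence (and trivializes the compactness step) is a valid minor simplification but does not change the argument.
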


In the following we set $e_h:=y_\gamma-y_{h,\gamma}$. Furthermore, we introduce a regularization of the Dirac measure which fulfills
\begin{equation}\label{def_Dirac}
\delta\in C_0^\infty(\Omega),\quad \delta\geq 0,\quad \int_\Omega \delta\, \mathrm dx=1,\quad
\supp\delta\subset B_{\epsilon h}(x_0):=\{x\in\Omega : |x-x_0|<\epsilon h\},
\end{equation}
where $x_0\in\Omega$ and $\epsilon>0$. Subtracting \eqref{reg_disc_VI} from \eqref{eq:pdegamma} leads to the equation
\begin{equation}\label{eq_error_equation}
a(e_h,v_h)+(be_h,v_h)=0\quad \forall v_h\in V_h,
\end{equation}
where
\begin{equation*}
b(x):=\begin{cases}
[\beta_\gamma(y_\gamma)-\beta_\gamma(y_{h,\gamma})]/e_h &\text{if}\ e_h(x)\neq 0\\
0 & \text{if}\ e_h(x)=0.
\end{cases}
\end{equation*}

The dual problem of \eqref{eq_error_equation} is given by
\begin{equation*}
G\in H_0^1(\Omega):\quad -\Delta G+ bG=\delta.
\end{equation*}

$G$ is the regularized Green's function and satisfies the bound

\begin{equation}\label{eq_L1_boundedness}
\|\Delta G\|_{L^1(\Omega)}\leq C
\end{equation}

(cf. \cite{Nochetto1988}). We define the Ritz projection $G_h$ of the regularized Green's function $G$ onto $V_h$ by
\begin{equation*}
G_h\in V_h,\quad a(G, v_h)=a(G_h, v_h)\quad \forall v_h\in V_h.
\end{equation*}

\begin{lemma}\label{lemma_G}
It holds
\begin{equation*}
\|G-G_h\|_{L^{p'}(\Omega)}\leq ch^{2-d/p}|\log h|,
\end{equation*}
where $p'$ denotes the conjugate exponent to $p$.
\end{lemma}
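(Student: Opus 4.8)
The plan is to run a duality (Aubin--Nitsche type) argument, but with the twist that the decisive pairing exploits the $L^1$-bound \eqref{eq_L1_boundedness} on $\Delta G$ rather than an $H^2$-bound on $G$; the latter is unavailable here, since $G$ only solves $-\Delta G = \delta - bG$ with a right-hand side that is merely bounded in $L^1(\Omega)$. First I would characterize the norm by duality,
\[
\|G - G_h\|_{L^{p'}(\Omega)} = \sup\big\{ (G - G_h, \psi) : \psi \in L^p(\Omega),\ \|\psi\|_{L^p(\Omega)} \leq 1\big\},
\]
and fix such a test function $\psi$. To it I associate the solution $w \in H_0^1(\Omega)$ of the auxiliary Poisson problem $-\Delta w = \psi$. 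By the $W^{2,p}$-regularity of $\Omega$ from Assumption~i) we have $w \in W^{2,p}(\Omega)$ with $\|w\|_{W^{2,p}(\Omega)} \leq C\|\psi\|_{L^p(\Omega)} \leq C$, and for the admissible range of exponents (which ensures $W^{2,p}(\Omega) \hookrightarrow C(\bar\Omega)$, i.e.\ $p>d/2$) we obtain in particular $w \in L^\infty(\Omega)$.

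Next I would rewrite $(G - G_h, \psi)$ using $\psi = -\Delta w$ and integrating by parts, which is legitimate because $G - G_h \in H_0^1(\Omega)$ and $w \in H^2(\Omega) \cap H_0^1(\Omega)$:
\[
(G - G_h, \psi) = (\nabla(G - G_h), \nabla w) = a(G - G_h, w).
\]
Denoting by $w_h \in V_h$ the Ritz projection of $w$, defined by $a(w - w_h, v_h) = 0$ for all $v_h \in V_h$, I would then combine the Galerkin orthogonality of $G_h$ (namely $a(G - G_h, v_h) = 0$ for all $v_h \in V_h$) with the symmetry of $a$ to collapse the error identity to
\[
a(G - G_h, w) = a(G - G_h, w - w_h) = a(G, w - w_h),
\]
where the last equality uses $a(G_h, w - w_h) = a(w - w_h, G_h) = 0$.

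The decisive step is a second integration by parts. Since $w - w_h \in H_0^1(\Omega) \cap L^\infty(\Omega)$ and $G$ satisfies the weak equation $a(G,v) = (\delta - bG, v)$ for all $v\in H_0^1(\Omega)$ with $\delta - bG = -\Delta G \in L^1(\Omega)$, the $L^1$--$L^\infty$ pairing yields
\[
a(G, w - w_h) = (-\Delta G, w - w_h) \leq \|\Delta G\|_{L^1(\Omega)}\, \|w - w_h\|_{L^\infty(\Omega)} \leq C\,\|w - w_h\|_{L^\infty(\Omega)},
\]
where I invoke the bound \eqref{eq_L1_boundedness} and $\|\Delta G\|_{L^1(\Omega)} = \|\delta - bG\|_{L^1(\Omega)}$. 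It then remains to insert the sharp $L^\infty$-error estimate for the Ritz projection of a $W^{2,p}$-function on quasi-uniform meshes,
\[
\|w - w_h\|_{L^\infty(\Omega)} \leq C\,h^{2-d/p}\,|\log h|\,\|w\|_{W^{2,p}(\Omega)},
\]
which follows by combining the $L^\infty$-stability (quasi-optimality) of the Ritz projection with the interpolation estimate $\|w - I_h w\|_{L^\infty(\Omega)} \leq C h^{2-d/p}\|w\|_{W^{2,p}(\Omega)}$, where $I_h$ denotes the Lagrange interpolation operator; this is exactly where the logarithmic factor enters. Taking the supremum over $\psi$ and using $\|w\|_{W^{2,p}(\Omega)} \leq C$ then yields the claim. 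The main obstacle I anticipate is the rigorous justification of the second integration by parts and the $L^1$--$L^\infty$ pairing, i.e.\ circumventing the missing $H^2$-regularity of $G$ by relying solely on \eqref{eq_L1_boundedness}, together with the careful invocation of the sharp pointwise finite element estimate that produces the $|\log h|$ term.
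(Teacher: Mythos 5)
Your proposal is correct and follows essentially the same duality argument as the paper: an auxiliary Poisson problem with $W^{2,p}$-regularity, the identity $(G-G_h,\psi)=(-\Delta G, w-w_h)$ (which the paper cites from Nochetto and you derive explicitly via Galerkin orthogonality and symmetry of $a$), the $L^1$--$L^\infty$ pairing using \eqref{eq_L1_boundedness}, and the Schatz--Wahlbin pointwise estimate for the Ritz projection. The only cosmetic difference is that the paper tests with $\phi\in L^\infty(\Omega)$ and takes the supremum at the end, whereas you work directly with $\psi\in L^p(\Omega)$; the substance is identical.
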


\begin{proof}
Let $\phi\in L^\infty(\Omega)$ be given and define $\psi\in H_0^1(\Omega)$ by
\begin{equation*}
-\Delta \psi=\phi\quad \text{in}\ H^{-1}(\Omega).
\end{equation*}
Since $\Omega$ is assumed to be $W^{2,p}$-regular, we have
\begin{equation*}
\|\psi\|_{W^{2,p}(\Omega)}\leq C\|\phi\|_{L^p(\Omega)}.
\end{equation*}
By the last equation in \cite[p.~249]{Nochetto1988} and the $L^1$-boundedness of $\Delta G$ (cf. \eqref{eq_L1_boundedness}) we deduce
\begin{align*}
(G-G_h,\phi)&=(-\Delta G,\psi-\psi_h)\\
&\leq \|\Delta G\|_{L^1(\Omega)}\|\psi-\psi_h\|_{L^\infty(\Omega)}\\
&\leq C \|\psi-\psi_h\|_{L^\infty(\Omega)},
\end{align*}
where $\psi_h$ is the Ritz projection of $\psi$. Further by \cite[p.~3]{SchatzWahlbin1982} we know
\begin{equation*}
\|\psi-\psi_h\|_{L^\infty(\Omega)}\leq ch^{2-d/p}|\log h| \|\psi\|_{W^{2,p}(\Omega)}.
\end{equation*}
Plugging everything together we arrive at
\begin{equation*}
(G-G_h,\phi)\leq Ch^{2-d/p}|\log h| \|\phi\|_{L^p(\Omega)}.
\end{equation*}
Hence, we finally obtain
\begin{equation*}
\|G-G_h\|_{L^{p'}(\Omega)}=\sup_{\phi\neq 0}\frac{(G-G_h,\phi)}{\|\phi\|_{L^p(\Omega)}}\leq ch^{2-d/p}|\log h|.
\end{equation*}
\end{proof}

\begin{theorem}\label{theo_disc_reg_error}
Let $u\in L^p(\Omega)$, $p\in(1,\infty)$. Then the discretization error of the regularized variational inequality can be bounded by
\begin{equation*}
\|e_h\|\leq Ch^{2-d/p}|\log h|(\|u\|_{L^p(\Omega)}+1).
\end{equation*}

\end{theorem}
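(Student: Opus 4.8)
The plan is to follow Nochetto's regularized Green's function duality. The $L^\infty$-error $\|e_h\|_{L^\infty(\Omega)}$ (which is what $\|e_h\|$ abbreviates here) is accessed by testing against the mollified Dirac $\delta$ from \eqref{def_Dirac} centred at a point $x_0$ where $|e_h|$ is (nearly) maximal. All building blocks are in place: the Galerkin orthogonality \eqref{eq_error_equation}, the regularized Green's function $G$ with $-\Delta G + bG = \delta$ together with its $L^1$-bound \eqref{eq_L1_boundedness}, its Ritz projection $G_h$, and --- most importantly --- the sharp estimate $\|G-G_h\|_{L^{p'}(\Omega)}\le ch^{2-d/p}|\log h|$ from Lemma~\ref{lemma_G}. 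Testing the dual problem with $e_h\in H^1_0(\Omega)$ gives $(\delta,e_h)=a(G,e_h)+(bG,e_h)$, while \eqref{eq_error_equation} with $v_h=G_h\in V_h$ reads $a(e_h,G_h)+(be_h,G_h)=0$. Subtracting and exploiting the symmetry of $a$, I obtain the representation
\[
(\delta,e_h)=a(e_h,G-G_h)+(be_h,G-G_h).
\]

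The main obstacle is the gradient term $a(e_h,G-G_h)$, which cannot be controlled directly by the $L^{p'}$-bound of Lemma~\ref{lemma_G}. The key is to remove all derivatives by combining two facts. First, since $G_h$ is the Ritz projection one has $a(G-G_h,v_h)=0$ for all $v_h\in V_h$; in particular $a(y_{h,\gamma},G-G_h)=0$, so that $a(e_h,G-G_h)=a(y_\gamma,G-G_h)$. Second, $y_\gamma\in W^{2,p}(\Omega)\cap H^1_0(\Omega)$ solves \eqref{eq:pdegamma} in the strong form $-\Delta y_\gamma=u-\beta_\gamma(y_\gamma)$, so Green's formula yields $a(y_\gamma,G-G_h)=(u-\beta_\gamma(y_\gamma),G-G_h)$. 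Here I use crucially that the data enjoy the generic regularity $u\in L^p(\Omega)$, which --- in contrast to Nochetto's $L^\infty$-data setting --- still suffices to write the continuous equation strongly. Finally, the definition of $b$ gives $be_h=\beta_\gamma(y_\gamma)-\beta_\gamma(y_{h,\gamma})$ pointwise, hence $(be_h,G-G_h)=(\beta_\gamma(y_\gamma)-\beta_\gamma(y_{h,\gamma}),G-G_h)$. Adding the two contributions, the term $\beta_\gamma(y_\gamma)$ cancels and I arrive at the clean identity
\[
(\delta,e_h)=(u-\beta_\gamma(y_{h,\gamma}),G-G_h).
\]

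It is worth stressing that this cancellation is also what keeps the final constant independent of the regularization parameter $\gamma$: the potentially large factor $b$ (of size $O(\gamma)$) never has to be estimated by its magnitude, and only the uniform bound $|\beta_\gamma|\le1$ enters. Indeed, by H\"older's inequality, $|\beta_\gamma|\le1$ and boundedness of $\Omega$,
\[
|(\delta,e_h)|\le\|u-\beta_\gamma(y_{h,\gamma})\|_{L^p(\Omega)}\,\|G-G_h\|_{L^{p'}(\Omega)}\le C\big(\|u\|_{L^p(\Omega)}+1\big)\,\|G-G_h\|_{L^{p'}(\Omega)},
\]
and Lemma~\ref{lemma_G} turns this into $|(\delta,e_h)|\le C(\|u\|_{L^p(\Omega)}+1)\,h^{2-d/p}|\log h|$.

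To conclude, I pass from the mollified point value $(\delta,e_h)$ to $\|e_h\|_{L^\infty(\Omega)}$. In the relevant regime $p>d/2$ (the only one in which the rate $h^{2-d/p}$ is meaningful) the Sobolev embedding $W^{2,p}(\Omega)\hookrightarrow C(\overline\Omega)$ shows that $e_h=y_\gamma-y_{h,\gamma}$ is continuous, so $|e_h|$ attains its maximum at some $x_0$. Choosing $\delta$ supported in $B_{\epsilon h}(x_0)$ as in \eqref{def_Dirac}, the reduction of $\|e_h\|_{L^\infty(\Omega)}$ to the already estimated pairing $(\delta,e_h)$ is exactly the $\gamma$-independent part of the argument inherited from \cite{Nochetto1988}, and it produces the asserted bound. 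The genuinely new points compared with \cite{Nochetto1988} are the two listed above: the reduction of the gradient term via Ritz orthogonality together with the strong form of the equation (available thanks to $u\in L^p$), and the $\gamma$-uniformity obtained through the cancellation of $\beta_\gamma(y_\gamma)$.
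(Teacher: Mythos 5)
Your proposal is correct and takes essentially the same route as the paper: both are adaptations of the proof of \cite[Theorem~2.3]{Nochetto1988}, built on the regularized Green's function $G$ from \eqref{def_Dirac}--\eqref{eq_L1_boundedness}, its Ritz projection $G_h$, and the bound $\|G-G_h\|_{L^{p'}(\Omega)}\leq ch^{2-d/p}|\log h|$ of Lemma~\ref{lemma_G}, combined with H\"older's inequality and the uniform bound $|\beta_\gamma|\leq 1$. The two write-ups merely detail complementary halves of the argument: you derive explicitly the identity $(\delta,e_h)=(u-\beta_\gamma(y_{h,\gamma}),G-G_h)$, which the paper simply cites from \cite{Nochetto1988}, whereas you defer to \cite{Nochetto1988} the localization step $\|e_h\|_{L^\infty(\Omega)}\leq e_h(x_1)+C\epsilon\|I_h e_h\|_{L^\infty(\Omega)}+\epsilon h\|\nabla(y_\gamma - I_h y_\gamma)\|_{L^\infty(\Omega)}$, which the paper spells out (mean value theorem, inverse inequality, $L^\infty$-stability of $I_h$, and --- the one point you should have made explicit --- the $\gamma$-uniform bound $\|y_\gamma\|_{W^{2,p}(\Omega)}\leq C(1+\|u\|)$ needed so that the interpolation term is of the right order independently of $\gamma$).
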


\begin{proof}
We trace the proof of \cite[Theorem~2.3]{Nochetto1988}. Let $x_0\in\Omega$ be such that $e_h(x_0)=\|e_h\|_{L^\infty(\Omega)}$ and let $x_1\in B_{\epsilon h}(x_0)$ be such that $(e_h,\delta)=e_h(x_1)$, where $\delta$ is defined in \eqref{def_Dirac}. As in the proof of \cite[Theorem~2.3]{Nochetto1988} we obtain by the mean value theorem and an inverse inequality 
\begin{equation*}
\|e_h\|_{L^\infty(\Omega)}=e_h(x_0)\leq e_h(x_1)+C\epsilon\|I_he_h\|_{L^\infty(\Omega)}+\epsilon h\|\nabla (y_\gamma-I_hy_\gamma)\|_{L^\infty(\Omega)},
\end{equation*}
where $I_h: C(\bar{\Omega})\to V_h$ denotes the Lagrange interpolation operator. Remember that $\nabla y_\gamma$ and $\nabla y_{h,\gamma}\in L^\infty(\Omega)$ which implies the continuity of $e_h$ and thus, the Lagrange interpolant is well-defined. For the last term we apply a standard Lagrange interpolation error estimate which yields
\begin{equation*}
\|\nabla (y_\gamma-I_hy_\gamma)\|\leq Ch^{1-d/p}\|y_\gamma\|_{W^{2,p}(\Omega)}\leq Ch^{1-d/p}(1+\|u\|_{H^1(\Omega)})
\end{equation*}
independent of $\gamma$. The term involving $\epsilon$ can be compensated by the left-hand side by using the stability of $I_h$ in $L^\infty(\Omega)$ and choosing $\epsilon$ sufficiently small. Hence, it remains to estimate $e_h(x_1)$. Completely analogous to the last equation in the proof of \cite[Theorem~2.3]{Nochetto1988} we deduce
\begin{equation*}
e_h(x_1)=(u-\beta_\gamma(y_{h,\gamma}),G-G_h)\leq C(\|u\|_{L^p(\Omega)}+1)\|G-G_h\|_{L^{p'}(\Omega),}
\end{equation*}
where we used the uniform boundedness of $\beta_\gamma$ independent of $\gamma$ and its input. Lemma \ref{lemma_G} finally yields the desired estimate.
\end{proof}

\subsection{$\boldsymbol{L^\infty}$-Error Estimates for the Original Problem}

Before we turn to a priori error estimates in $L^\infty(\Omega)$, let us shortly address a convergence result in energy spaces 
needed for Lemma~\ref{lemma_strong_convergence}. 

\begin{lemma}\label{lem:convH1}
    Let $\{u_h\}_{h>0}$ be a sequence that converges strongly in $H^{-1}(\Omega)$ to $u \in H^{-1}(\Omega)$ as $h\searrow 0$.
    Denote the solution of the discretized VI in \eqref{disc_VI} associated with $h$ and $u_h$ by $y_h$ and the solution of 
    \eqref{VI} corresponding to $u$ by $y$. Then $y_h \to y$ in $H^1_0(\Omega)$ as $h\searrow 0$.
\end{lemma}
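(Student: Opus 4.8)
The plan is to run a Falk-type energy argument, adapted to the second-kind structure and to the fact that the right-hand sides vary with $h$. Writing $\phi(v):=\|v\|_{L^1(\Omega)}$, I would first record two basic facts. Testing \eqref{disc_VI} with the admissible choice $v_h=0$ and discarding the nonnegative term $\phi(y_h)$ yields the uniform bound $\|y_h\|_{H^1_0(\Omega)}\le c^{-1}\|u_h\|_{H^{-1}(\Omega)}$ with $c$ from \eqref{coercivity}; since $\{u_h\}$ is convergent and hence bounded, both $\{y_h\}$ and $\{u_h\}$ are bounded. The second ingredient is a recovery sequence $w_h\in V_h$ with $w_h\to y$ in $H^1_0(\Omega)$, whose existence is discussed below. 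The key observation enabling the argument is that, since $V_h\subset H^1_0(\Omega)$, the discrete iterate $y_h$ is itself admissible in the continuous inequality \eqref{VI}.

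Concretely, testing \eqref{VI} with $v=y_h$ bounds $a(y,y-y_h)$ from above by $\phi(y_h)-\phi(y)+\langle u,y-y_h\rangle$, while testing \eqref{disc_VI} with $v_h=w_h$ bounds $a(y_h,y_h-w_h)$ from above by $\phi(w_h)-\phi(y_h)+\langle u_h,y_h-w_h\rangle$. Adding these after splitting $a(y-y_h,y-y_h)=a(y,y-y_h)+a(y_h,y_h-y)$ and $a(y_h,y_h-y)=a(y_h,y_h-w_h)+a(y_h,w_h-y)$, the crucial point is that the nonsmooth contributions $\phi(y_h)$ cancel exactly; after regrouping the dual pairings one is left with
\begin{equation*}
\begin{aligned}
c\,\|y-y_h\|_{H^1_0(\Omega)}^2 &\le a(y-y_h,y-y_h)\\
&\le \big(\phi(w_h)-\phi(y)\big) + \langle u_h-u,\,y_h-y\rangle + \langle u_h,\,y-w_h\rangle + a(y_h,\,w_h-y).
\end{aligned}
\end{equation*}
It then remains to show that each of the four terms on the right tends to zero. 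The first is bounded by $\|w_h-y\|_{L^1(\Omega)}$ and vanishes because $w_h\to y$ in $H^1_0(\Omega)\hookrightarrow L^1(\Omega)$; the second is estimated by $\|u_h-u\|_{H^{-1}(\Omega)}\big(\|y_h\|_{H^1_0(\Omega)}+\|y\|_{H^1_0(\Omega)}\big)$ and vanishes by the strong $H^{-1}$-convergence of $\{u_h\}$ together with the uniform bound on $\{y_h\}$; the third and fourth are controlled by $\|u_h\|_{H^{-1}(\Omega)}\|y-w_h\|_{H^1_0(\Omega)}$ and $\|y_h\|_{H^1_0(\Omega)}\|w_h-y\|_{H^1_0(\Omega)}$, respectively, and again tend to zero since $\{u_h\}$ and $\{y_h\}$ are bounded and $w_h\to y$.

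The main obstacle I anticipate is the construction of the recovery sequence $w_h$. Since $u\in H^{-1}(\Omega)$ only guarantees $y=S(u)\in H^1_0(\Omega)$, the state need not be continuous for $d\ge 2$, so nodal (Lagrange) interpolation of $y$ is unavailable and must be replaced. I would circumvent this by invoking the density of $\bigcup_{h>0}V_h$ in $H^1_0(\Omega)$ for the quasi-uniform family $\{\mathcal{T}_h\}$; concretely, a Scott–Zhang (or Clément) quasi-interpolation $w_h:=\Pi_h y$ satisfies $\|y-\Pi_h y\|_{H^1_0(\Omega)}\to 0$ for every $y\in H^1_0(\Omega)$ without any extra regularity. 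The only genuinely new feature compared with the classical second-kind Falk lemma for fixed data is the consistency term $\langle u_h-u,\,y_h-y\rangle$ generated by the varying right-hand sides, and this is exactly where the strong $H^{-1}$-convergence of $\{u_h\}$ enters. Collecting the four limits and dividing by $c>0$ gives $\|y-y_h\|_{H^1_0(\Omega)}\to 0$ as $h\searrow 0$, which is the claim.
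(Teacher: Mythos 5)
Your proposal is correct and follows essentially the same Falk-type argument as the paper: testing \eqref{VI} with $y_h$, testing \eqref{disc_VI} with an approximation of $y$ (the paper keeps $v_h\in V_h$ arbitrary and takes an infimum, you insert a quasi-interpolant $w_h$ directly), exploiting the exact cancellation of the $\|y_h\|_{L^1(\Omega)}$ terms, the uniform bound from testing \eqref{disc_VI} with zero, and the density of $\bigcup_{h>0}V_h$ in $H^1_0(\Omega)$. The resulting four-term error decomposition, including the consistency term $\langle u_h-u,\,y_h-y\rangle$ handled via the strong $H^{-1}(\Omega)$-convergence, coincides with the paper's estimate.
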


\begin{proof}
    Let $v_h \in V_h$ be fixed, but arbitrary. We test \eqref{VI} with $y_h$ and \eqref{disc_VI} with $v_h$ and add the arising 
    inequalities to obtain
    \begin{equation*}
    \begin{aligned}
        a(y - y_h, y - y_h)
        &\leq a(y_h, v_h - y) + \|v_h\|_{L^1(\Omega)} - \|y\|_{L^1(\Omega)} 
        + \dual{u - u_h}{y-y_h} + \dual{u_h}{y - v_h}\\
        &\leq C\big( \|y_h\|_{H^1(\Omega)} +1 + \|u_h\|_{H^{-1}(\Omega)}\big) \|v_h - y\|_{H^1(\Omega)}\\
        &\quad + \big(\|y\|_{H^1(\Omega)} + \|y_h\|_{H^1(\Omega)}\big) \|u - u_h\|_{H^{-1}(\Omega)}.
    \end{aligned}
    \end{equation*}
    By testing \eqref{disc_VI} with zero we see that $\|y_h\|_{H^1(\Omega)}$ is bounded by a constant independent of $h$.
    Since $v_h$ was arbitrary, the coercivity of the bilinear form  implies
    \begin{equation*}
        \|y-y_h\|_{H^1(\Omega)}^2 \leq C \Big(\|u - u_h\|_{H^{-1}(\Omega)} + \inf_{v_h \in V_h} \|v_h - y\|_{H^1(\Omega)}\Big).
    \end{equation*}
    The convergence of $u_h$ by assumption and the density of $\bigcup_{h>0}V_h$ in $H^1_0(\Omega)$ then implies the result.
\end{proof}

Applying the results of the previous subsections we are able to derive the following discretization error for the states.
\begin{theorem}\label{theo_Nochetto2}
If $u\in L^p(\Omega)$, $p\in (1,\infty)$, then there exists a constant $C>0$ such that
\begin{equation*}
\|y-y_h\|_{L^\infty(\Omega)}\leq C|\log(h)|h^{2-d/p}(\|u\|_{L^p(\Omega)}+1).
\end{equation*}
If $u\in L^\infty(\Omega)$, then there exists a constant $C>0$ such that
\begin{equation*}
\|y-y_h\|_{L^\infty(\Omega)}\leq C(h|\log(h)|)^2(\|u\|_{L^\infty(\Omega)}+1).
\end{equation*}
$C$ is independent of $h$.
\end{theorem}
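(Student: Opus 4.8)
The plan is to introduce the regularized state $y_\gamma$ solving \eqref{eq:pdegamma} and the discrete regularized state $y_{h,\gamma}$ solving \eqref{reg_disc_VI}, both with the fixed datum $u$, and to insert them via the triangle inequality
\[
\|y-y_h\|_{L^\infty(\Omega)} \leq \|y-y_\gamma\|_{L^\infty(\Omega)} + \|y_\gamma-y_{h,\gamma}\|_{L^\infty(\Omega)} + \|y_{h,\gamma}-y_h\|_{L^\infty(\Omega)}.
\]
Since the left-hand side does not depend on $\gamma$, the idea is to keep $h$ fixed and let $\gamma\to\infty$, so that only the middle term survives while the two regularization errors vanish.

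For the first term I would invoke Theorem~\ref{theo_reg_error}, which yields $\|y-y_\gamma\|_{L^\infty(\Omega)}\leq C\gamma^{-2/(3(2+s))}\to 0$; note that this rate is irrelevant for the final estimate because the term disappears in the limit. For the third term, Proposition~\ref{prop:properties_reg_disc_VI} gives $y_{h,\gamma}\to y_h$ in $H^1_0(\Omega)$, and since $V_h$ is finite-dimensional for fixed $h$, all norms on $V_h$ are equivalent, so $\|y_{h,\gamma}-y_h\|_{L^\infty(\Omega)}\to 0$ as well. The middle term is controlled, uniformly in $\gamma$, by Theorem~\ref{theo_disc_reg_error}, namely $\|y_\gamma-y_{h,\gamma}\|_{L^\infty(\Omega)}\leq C h^{2-d/p}|\log h|(\|u\|_{L^p(\Omega)}+1)$. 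Taking $\limsup_{\gamma\to\infty}$ on the right-hand side therefore proves the first claim for $u\in L^p(\Omega)$, $p\in(1,\infty)$.

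For the case $u\in L^\infty(\Omega)$ I would exploit that $u\in L^p(\Omega)$ for every finite $p$ with $\|u\|_{L^p(\Omega)}\leq|\Omega|^{1/p}\|u\|_{L^\infty(\Omega)}$, and optimize over $p$. The crucial point is to make the $p$-dependence of the constant explicit: the elliptic (Calder\'on--Zygmund) regularity constant entering Lemma~\ref{lemma_G}, and hence Theorem~\ref{theo_disc_reg_error}, grows like $O(p)$ as $p\to\infty$, so the bound reads $\|y-y_h\|_{L^\infty(\Omega)}\leq C\,p\,h^{2-d/p}|\log h|(\|u\|_{L^\infty(\Omega)}+1)$ with $C$ independent of $p$ and $h$. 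Choosing $p=|\log h|$ (admissible for $h$ small) gives $h^{2-d/p}=h^2\exp(d)\leq Ch^2$ and $|\Omega|^{1/p}\leq C$, while the two factors $p=|\log h|$ and the explicit $|\log h|$ combine to $|\log h|^2$. This yields $\|y-y_h\|_{L^\infty(\Omega)}\leq C(h|\log h|)^2(\|u\|_{L^\infty(\Omega)}+1)$, as claimed.

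The main obstacle will be the bookkeeping in the $L^\infty$ case: one must re-examine the proofs of Lemma~\ref{lemma_G} and Theorem~\ref{theo_disc_reg_error} to verify that the only source of $p$-blow-up is the linear growth of the $W^{2,p}$-regularity constant, and that the Schatz--Wahlbin and interpolation constants stay bounded; only then is the choice $p\sim|\log h|$ legitimate. A secondary technical point is the uniformity in $\gamma$ of the middle estimate (which holds since the bound in Theorem~\ref{theo_disc_reg_error} is $\gamma$-independent) together with the qualitative $L^\infty$-convergence $y_{h,\gamma}\to y_h$, for which the finite dimension of $V_h$ at fixed $h$ is essential.
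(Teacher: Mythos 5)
Your proof is correct and follows essentially the same route as the paper: the identical three-term splitting via $y_\gamma$ and $y_{h,\gamma}$, with Theorem~\ref{theo_reg_error}, Theorem~\ref{theo_disc_reg_error} and the norm equivalence on the finite-dimensional space $V_h$ (via Proposition~\ref{prop:properties_reg_disc_VI}) controlling the three terms. The only differences are cosmetic and in your favor: you let $\gamma\to\infty$ at fixed $h$ (exploiting that the left-hand side is $\gamma$-independent) instead of the paper's explicit coupling $\gamma = Ch^{-3(2+s)}$, and you spell out the choice $p\sim|\log h|$ together with the $O(p)$ growth of the Calder\'on--Zygmund constant that produces the $(h|\log h|)^2$ bound in the $L^\infty$ case, a step the paper leaves implicit.
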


\begin{proof}
We consider the following error splitting
\begin{equation}\label{eq_error_splitting}
\|y-y_{h}\|_{L^\infty(\Omega)}\leq \|y-y_\gamma\|_{L^\infty(\Omega)}+\|y_\gamma-y_{h,\gamma}\|_{L^\infty(\Omega)} + \|y_{h,\gamma}-y_{h}\|_{L^\infty(\Omega)}
\end{equation}

and observe that $y_h$, $y_{h,\gamma}\in V_h$, which is a finite-dimensional space. Since for fixed $h$ all norms are equivalent in $V_h$ the strong convergence $y_{h,\gamma}\to y_h$ in $H_0^1(\Omega)$ (cf. Proposition \ref{prop:properties_reg_disc_VI}) implies
\begin{equation}\label{strong_Linf_conv}
y_{h,\gamma}\to y_h\quad \text{in}\ L^\infty(\Omega)\ \text{as}\ \gamma\to \infty.
\end{equation}
The choice $\gamma = Ch^{-3(2+s)}$ with $s>\max\{d/2, 1\}$ in Theorem \ref{theo_reg_error} and Theorem \ref{theo_disc_reg_error}  complete the proof.

\end{proof}

\section{Regularized Optimal Control  Problem}\label{sec:Reg_OCP}

Let a locally optimal solution $\bar u \in L^2(\Omega)$ of \eqref{prob_P} be given. Then
we consider the following regularized optimal control problem:

\begin{equation}\label{prob_P_gamma}\tag{P$_\gamma$}
	\left.\begin{aligned}
	& 
	\min_{(y_\gamma, u_\gamma) \in H_0^1(\Omega)\times L^2(\Omega)} J(y_\gamma,u_\gamma)
	+ \|u_\gamma - \bar u\|_{L^2(\Omega)}^2,\\
	& \text{s.t.}\quad a(y_\gamma, v)+\frac{2}{\pi}\int_\Omega\arctan(\gamma y_\gamma) v\ \mathrm dx =\langle u_\gamma, v\rangle\quad \forall v\in H_0^1(\Omega).
	\end{aligned}
	\right\}
\end{equation}
The additional penalty term involving $\bar u$ will ensure the convergence to the given local solution 
as $\gamma \to \infty$, a technique that goes back to \cite{Barbu1984}, cf.\ also \cite{MignotPuel1984}.

Let us introduce the control-to-state operator $S_\gamma: L^2(\Omega)\to H_0^1(\Omega)$ which assigns $u_\gamma\in L^2(\Omega)$ to the solution $y_\gamma\in H_0^1(\Omega)$ of the PDE in \eqref{prob_P_gamma}
(which exists due to the monotonicity of $\arctan$).
As a direct consequence of the Fr\'echet differentiability of $\arctan$ from $H^1_0(\Omega)$ to $L^2(\Omega)$ and 
the implicit function theorem we obtain the following result:

\begin{lemma}\label{lemma_diff_S_gamma}
$S_\gamma$ is continuously Fr\'{e}chet differentiable from $L^2(\Omega)$ to $H_0^1(\Omega)$ and its derivative $w=S_\gamma'(u_\gamma)h$ at $u_\gamma \in L^2(\Omega)$ in direction $h\in L^2(\Omega)$
is given by the solution of the linearized equation
\begin{equation}\label{der_S_gamma}
a(w,v) + \int_\Omega\frac{2\gamma}{\pi(1+\gamma^2y_\gamma^2)}wv\ \mathrm dx= (h,v)\quad \forall v\in H_0^1(\Omega),
\end{equation}
where $y_\gamma = S_\gamma(u_\gamma)$.
\end{lemma}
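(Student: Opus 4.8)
The plan is to apply the implicit function theorem to the map
\[
E\colon L^2(\Omega)\times H_0^1(\Omega)\to H^{-1}(\Omega),\qquad
\dual{E(u,y)}{v} := a(y,v)+\frac{2}{\pi}\int_\Omega \arctan(\gamma y)\,v\di x-\dual{u}{v},
\]
so that $E(u,S_\gamma(u))=0$ encodes the state equation in \eqref{prob_P_gamma}. I would first verify that $E$ is continuously Fréchet differentiable and that the partial derivative $\partial_y E(u,y)$ is a linear homeomorphism from $H_0^1(\Omega)$ onto $H^{-1}(\Omega)$; the theorem then yields a locally unique $C^1$ solution map $u\mapsto y$, which coincides with $S_\gamma$ by the global unique solvability guaranteed through the monotonicity of $\arctan$. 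The affine parts $y\mapsto a(y,\cdot)$ and $u\mapsto -\dual{u}{\cdot}$ are bounded linear, hence smooth, so the only nontrivial ingredient is the superposition (Nemytskii) term.

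Second, I would establish that $\Phi\colon H_0^1(\Omega)\to L^2(\Omega)$, $\Phi(y):=\tfrac{2}{\pi}\arctan(\gamma y)$, is of class $C^1$ with derivative $\Phi'(y)h=\tfrac{2\gamma}{\pi(1+\gamma^2 y^2)}\,h$. Since $s\mapsto\arctan(\gamma s)$ has uniformly bounded second derivative, the pointwise Taylor estimate
\[
\Big|\arctan(\gamma(y+h))-\arctan(\gamma y)-\frac{\gamma}{1+\gamma^2 y^2}\,h\Big|\le C_\gamma\,h^2
\]
holds a.e., whence the $L^2$-norm of the remainder is bounded by $C_\gamma\|h\|_{L^4(\Omega)}^2$. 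The crucial point is the gain of integrability supplied by the embedding $H_0^1(\Omega)\hookrightarrow L^4(\Omega)$ (valid for $d\le 4$, in particular for $d\in\{1,2,3\}$), which turns this into $o(\|h\|_{H^1(\Omega)})$ and thus proves Fréchet differentiability. Continuity of $y\mapsto \Phi'(y)$ in $\mathcal{L}(H_0^1(\Omega),L^2(\Omega))$ follows from the Lipschitz continuity of $s\mapsto \tfrac{2\gamma}{\pi(1+\gamma^2 s^2)}$ together with the same embedding, estimating $\|(\Phi'(y_1)-\Phi'(y_2))h\|_{L^2(\Omega)}$ by $C\|y_1-y_2\|_{L^4(\Omega)}\|h\|_{L^4(\Omega)}$.

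Third, the partial derivative reads $\dual{\partial_y E(u,y)w}{v}=a(w,v)+\int_\Omega \tfrac{2\gamma}{\pi(1+\gamma^2 y^2)}\,wv\di x$, which is the bilinear form associated with a bounded, coercive operator: boundedness is clear, and coercivity follows since $a$ is coercive with constant $c$ as in \eqref{coercivity} while the reaction coefficient $\tfrac{2\gamma}{\pi(1+\gamma^2 y^2)}$ is nonnegative. By Lax--Milgram, $\partial_y E(u,y)$ is therefore boundedly invertible, and moreover $\partial_u E(u,y)h=-h$. The implicit function theorem then gives that $S_\gamma$ is $C^1$ and that $w=S_\gamma'(u_\gamma)h$ solves $\partial_y E(u_\gamma,y_\gamma)w=h$, which is precisely the linearized equation \eqref{der_S_gamma}. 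I expect the main obstacle to be exactly the Nemytskii differentiability in the second step: for a superposition operator between $L^p$-type spaces such a result is false without extra integrability, and it is only the Sobolev embedding $H_0^1(\Omega)\hookrightarrow L^4(\Omega)$, used to control the quadratic remainder, that makes the argument work.
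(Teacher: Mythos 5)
Your proposal is correct and follows exactly the route the paper indicates: the paper states the lemma as a direct consequence of the Fr\'echet differentiability of the $\arctan$-Nemytskii operator from $H_0^1(\Omega)$ to $L^2(\Omega)$ together with the implicit function theorem, which is precisely your argument, only with the details (Taylor remainder via $H_0^1(\Omega)\hookrightarrow L^4(\Omega)$, Lax--Milgram invertibility of the linearized operator) written out in full. In particular, your identification of the Sobolev embedding as the ingredient that rescues the Nemytskii differentiability, which would fail between pure $L^p$-spaces, is exactly the point the paper's one-line justification implicitly relies on.
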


The next theorem covers the approximation of $\bar u$ by locally optimal solutions of \eqref{prob_P_gamma}:

\begin{theorem}\label{theo_strong_conv_P_gamma}
There exists a sequence of local solutions to \eqref{prob_P_gamma}, that converges strongly to $\bar u$ as $\gamma \to \infty$.
\end{theorem}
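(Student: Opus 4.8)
The plan is to follow the penalization-and-localization technique going back to \cite{Barbu1984, MignotPuel1984} in combination with the standard localization argument of \cite{CasasTroeltzsch2002}. Since $\bar u$ is a \emph{local} solution of \eqref{prob_P}, there is a radius $\rho>0$ such that $f(\bar u)\le f(u)$ for all $u$ in the closed $L^2$-ball $\bar B_\rho(\bar u)$. I would first consider the localized regularized problems
\begin{equation*}
    \min_{u \in \bar B_\rho(\bar u)} \; J(S_\gamma(u), u) + \|u - \bar u\|_{L^2(\Omega)}^2
\end{equation*}
and show that each admits a global minimizer $\bar u_\gamma$. For this I would use that $\bar B_\rho(\bar u)$ is weakly sequentially compact in $L^2(\Omega)$ and that the reduced objective is weakly lower semicontinuous: if $u_n \rightharpoonup u$ in $L^2(\Omega)$, then the compact embedding $L^2(\Omega) \hookrightarrow H^{-1}(\Omega)$ yields $u_n \to u$ in $H^{-1}(\Omega)$, and the Lipschitz continuity of $S_\gamma$ from $H^{-1}(\Omega)$ to $H^1_0(\Omega)$ for fixed $\gamma$ (which follows from the monotonicity of $\arctan$ and the coercivity \eqref{coercivity}, with constant $1/c$ independent of $\gamma$, in analogy to Lemma~\ref{lemma_Lipschitz}) gives $S_\gamma(u_n)\to S_\gamma(u)$ in $H^1_0(\Omega)$. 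The state-tracking term is thus continuous along weakly convergent sequences, while the two control terms are convex and continuous, hence weakly lower semicontinuous.

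Next I would extract the limit. Testing optimality of $\bar u_\gamma$ against the feasible point $\bar u$ (for which the penalty vanishes) gives
\begin{equation*}
    J(S_\gamma(\bar u_\gamma), \bar u_\gamma) + \|\bar u_\gamma - \bar u\|_{L^2(\Omega)}^2 \le J(S_\gamma(\bar u), \bar u),
\end{equation*}
and by Theorem~\ref{theo_conv_to_original_sol} applied to the constant sequence the right-hand side converges to $J(S(\bar u),\bar u) = f(\bar u)$. Since $\{\bar u_\gamma\}$ lies in $\bar B_\rho(\bar u)$ it is bounded, so I may pass to a subsequence with $\bar u_\gamma \rightharpoonup u^\ast$ in $L^2(\Omega)$; the compact embedding again yields $\bar u_\gamma \to u^\ast$ in $H^{-1}(\Omega)$, and Theorem~\ref{theo_conv_to_original_sol} then gives $S_\gamma(\bar u_\gamma) \to S(u^\ast)$ in $H^1_0(\Omega)$. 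This is the step where the $\gamma\to\infty$ convergence of the regularized states is indispensable, since it is what permits passage to the limit in the nonsmooth state relation despite $u$ and $\gamma$ varying simultaneously.

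Finally I would identify $u^\ast$ and upgrade to strong convergence. Passing to the $\liminf$ in the optimality inequality, using strong $L^2$-convergence of the states for the tracking term and weak lower semicontinuity for the two control terms, gives
\begin{equation*}
    J(S(u^\ast), u^\ast) + \|u^\ast - \bar u\|_{L^2(\Omega)}^2 \le f(\bar u).
\end{equation*}
Since $u^\ast \in \bar B_\rho(\bar u)$, local optimality of $\bar u$ yields $f(\bar u)\le f(u^\ast)=J(S(u^\ast),u^\ast)$, and combining the two inequalities forces $u^\ast = \bar u$. The main point — and the reason for the penalty term — is that the same chain also delivers norm convergence: rewriting the optimality bound and splitting off the (convergent) state-tracking term, one obtains
\begin{equation*}
    \limsup_{\gamma\to\infty} \Big(\tfrac{\nu}{2}\|\bar u_\gamma - u_d\|_{L^2(\Omega)}^2 + \|\bar u_\gamma - \bar u\|_{L^2(\Omega)}^2\Big) \le \tfrac{\nu}{2}\|\bar u - u_d\|_{L^2(\Omega)}^2,
\end{equation*}
while weak lower semicontinuity of the control-tracking term bounds the right-hand side below by $\liminf_\gamma \tfrac{\nu}{2}\|\bar u_\gamma - u_d\|_{L^2(\Omega)}^2$; since the penalty term is nonnegative, this squeezes $\|\bar u_\gamma - \bar u\|_{L^2(\Omega)} \to 0$. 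As the weak limit is always $\bar u$, a subsequence-subsequence argument shows that the whole sequence converges strongly. Strong convergence then places $\bar u_\gamma$ in the interior of $\bar B_\rho(\bar u)$ for all large $\gamma$, so the constraint is inactive and each such $\bar u_\gamma$ is a local solution of the unconstrained problem \eqref{prob_P_gamma}, which is the claim. I expect this final squeezing argument, in concert with the limit passage of Theorem~\ref{theo_conv_to_original_sol}, to be the only nonroutine part.
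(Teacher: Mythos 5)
Your proof is correct and follows essentially the same route as the paper: the penalization-plus-localization scheme of \cite{Barbu1984, CasasTroeltzsch2002}, with the limit passage in the state relation handled by Theorem~\ref{theo_conv_to_original_sol}, and the penalty term forcing both the identification $u^\ast=\bar u$ and the upgrade to strong convergence. The only difference is that you carry out in detail the steps the paper delegates to \cite[Theorems~3.4, 3.5]{DelosReyes2011} and to the remark that $\bar u$ is an isolated minimizer of the penalized limit problem, which your squeezing argument reproduces explicitly.
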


\begin{proof}
Since the proof combines well-known arguments, we only sketch it. 
Although our regularization differs from the one in 
\cite{DelosReyes2011}, our findings from Appendix~\ref{sec_reg_VI}, in particular Theorem~\ref{theo_conv_to_original_sol}, 
allow to readily transfer 
the proofs of \cite[Theorem~3.4 resp.~3.5]{DelosReyes2011} to our setting.
These theorems cover the convergence of global minimizers, to be more precise, for every $\gamma > 0$ 
there is a (globally optimal) solution of \eqref{prob_P_gamma} and, as $\gamma\to \infty$ a 
subsequence of these converges to a global minimizer of \eqref{prob_P} with the objective functional replaced 
by the one in \eqref{prob_P_gamma}, i.e. 
\begin{equation}\label{eq:Ppenalty}
    \min_{u \in L^2(\Omega)} \; J(S(u), u) + \|u - \bar u\|_{L^2(\Omega)}^2,
\end{equation}
where $S$ denotes again the solution operator of the VI in \eqref{VI} from Lemma~\ref{lemma_Lipschitz}.
Moreover, a classical localization argument (cf.\ e.g.\ \cite{CasasTroeltzsch2002})
implies that the global convergence theory carries over to isolated local minimizers.
However, due to the additional penalty term in \eqref{eq:Ppenalty}, it is easy to see that $\bar u$ is an isolated 
local minimizer of \eqref{eq:Ppenalty}, which implies the result.
\end{proof}

\begin{theorem}\label{theo_reg_opt_system}
If $(\bar{y}_\gamma, \bar{u}_\gamma)$ is a locally optimal solution of the regularized problem \eqref{prob_P_gamma}, then there exist $\bar{p}_\gamma\in H_0^1(\Omega)$ and $\bar\mu_\gamma \in L^2(\Omega)$ so that the following optimality system is satisfied:
\begin{subequations}
\begin{align}
&a(\bar{y}_\gamma, v)+\frac{2}{\pi}(\arctan(\gamma\bar{y}_\gamma),v) =\langle\bar{u}_\gamma, v\rangle\quad\forall v\in H_0^1(\Omega)\label{reg_opt_system_1}\\
&a(\bar{p}_\gamma, v)+(\bar{\mu}_\gamma, v)= (\bar{y}_\gamma -y_d,v)\quad\forall v\in H_0^1(\Omega)\label{reg_opt_system_2}\\
& \bar{\mu}_\gamma =\frac{2\gamma}{\pi(1+\gamma^2 \bar{y}_\gamma^2)}\bar{p}_\gamma
\quad \text{a.e.\ in }\Omega \nonumber\\
&(\bar{\mu}_\gamma,\bar{p}_\gamma)\geq 0\label{reg_opt_system_3}\\
& \bar p_\gamma + \nu (\bar{u}_\gamma-u_d)  + 2(\bar u_\gamma - \bar u) = 0 \quad \text{a.e.\ in }\Omega \label{reg_opt_system_4}
\end{align}
\end{subequations}
\end{theorem}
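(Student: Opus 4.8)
The plan is to reduce the constrained problem \eqref{prob_P_gamma} to an unconstrained one for the reduced objective and then apply the standard adjoint calculus. Since Lemma~\ref{lemma_diff_S_gamma} guarantees that $S_\gamma$ is continuously Fr\'echet differentiable from $L^2(\Omega)$ to $H_0^1(\Omega)$, the reduced functional
\[
   \hat J_\gamma(u) := J(S_\gamma(u),u) + \|u-\bar u\|_{L^2(\Omega)}^2
\]
is Fr\'echet differentiable, and local optimality of $\bar u_\gamma$ yields the first-order condition $\hat J_\gamma'(\bar u_\gamma)\,h = 0$ for all $h\in L^2(\Omega)$. Writing $w=S_\gamma'(\bar u_\gamma)h$ and applying the chain rule, this reads
\[
   (\bar y_\gamma - y_d, w) + \nu(\bar u_\gamma - u_d, h) + 2(\bar u_\gamma - \bar u, h) = 0 .
\]
Equation \eqref{reg_opt_system_1} then holds simply by definition of $\bar y_\gamma = S_\gamma(\bar u_\gamma)$.

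Next I would introduce the adjoint state $\bar p_\gamma\in H_0^1(\Omega)$ as the unique solution of
\[
   a(\bar p_\gamma, v) + \int_\Omega \frac{2\gamma}{\pi(1+\gamma^2\bar y_\gamma^2)}\,\bar p_\gamma\, v\,\di x = (\bar y_\gamma - y_d, v)\quad\forall v\in H_0^1(\Omega).
\]
Well-posedness follows from the Lemma of Lax--Milgram: the bilinear form $a$ is coercive by \eqref{coercivity}, while the additional zeroth-order term is non-negative and bounded by $2\gamma/\pi$, so the full form remains continuous and coercive on $H_0^1(\Omega)$. Setting $\bar\mu_\gamma := \tfrac{2\gamma}{\pi(1+\gamma^2\bar y_\gamma^2)}\,\bar p_\gamma$, the boundedness of the coefficient together with $\bar p_\gamma\in H_0^1(\Omega)\hookrightarrow L^2(\Omega)$ gives $\bar\mu_\gamma\in L^2(\Omega)$, and the adjoint equation takes precisely the form \eqref{reg_opt_system_2}. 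The sign condition \eqref{reg_opt_system_3} is then immediate, since $(\bar\mu_\gamma,\bar p_\gamma)=\int_\Omega \tfrac{2\gamma}{\pi(1+\gamma^2\bar y_\gamma^2)}\,\bar p_\gamma^2\,\di x\geq 0$.

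The remaining gradient equation \eqref{reg_opt_system_4} follows by the usual elimination of the linearized state. I would test the linearized equation from Lemma~\ref{lemma_diff_S_gamma} with $v=\bar p_\gamma$ and the adjoint equation with $v=w$. Since $a$ is symmetric and the zeroth-order multiplication operator is symmetric as well, the two mixed terms cancel and one obtains $(\bar y_\gamma - y_d, w) = (\bar p_\gamma, h)$. Substituting this into the first-order condition turns it into
\[
   \big(\bar p_\gamma + \nu(\bar u_\gamma - u_d) + 2(\bar u_\gamma - \bar u),\, h\big) = 0 \quad \forall\, h\in L^2(\Omega),
\]
whence \eqref{reg_opt_system_4} follows by the fundamental lemma of the calculus of variations.

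Since all ingredients are standard, I do not expect a genuine obstacle. The only points requiring a little care are the coercivity of the adjoint form, which hinges on the non-negativity of the regularized coefficient $\tfrac{2\gamma}{\pi(1+\gamma^2\bar y_\gamma^2)}$, and the observation that the self-adjointness of the linearized operator is exactly what makes the test-and-cancel step work cleanly, so that $\bar p_\gamma$ genuinely represents the reduced gradient.
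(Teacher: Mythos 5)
Your proposal is correct and follows essentially the same route as the paper: reduce via the Fr\'echet differentiable operator $S_\gamma$ (Lemma~\ref{lemma_diff_S_gamma}), set the derivative of the reduced functional to zero, define $\bar p_\gamma$ through the adjoint equation (Lax--Milgram, using non-negativity of the coefficient $\tfrac{2\gamma}{\pi(1+\gamma^2\bar y_\gamma^2)}$), and eliminate the linearized state by the test-and-cancel step to obtain the gradient equation \eqref{reg_opt_system_4}. Your write-up is merely slightly more explicit than the paper's (spelling out the coercivity of the adjoint form and the sign condition \eqref{reg_opt_system_3}), but the argument is the same.
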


\begin{proof}
We introduce the reduced functional $f_\gamma: L^2(\Omega)\to \R$, which is defined by
\begin{equation*}
f_\gamma(u_\gamma)=J(S_\gamma(u_\gamma),u_\gamma) +\|u_\gamma- \bar u\|_{L^2(\Omega)}^2.
\end{equation*}
Due to the local optimality of $\bar u_\gamma$ and the differentiability of $S_\gamma$ we obtain
\begin{equation*}
f_\gamma'(\bar{u}_\gamma)h=0\quad\forall h\in L^2(\Omega).
\end{equation*}
Now let $h\in L^2(\Omega)$ be arbitrary and define $w = S_\gamma'(\bar u_\gamma)h$.
Introducing $\bar{p}_\gamma$ as the unique solution of the adjoint equation \eqref{reg_opt_system_2} (which is clearly uniquely solvable by the Lax-Milgram lemma) and using the definition of $\bar{\mu}_\gamma$ we arrive at
\begin{align*}
f_\gamma'(\bar{u}_\gamma)h&=(\bar{y}_\gamma-y_d, w)+(\nu(\bar{u}_\gamma-u_d) + 2(\bar u_\gamma - \bar u), h)\\
&= a(\bar{p}_\gamma, w) + (\bar{\mu}_\gamma, w) + (\nu(\bar{u}_\gamma-u_d) + 2(\bar u_\gamma - \bar u),h)\\
&=a(\bar{p}_\gamma, w)+\left(\frac{2\gamma}{\pi(1+\gamma^2\bar{y}_\gamma^2)}\bar{p}_\gamma,w\right)
+(\nu (\bar{u}_\gamma -u_d) + 2(\bar u_\gamma - \bar u),h).
\end{align*}
Hence, by Lemma \ref{lemma_diff_S_gamma} it follows that 
$(\bar{p}_\gamma+\nu (\bar{u}_\gamma - u_d) + 2(\bar u_\gamma - \bar u), h)=0$
and, since $h$ was arbitrary, this implies the result.
\end{proof}

\begin{lemma}\label{lemma_pgamma_bounded}
Consider the sequence of local solutions from Theorem~\ref{theo_strong_conv_P_gamma} and denote the associated
sequence of adjoint states from Theorem \ref{theo_reg_opt_system} by $\{ \bar p_\gamma\}_{\gamma > 0}$.
Then $\{\bar{p}_\gamma\}$ is bounded in $H_0^1(\Omega)$.
\end{lemma}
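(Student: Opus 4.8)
The plan is to obtain the uniform $H^1_0$-bound through the standard energy estimate for the adjoint equation, the essential point being that the multiplier term carries a favorable sign and can therefore be discarded. First I would test the adjoint equation \eqref{reg_opt_system_2} with $v = \bar p_\gamma$, which yields
\[
a(\bar p_\gamma, \bar p_\gamma) + (\bar\mu_\gamma, \bar p_\gamma) = (\bar y_\gamma - y_d, \bar p_\gamma).
\]
The decisive observation concerns the second term on the left: from the definition $\bar\mu_\gamma = \frac{2\gamma}{\pi(1+\gamma^2\bar y_\gamma^2)}\bar p_\gamma$ one reads off that $(\bar\mu_\gamma, \bar p_\gamma) = \int_\Omega \frac{2\gamma}{\pi(1+\gamma^2\bar y_\gamma^2)}\bar p_\gamma^2\,\di x \geq 0$, which is precisely the sign condition recorded in \eqref{reg_opt_system_3}. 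Dropping this nonnegative term and invoking the coercivity \eqref{coercivity} of $a$, followed by the Cauchy--Schwarz inequality on the right-hand side, I would arrive at
\[
c\,\|\bar p_\gamma\|_{H^1(\Omega)}^2 \leq (\bar y_\gamma - y_d, \bar p_\gamma) \leq \|\bar y_\gamma - y_d\|_{L^2(\Omega)}\,\|\bar p_\gamma\|_{H^1(\Omega)},
\]
and hence, after dividing by $\|\bar p_\gamma\|_{H^1(\Omega)}$,
\[
\|\bar p_\gamma\|_{H^1(\Omega)} \leq \frac{1}{c}\big(\|\bar y_\gamma\|_{L^2(\Omega)} + \|y_d\|_{L^2(\Omega)}\big).
\]

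It then remains to bound the right-hand side uniformly in $\gamma$, that is, to show that the associated states $\{\bar y_\gamma\}$ are bounded in $L^2(\Omega)$. This follows from the convergence results already at our disposal: by Theorem~\ref{theo_strong_conv_P_gamma} the controls satisfy $\bar u_\gamma \to \bar u$ in $L^2(\Omega)$, and hence also in $H^{-1}(\Omega)$ by the continuous embedding; applying Theorem~\ref{theo_conv_to_original_sol} with $u_\gamma = \bar u_\gamma$ then gives $\bar y_\gamma = S_\gamma(\bar u_\gamma) \to S(\bar u) = \bar y$ in $H^1_0(\Omega)$. In particular $\{\bar y_\gamma\}$ is bounded in $H^1_0(\Omega)$, hence in $L^2(\Omega)$, and since $y_d \in L^2(\Omega)$ the claimed uniform $H^1_0$-bound on $\{\bar p_\gamma\}$ follows.

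I do not expect any serious obstacle here; the whole argument is a routine energy estimate mirroring the discrete bound in \eqref{eq:boundphg}. The only ingredient that genuinely has to be used is the sign condition \eqref{reg_opt_system_3} on the multiplier, since without it the term $(\bar\mu_\gamma,\bar p_\gamma)$ could not simply be dropped; but this sign is built into the definition of $\bar\mu_\gamma$ and so requires no additional work.
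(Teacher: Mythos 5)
Your proof is correct, and its core is identical to the paper's: test \eqref{reg_opt_system_2} with $\bar p_\gamma$, discard the multiplier term using the sign condition \eqref{reg_opt_system_3}, and combine coercivity with the Cauchy--Schwarz inequality to obtain $\|\bar p_\gamma\|_{H^1(\Omega)} \leq \frac{1}{c}\big(\|\bar y_\gamma\|_{L^2(\Omega)} + \|y_d\|_{L^2(\Omega)}\big)$.

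The only point where you diverge is the boundedness of the states. The paper derives a $\gamma$-uniform a priori estimate $\|\bar y_\gamma\|_{H^1(\Omega)} \leq C\big(1 + \|\bar u_\gamma\|_{L^2(\Omega)}\big)$ directly from the state equation, using the boundedness of $\beta_\gamma$ independent of $\gamma$ together with the bootstrapping argument from Lemma~\ref{lemma_reg_VI_regularity}, and then concludes from the fact that the convergent sequence $\{\bar u_\gamma\}$ is bounded. You instead invoke Theorem~\ref{theo_conv_to_original_sol}: since $\bar u_\gamma \to \bar u$ in $L^2(\Omega)$ by Theorem~\ref{theo_strong_conv_P_gamma}, hence in $H^{-1}(\Omega)$, the regularized states converge strongly in $H^1_0(\Omega)$ and are therefore bounded. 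This is legitimate and free of circularity, because Theorem~\ref{theo_conv_to_original_sol} is established independently of the present lemma; the state equation in \eqref{prob_P_gamma} is exactly \eqref{eq:pdegamma}, so the theorem applies verbatim. The trade-off is minor: the paper's route is more elementary and gives an explicit bound purely in terms of the data, whereas yours reuses an existing convergence result and is slightly shorter, at the cost of heavier machinery than the step really requires.
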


\begin{proof}
Due to the coercivity of $a(\cdot, \cdot)$ we get by testing \eqref{reg_opt_system_2} with $\bar{p}_\gamma\in H_0^1(\Omega)$
\begin{align*}
c\|\bar{p}_\gamma\|_{H^1(\Omega)}^2&\leq a(\bar{p}_\gamma,\bar{p}_\gamma)
= -\langle\bar{\mu}_\gamma, \bar{p}_\gamma\rangle + (\bar{y}_\gamma -y_d, \bar{p}_\gamma)
\leq \left(\|\bar{y}_\gamma\|_{L^2(\Omega)} + \|y_d\|_{L^2(\Omega)}\right)\|\bar{p}_\gamma\|_{H^1(\Omega)},
\end{align*}
where we used \eqref{reg_opt_system_3}. Thus, we obtain
$\|\bar{p}_\gamma\|_{H^1(\Omega)}\leq\frac{1}{c}(\|\bar{y}_\gamma\|_{L^2(\Omega)} + \|y_d\|_{L^2(\Omega)})$.
Using the essential boundedness of $\beta_\gamma$ independent of $\gamma$ and a bootstrapping 
argument as in the proof of Lemma~\ref{lemma_reg_VI_regularity}, we see that 
$\|\bar y_\gamma\|_{H^1(\Omega)} \leq C (1 + \|\bar u_\gamma\|_{L^2(\Omega)})$ with a constant $C>0$ 
independent of $\gamma$. Since $\{\bar u_\gamma\}$ converges by assumption and is therefore bounded,  
this implies the claim.
\end{proof}

\end{appendices}

\bibliographystyle{plain}
\bibliography{biblio_VI.bib}{}
\end{document}